\newcommand\coarse{\bullet}
\newcommand\fine{\circ}
\renewcommand{\d}{\,{\rm d}} 
\DeclareMathOperator{\diam}{diam}
\newcommand{\const}[1]{C_{\text{\rm#1}}}
\newcommand{\set}[2]{\big\{#1\,:\,#2\big\}}
\newcommand{\dual}[3][]{#1\langle#2\,,\,#3#1\rangle}
\newcommand{\norm}[3][]{#1\|#2#1\|_{#3}}
\newcommand\refine{{\tt refine}}
\newcommand\C{\mathbb{C}}
\newcommand\N{\mathbb{N}}
\newcommand\R{\mathbb{R}}
\newcommand\T{\mathbb{T}}
\newcommand\MM{\mathcal M}
\newcommand\OO{{\mathcal O}}
\newcommand\TT{\mathcal T}
\numberwithin{equation}{section}
\numberwithin{figure}{section}
\newtheorem{theorem}{Theorem}[section]
\newtheorem{proposition}[theorem]{Proposition}
\newtheorem{corollary}[theorem]{Corollary}
\newtheorem{algorithm}[theorem]{Algorithm}
\newtheorem{remark}[theorem]{Remark}
\newcommand*\patchAmsMathEnvironmentForLineno[1]{%
  \expandafter\let\csname old#1\expandafter\endcsname\csname #1\endcsname
  \expandafter\let\csname oldend#1\expandafter\endcsname\csname end#1\endcsname
  \renewenvironment{#1}%
     {\linenomath\csname old#1\endcsname}%
     {\csname oldend#1\endcsname\endlinenomath}}%
\newcommand*\patchBothAmsMathEnvironmentsForLineno[1]{%
  \patchAmsMathEnvironmentForLineno{#1}%
  \patchAmsMathEnvironmentForLineno{#1*}}%
\newcommand{\LL}{\mathcal{L}}
\newcommand{\NN}{\mathcal{N}}
\newcommand{\nnu}{{\overline{\nu}}}
\newcommand{\PP}{\mathcal{P}}
\newcommand{\RR}{\mathcal{R}}
\newcommand{\UU}{\mathcal{U}}
\newcommand{\scal}{\alpha}
\title{Adaptive boundary element methods for \\ regularized combined field integral equations}
\author{Th\'eophile Chaumont-Frelet}
\address{Inria Univ. Lille and Laboratoire Paul Painlev\'e, 59655 Villeneuve-d’Ascq, France}
\email{theophile.chaumont@inria.fr}%
\author{Gregor Gantner}
\address{Institute for Numerical Simulation, University of Bonn, Friedrich-Hirzebruch-Allee~7, 53115 Bonn, Germany}
\email{gantner@ins-uni.bonn.de}
\keywords{exterior Helmholtz problem; combined field integral equations; boundary element methods; {\sl a posteriori} error estimation; convergence of adaptive algorithms}
\subjclass[2020]{35J05, 65N15, 65N30, 65N38, 65N50}
\begin{document}

\begin{abstract}
While the exterior Helmholtz problem with Dirichlet boundary conditions is always well-posed,
the associated standard boundary integral equations are not if the squared wavenumber agrees
with an eigenvalue of the interior Dirichlet problem. Combined field integral equations are not
affected by this spurious resonances but are essentially restricted to sufficiently
smooth boundaries. For general Lipschitz domains, the latter integral equations are applicable
through suitable regularization. Under fairly general assumptions on the regularizing operator, 
we propose {\sl a posteriori} computable error estimators for
corresponding Galerkin boundary element methods of arbitrary polynomial degree.
We show that adaptive mesh-refining algorithms steered by these local estimators
converge at optimal algebraic rate with respect to the number of underlying
boundary mesh elements.
In particular, we consider mixed formulations involving the inverse Laplace--Beltrami as
regularizing operator. Numerical examples highlight that in the vicinity of spurious
resonances the proposed adaptive algorithm is significantly more performant when applied to the
regularized combined field equation rather than the standard one.
\end{abstract}

\date{\today}
\maketitle

\section{Introduction}

\begin{subequations}
\label{eq:helmholtz_intro}

\subsection{Model problem}
The propagation of waves is of central importance in a large array of applications
in physics and engineering. Many situations of interest may be modeled assuming a
fixed frequency $k >0$, and considering the reflections of an incident field $u$
on a bounded obstacle $\Omega \subset \mathbb R^d$, $d\in\{2,3\}$, surrounded by an infinite
homogeneous medium $\Omega^{\rm ext}$. In this case, reflected waves can be modeled
by a complex-valued amplitude $U: \Omega^{\rm ext} \to \mathbb C$ satisfying
the Helmholtz equation
\begin{equation}
\Delta U + k^2 U = 0 \text{ in } \Omega^{\rm ext},
\end{equation}
the condition
\begin{equation}
U = u \text{ on } \Gamma
\end{equation}
on the compact boundary $\Gamma := \partial\Omega$ of the obstacle $\Omega$,
and the Sommerfeld radiation condition
\begin{equation}
\partial_r U-ikU = o(r^{(1-d)/2}) \text{ as } r \to \infty,
\end{equation}
where $r$ denotes the radial coordinate, and $\partial_r$ the radial derivative.
\end{subequations}

\subsection{Standard boundary element methods}
For general obstacles $\Omega$, the solution to~\eqref{eq:helmholtz_intro} cannot
be found analytically, and needs to be approximated numerically. In the setting
considered here, boundary element methods (BEM) are particularly attractive,
since they can be directly formulated on the boundary $\Gamma$ and naturally
handle the Sommerfeld radiation condition~\cite{mclean00,steinbach08,ss11}. 
In this work, we consider adaptive
BEM which, given an initial triangulation $\TT_0$ of the boundary, allows for an
automatically steered localized mesh refinement tailored to the solution.
This approach results in accurate and reliable 
approximations potentially requiring much less degrees of freedom than uniform meshes.

To employ a BEM discretization, a natural \emph{direct} approach is to consider the first-kind
integral equation
\begin{equation}
\label{eq:bie_first_kind}
V_k \phi = (K_k-1/2) u,
\end{equation}
where $V_k$ and $K_k$ are single- and double-layer operator for the Helmholtz equation
(see, e.g.,~\cite{mclean00,steinbach08,ss11} and 
Section~\ref{sec:bio} below), and the unknown $\phi$
is the Neumann trace (i.e., the normal derivative on $\Gamma$) of $U$.
The unknown $\phi$ lies in the Sobolev space $H^{-1/2}(\Gamma)$,
and~\eqref{eq:bie_first_kind} is understood in the corresponding dual space $H^{1/2}(\Gamma)$,
leading to a convenient variational framework
(see Section~\ref{sec:sobolev} for a precise definition of $H^{\pm1/2}(\Gamma)$).
Once $\phi$ (or an approximation thereof) is computed, the wave field may be evaluated
in $\Omega^{\rm ext}$ through the representation formula
\begin{align}\label{eq:representation_formula}
U = \widetilde K_k u-\widetilde V_k\phi,
\end{align}
where $\widetilde V_k$ and $\widetilde K_k$
are the single- and double-layer potential of the Helmholtz equation
(see again~\cite{mclean00,steinbach08,ss11} and Section~\ref{sec:bio}).

Similarly, an \emph{indirect} approach is given by the ansatz $U=\widetilde{V}_k\phi$ for some $\phi$ in $H^{-1/2}(\Gamma)$ (which is in general different from $\partial_\nu U$) with corresponding boundary integral equation 
\begin{align}\label{eq:bie_first_kind_indirect}
V_k\phi = u.
\end{align}
Both equations \eqref{eq:bie_first_kind} and \eqref{eq:bie_first_kind_indirect}
involve the single-layer operator $V_k$, and a corresponding adaptive BEM is
formulated and analyzed in~\cite{bbhp19}.

\subsection{Spurious resonances}
However, a significant issue arises when using the first-kind boundary integral
equations~\eqref{eq:bie_first_kind} and \eqref{eq:bie_first_kind_indirect}. 
Indeed, although the original Helmholtz problem in~\eqref{eq:helmholtz_intro}
is well-posed for arbitrary frequencies $k>0$, the operator $V_k$ fails to be
invertible if $k$ is a resonant frequency of $\Omega$ (i.e., if $k^2$ is
a Dirichlet eigenvalue of the Laplace operator on $\Omega$).
This phenomenon is called \emph{spurious resonance} in the literature.
Intuitively, it comes from the fact that~\eqref{eq:bie_first_kind} and
\eqref{eq:bie_first_kind_indirect} simultaneously encompass the exterior
and the interior Dirichlet problem (with corresponding integral equation 
 $V_k \phi = (K_k+1/2) u$ or $V_k\phi=u$). While the former are always well-posed,
the latter are not for resonant frequencies. Indeed, the (finite-dimensional) kernel
of $V_k$ is spanned by the Neumann traces of Dirichlet eigenfunctions with eigenvalue
$k^2$ in $\Omega$; see, e.g., \cite[Theorem~3.9.1]{ss11}.

This leads to many issues in the numerical discretization at
and around spurious resonances, such as ill-conditioning
of the resulting matrices,
slow convergence of iterative linear solvers, and loss of accuracy of the resulting approximation,
cf.\ the numerical experiments of, e.g., \cite{es07,meury07}. Closer to the present topic,
our numerical experiments in Section~\ref{sec:numerics} demonstrate that the convergence of
adaptive BEM is also severely impaired in the vicinity of spurious resonances.
These issues arise both for the direct formulation~\eqref{eq:bie_first_kind},
but also for the corresponding indirect formulation~\eqref{eq:bie_first_kind_indirect}.

\subsection{Combined field integral equations}
These shortcomings have motivated the development of alternative integral
equations to be used in place of~\eqref{eq:bie_first_kind} and~\eqref{eq:bie_first_kind_indirect}.
The seminal works~\cite{bw65,leis65,panich65} simultaneously proposed the indirect ansatz
$U = \widetilde V_k \phi + i \mu \widetilde K_k \phi$ with real user-defined parameter
$\mu\neq 0$ (which typically scales as $k^{-1}$) and corresponding integral equation
\begin{align}\label{eq:bie_combined_field_indirect}
V_k \phi + i \mu (K_k+1/2) \phi = u.
\end{align}
The key idea is that an impedance boundary condition appears for the interior problem,
which is thus always solvable. Similarly, \cite{bm71} proposed the direct approach
\begin{equation}
\label{eq:bie_combined_field}
V_k \phi + i\mu(K_k'+1/2)\phi = (K_k - 1/2)u - i\mu W_k u,
\end{equation}
where $K_k'$ is the adjoint double-layer operator, 
$W_k$ the hypersingular operator,
and the unknown $\phi$ is again the Neumann trace of $U$. It results from exploiting the
representation formula~\eqref{eq:representation_formula} for the sum
$U|_\Gamma + i\mu \partial_\nu U$. 
The equations~\eqref{eq:bie_combined_field_indirect} and~\eqref{eq:bie_combined_field} are
typically called \emph{combined field} integral equations. While this seems not to be widely appreciated in the literature,
this is indeed true for general \emph{Lipschitz} domains; see~\cite[Section~2]{cl07}. 
However, only if the boundary $\Gamma$ is smooth, the operators $K_k, K_k'$ are compact so that the combined integral operators are indeed coercive,
which allows for a feasible numerical discretization. 

In this work, we therefore consider \emph{regularized} combined field integral equations,
which were already introduced in~\cite{panich65} (for theoretical purposes though). 
This regularization consists in applying a regularizing
operator $M: H^{-1/2}(\Gamma) \to H^{1/2}(\Gamma)$ to restore the
variational setting of~\eqref{eq:bie_first_kind} and \eqref{eq:bie_first_kind_indirect}.
In other words, we consider the integral equations
\begin{align}\label{eq:bie_regularized_indirect}
V_k \phi + i M(K_k+1/2) \phi = u
\end{align}
and
\begin{equation}
\label{eq:bie_regularized}
V_k \phi + iM(K_k'+1/2)\phi = (K_k - 1/2)u - iM W_k u,
\end{equation}
where the multiplication by $\mu$ in~\eqref{eq:bie_combined_field_indirect}
and \eqref{eq:bie_combined_field} is replaced by an application of $M$ 

Different numerically feasible choices of $M$ have been suggested in the literature~\cite{bh05,bs06,es07,es08,hiptmair03}. 
All of them guarantee that~\eqref{eq:bie_regularized_indirect} and \eqref{eq:bie_regularized} are well-posed in the sense that the
operators $V_k + iM(K_k+1/2)$ and $V_k + i(K_k'+1/2)M$ are \emph{coercive} isomorphisms between $H^{-1/2}(\Gamma)$ and $H^{1/2}(\Gamma)$
for all $k > 0$.

\subsection{Contributions of the manuscript}
Our key contribution is to extend algorithms and analysis proposed
in~\cite{bbhp19} for adaptive BEM of the standard integral
equations~\eqref{eq:bie_first_kind} and \eqref{eq:bie_first_kind_indirect}
to the regularized combined field integral equations~\eqref{eq:bie_regularized_indirect}
and~\eqref{eq:bie_regularized}. Specifically, given an initial triangulation $\TT_0$,
we discretize~the latter equations with discontinuous piecewise polynomials of degree
$p \in \mathbb N_0$ on a sequence of iteratively refined meshes $\TT_\ell$.

For every possible refinement $\TT_\coarse$ of $\TT_0$, we introduce an
{\sl a posteriori} error estimator $\eta_\coarse$ which is reliable and
\emph{weakly} efficient, which is the state of the art for~\eqref{eq:bie_first_kind}
and~\eqref{eq:bie_first_kind_indirect}. This result is stated in
Theorem~\ref{thm:estimator_indirect}. We then combine this error estimator
with the usual D\"orfler marking to an adaptive algorithm that produces
a sequence of refined meshes $\TT_\ell$ and associated approximations $\phi_\ell$.
We show that, if the initial triangulation is sufficiently fine, then the sequence
$\phi_\ell$ converges at optimal rate with respect to the number of mesh elements
$\# \TT_\ell$. This is stated precisely in Theorem~\ref{thm:dirichlet_indirect_cfie}. 

The results mentioned above apply under mild assumptions on $M$, which are
stated precisely in Section~\ref{section_M}. In particular, the assumptions
are satisfied if $M$ is a positive operator that continuously maps
$H^{-1/2}(\Gamma)$ into $H^1(\Gamma)$. Important examples, also considered in~\cite{bh05,bs06},
that satisfy this assumption are the square of the single-layer operator of the Laplace
operator $V_0^2$, the single-layer operator for the bi-Laplacian $\Delta^2$,
and the inverse of $\scal-\Delta_\Gamma$ with $\scal> 0$
and $\Delta_\Gamma$ the Laplace--Beltrami operator.

Our first set of results is established under the assumption that the
operator $M$ is computed exactly. Although this might be a reasonable
assumption for the boundary integral operators mentioned above (up to
quadrature errors), this is not the case for $M = (\scal-\Delta_\Gamma)^{-1}$.
For this reason, if $M = (\scal-\Delta_\Gamma)^{-1}$, we introduce as in \cite{bh05}
an additional variable $f := M\phi$ for~\eqref{eq:bie_regularized_indirect} 
and $f := M[ (K_k'+1/2)\phi + W_k u]$ for \eqref{eq:bie_regularized}, and consider
the resulting mixed formulation.
The variable $\phi$ is discretized as above and $f$ is approximated by $H^1(\Gamma)$-conforming
piecewise polynomials of degree $p+2$.

Our second set of results therefore concerns these mixed formulations.
Specifically, in this case too, we propose a reliable and weakly efficient {\sl a posteriori}
error estimator that leads to an optimally convergent adaptive BEM algorithm. The corresponding
results are reported in Theorems~\ref{thm:estimator_mixed} and~\ref{thm:dirichlet_mixed_cfie}.

We finally present a set of numerical examples where the adaptive BEM based on
the regularized mixed formulations is compared to the standard
approach based on~\eqref{eq:bie_first_kind} and~\eqref{eq:bie_first_kind_indirect}. 
These examples illustrate our theory by highlighting that the proposed estimators for
the regularized mixed formulations are reliable and (weakly) efficient and that the adaptive
BEM converges at optimal rate.
In particular, the quality of the estimator and convergence speed of the adaptive algorithm
are not affected by nearby interior resonances. In contrast, our examples showcase that this
is not the case for the standard formulations~\eqref{eq:bie_first_kind} and~\eqref{eq:bie_first_kind_indirect}.

An important related comment is that although we do not precisely track the dependency on
$k$ in our analysis, all the constants involved remain uniformly bounded whenever
$k$ lies in a bounded interval in $(0,\infty)$. This means that although
the quality of the estimator and performance of the adaptive algorithm may deteriorate
for very small or very large values of $k$ (which is to be expected), it is indeed
not affected by spurious resonances. 
We emphasize that the focus of the present manuscript is on this robustness
of the regularized combined field formulation in the vicinity of
spurious resonances and its superiority as compared to the standard
one in this regime. In particular, we do not try to demonstrate any
robustness in the high-frequency regime.
Nonetheless, we also provide for completeness
{\sl a posteriori} estimates with constant independent of the frequency
provided that the mesh is fine enough. These results are also highlighted by
numerical experiments with high frequencies.

We finally point out that in this work, we only focus on the so-called \emph{sound-soft}
scattering problems where the boundary condition on $\Gamma$ is of Dirichlet type. Indeed,
\emph{sound-hard} scattering problems involving Neumann boundary conditions are easier to
deal with in the context of combined field integral equations, as they do not need any
regularization; see, e.g., \cite{hiptmair03}. As a result, {\sl a posteriori} error estimation
and optimality of the corresponding adaptive BEM follows as for standard boundary integral
equations~\cite{bbhp19}. For completeness, we mention that regularization can still be beneficial 
for Neumann boundary conditions in terms of preconditionning, see, e.g., \cite{ad07,bet12,bwg17}.
However, this is not the focus of the present manuscript.

\subsection{Outline}
The remainder of this work is organized as follows. In Section~\ref{sec:preliminaries}, 
we make the setting precise, set up notation and collect preliminary results from
the literature. Section~\ref{sec:dirichlet_indirect_cfie} deals with the indirect formulation~\eqref{eq:bie_regularized_indirect} 
where the operator $M$ is assumed to be exactly available. We then analyze in Section~\ref{sec:dirichlet_indirect_mixed},
the mixed formulation of~\eqref{eq:bie_regularized_indirect} where the operator $M = (\scal-\Delta_\Gamma)^{-1}$ is approximated.
Section~\ref{sec:direct} then briefly discusses how the results established for
the indirect formulations can be transferred to their direct counterparts.
%
%
Finally, we present a set of numerical examples in Section~\ref{sec:numerics}.

\section{Preliminaries} \label{sec:preliminaries}

\subsection{General notation}

Throughout and without any ambiguity, $|\cdot|$ denotes the absolute value of scalars, the Euclidean norm of vectors in $\R^n$, or the measure of a set in $\R^n$, e.g., the length of an interval or the area of a surface.
The cardinality of a finite set $\mathcal{S}$ is denoted by $\# \mathcal{S}$.
The set of all linear and continuous operators between two normed  spaces $X$ and $Y$ is denoted by $\LL(X,Y)$.

\subsection{Constants}
\label{sec:hidden_constants}

In the remainder of this work, we employ the notation $A \lesssim B$ for real numbers $A,B \geq 0$ to mean that there exists
a constant $C(d,k,\Gamma,M,\TT_0,p)$ depending \emph{at most} on the dimension $d$, the wavenumber $k$, the boundary $\Gamma$, the regularizing operator $M$,
the initial triangulation $\TT_0$ of $\Gamma$,
and the polynomial degree $p$ of the trial functions such that $A \leq C(d,k,\Gamma,M,\TT_0,p) B$.
Throughout the manuscript, all constants $C$ are additionally uniformly bounded in $k$ on bounded intervals. 
In particular, this applies to the constants $\const{rel},\const{eff},\const{lin},\const{opt}$ as well as $\rho_{\rm lin},\theta_{\rm opt}$ from Theorems~\ref{thm:estimator_indirect}, \ref{thm:dirichlet_indirect_cfie}, ~\ref{thm:estimator_mixed} and~\ref{thm:dirichlet_mixed_cfie}. 
Specifically, for all $K > 1$, 
the estimate $A \lesssim B$ holds true uniformly for all $k \in [1/K,K]$.
If both $A \lesssim B$ and $B \lesssim A$, we also write $A \eqsim B$.

\subsection{Domain and boundary}
Throughout this work, we consider a bounded Lipschitz domain $\Omega \subset \mathbb R^d$, $d\in\{2,3\}$; see e.g., \cite[Definition 3.28]{mclean00} for a precise definition.
We assume that the complement $\Omega^{\rm ext} := \mathbb R^d \setminus \overline{\Omega}$ is connected, and we abbreviate the joint boundary $\Gamma:=\partial\Omega=\partial\Omega^{\rm ext}$.

\subsection{Sobolev spaces on the boundary}
\label{sec:sobolev}

For $\sigma\in[0,1]$, we define the Hilbert spaces $H^{\pm\sigma}(\Gamma)$ as in \cite[page~99]{mclean00} by use of Bessel potentials on $\R^{d-1}$ and liftings via bi-Lipschitz mappings that describe $\Gamma$. 
For $\sigma=0$, it holds that $H^0(\Gamma)=L^2(\Gamma)$ with equivalent norms.
We thus may define $\norm{\cdot}{H^0(\Gamma)}:=\norm{\cdot}{L^2(\Gamma)}$.

For $\sigma\in(0,1]$,  any measurable subset $\omega\subseteq\Gamma$, and all
$v \in H^\sigma(\Gamma)$, we define the associated  Sobolev--Slobodeckij norm
\begin{align*}
\norm{v}{H^{\sigma}(\omega)}^2 := \norm{v}{L^2(\omega)}^2 + |v|_{H^{\sigma}(\omega)}^2
\end{align*}
with
\begin{align*}
|v|_{H^{\sigma}(\omega)}^2
:=
\int_\omega\int_\omega\frac{|v(x)-v(y)|^2}{|x-y|^{d-1+2\sigma}}\,dx dy
\end{align*}
if $\sigma \in (0,1)$, and
\begin{align*}
|v|_{H^1(\omega)} := \norm{\nabla_\Gamma v}{L^2(\omega)}.
\end{align*}
It is well-known that $\norm{\cdot}{H^\sigma(\Gamma)}$ provides
an equivalent norm on $H^\sigma(\Gamma)$. Here, $\nabla_\Gamma(\cdot)$
denotes the (weak) surface gradient.

For $\sigma\in(0,1]$, $H^{-\sigma}(\Gamma)$ is a realization of the dual space of $H^{\sigma}(\Gamma)$.
With the extended $L^2$-scalar product $\dual{\cdot}{\cdot}_{L^2(\Gamma)}$, we define an equivalent norm 
\begin{align*}
	\norm{\phi}{H^{-\sigma}(\Gamma)}
	:=\sup_{v\in H^\sigma(\Gamma)\setminus\{0\}} \frac{|\dual{\phi}{v}_{L^2(\Gamma)}|}{\norm{v}{H^\sigma(\Gamma)}}
	\quad \text{for all }\phi \in H^{-\sigma}(\Gamma).
\end{align*}

\subsection{Model problem}
\label{sec:model}

In the remainder of this work, we fix a wavenumber $k > 0$ as well as a Dirichlet datum
\begin{align}
\label{eq:regularity_u}
u \in H^1(\Gamma).
\end{align}
Our model problem is then to find $U: \Omega^{\rm ext} \to \mathbb C$ such that
\begin{equation}
\label{eq:helmholtz_strong}
\left \{
\begin{array}{rcll}
\Delta U + k^2 U &=& 0 & \text{ in } \Omega^{\rm ext},
\\
                U &=& u & \text{ on } \Gamma,
\\
\partial_r U-ikU &=& o(r^{(1-d)/2}) & \text{ as } r \to +\infty,
\end{array}
\right .
\end{equation}
where $r$ denotes the radial coordinate, and $\partial_r$ the radial derivative.
While the weak formulation of~\eqref{eq:helmholtz_strong} only requires the
assumption~$u \in H^{1/2}(\Gamma)$, the additional regularity~\eqref{eq:regularity_u}
is needed for the definition of the considered {\sl a posteriori} error  estimators.
This mild assumption is satisfied in many applications (e.g., plane wave scattering)
and does not preclude $U$ to be highly singular in the vicinity of corners and edges
(if $d=3$) of $\Gamma$.

As alluded to above, instead of directly computing $U$, we will
reformulate~\eqref{eq:helmholtz_strong} into a boundary integral
equation on $\Gamma$.

\subsection{Trace operators}
We denote by $\gamma_0^{\rm int}: H^1(\Omega)\to H^{1/2}(\Gamma)$ the interior trace operator, which coincides for smooth functions with the usual restriction $(\cdot)|_\Gamma$. 
The exterior trace operator $\gamma_0^{\rm ext}: H^1(\widetilde\Omega \setminus \overline\Omega)\to H^{1/2}(\Gamma)$ is defined analogously for any bounded Lipschitz domain $\widetilde\Omega\supset\overline\Omega$ as restriction to $\Gamma$.

With $H^1(\Delta;\Omega):=\set{u\in H^1(\Omega)}{\Delta u \in L^2(\Omega)}$, we denote by $\gamma_1^{\rm int}: H^1(\Delta,\Omega)\to H^{-1/2}(\Gamma)$ the normal derivative, which coincides for smooth functions with the usual normal derivative $\partial_\nu$. 
The exterior normal derivative $\gamma_1^{\rm ext}: H^1(\Delta;\widetilde\Omega \setminus \overline\Omega)\to H^{-1/2}(\Gamma)$ is defined analogously for any bounded Lipschitz domain $\widetilde\Omega\supset\overline\Omega$ as restriction to $\Gamma$. 

If the considered domain is clear from the context, we will simply write $(\cdot)|_\Gamma$ and $\partial_\nu$.  

\subsection{Layer potentials and boundary integral operators}
\label{sec:bio}
For $k>0$, we recall the Helmholtz kernel 
\begin{align*}
	G_k(z) := \frac{i}{4} H_0^{(1)}(k|z|) \quad \text{for } d=2,\qquad 
	G_k(z) := \frac{e^{ik|z|}}{4\pi |z|} \quad \text{for }d=3,
\end{align*}
where $H_0^{(1)}$ is the first-kind Hankel function of order zero. 
We further use the fundamental solution of the Laplace operator
\begin{align*}
	G_0(z) := -\frac{1}{2\pi} \log|z| \quad \text{for }d=2, \qquad G_0(z) := \frac{1}{4\pi |z|} \quad \text{for }d=3.
\end{align*}

For $k\ge 0$ and sufficiently smooth $\phi,u:\Gamma\to \C$, we define the single-layer potential 
\begin{align*}
	(\widetilde{V}_k\phi)(x) := \int_\Gamma G_k(x-y) \phi(y) \d y
	\quad \text{for all }x\in \R^d\setminus\Gamma
\end{align*}
and the double-layer potential
\begin{align*}
	(\widetilde{K}_k u)(x) := \int_\Gamma \partial_{\nu(y)} G_k(x-y) u(y) \d y
	\quad \text{for all }x\in \R^d\setminus\Gamma.
\end{align*}
For any bounded Lipschitz domain $\widetilde\Omega$, they give rise to bounded linear operators 
\begin{align*}
	\widetilde V_k\in \LL(H^{-1/2}(\Gamma),H^1(\widetilde\Omega)) \quad \text{and}\quad \widetilde K_k \in \LL(H^{1/2}(\Gamma),H^1(\widetilde\Omega\setminus\Gamma)).
\end{align*} 
For $\phi \in H^{-1/2}(\Gamma)$ and $v \in H^{1/2}(\Gamma)$, the potentials $\widetilde{V}_k\phi$ and $\widetilde{K}_kv$ are both solutions of the exterior Helmholtz problem~\eqref{eq:helmholtz_strong} (with suitable Dirichlet data). 
The exact solution of~\eqref{eq:helmholtz_strong} can be represented as
\begin{align}\label{eq:representation}
	U = \widetilde{K}_k(U|_\Gamma) - \widetilde{V} _k(\partial_\nu U).
\end{align}

Applying the trace operators defines the single-layer operator 
\begin{align*}
	V_k:=\gamma_0^{\rm int} \widetilde{V}_k=\gamma_0^{\rm ext} \widetilde{V}_k \in \LL( H^{-1/2}(\Gamma), H^{1/2}(\Gamma)),
\end{align*}
the double-layer operator 
\begin{align*}
	K_k:=\gamma_0^{\rm int} \widetilde{K}_k+1/2=\gamma_0^{\rm ext} \widetilde{K}_k-1/2 \in \LL( H^{1/2}(\Gamma), H^{1/2}(\Gamma)),
\end{align*}
the adjoint double-layer operator 
\begin{align*}
	K_k':=\gamma_1^{\rm int} \widetilde{V}_k-1/2=\gamma_1^{\rm int} \widetilde{V}_k+1/2 \in \LL( H^{-1/2}(\Gamma), H^{-1/2}(\Gamma)),
\end{align*} 
and the hypersingular operator 
\begin{align*}
	W_k:=-\gamma_1^{\rm int} \widetilde{K}_k \in \LL( H^{1/2}(\Gamma), H^{-1/2}(\Gamma)).
\end{align*} 
For $k=0$, the additional mapping properties 
\begin{subequations}
\label{eq:mapping_properties}
\begin{align}
	V_k&\in \LL(H^{-1/2+\sigma}(\Gamma) \to H^{1/2+\sigma}(\Gamma)), 
	\\ 
	K_k&\in \LL(H^{1/2+\sigma}(\Gamma),H^{1/2+\sigma}(\Gamma)),
	\\ \label{eq:mapping_properties_K_pr}
	K_k'&\in \LL(H^{-1/2+\sigma}(\Gamma),H^{-1/2+\sigma}(\Gamma)), 
	\\
	W_k&\in \LL(H^{1/2+\sigma}(\Gamma),H^{-1/2+\sigma}(\Gamma))
\end{align}
\end{subequations}
are well-known for all $\sigma\in [-1/2,1/2]$. 
In~\cite{bbhp19,gp22b}, these mapping properties are also verified for all $k>0$ (and the critical cases $\sigma\in \{-1/2,1/2\}$).

The single-layer operator $V_0:H^{-1/2}(\Gamma)\to H^{1/2}(\Gamma)$ is symmetric and elliptic,
provided that $\diam(\Omega)<1$ if $d=2$. If $d=2$ and $\diam(\Omega)\ge1$, this is still true
up to a compact perturbation, i.e., there is a symmetric and elliptic
$V_0^+:H^{-1/2}(\Gamma)\to H^{1/2}(\Gamma)$ such that $V_0 - V_0^+$ is compact.
If $d=3$ or $d=2$ with $\diam(\Omega)<1$, we abbreviate $V_0^+:=V_0$.

The operator $V_k:H^{-1/2}(\Gamma)\to H^{1/2}(\Gamma)$ is in general only elliptic up to the
compact perturbation $V_0^+-V_k$ with kernel given by the Neumann traces of Dirichlet eigenfunctions.

For a more detailed introduction to layer potentials and boundary integral operators,
we refer to the monographs~\cite{mclean00,steinbach08,ss11}.

\subsection{Meshes and refinement}

Let $T_{\rm ref}$ denote the reference element defined by 
\begin{align*}
	T_{\rm ref} := (0,1) \quad \text{for } d=2 
	\quad\text{and}\quad
	T_{\rm ref} := {\rm conv}\{(0,0),(1,0),(0,1)\} \quad \text{for }d=3.
\end{align*}
We consider conforming triangulations $\TT_\coarse$ of $\Gamma$ characterized similarly as in \cite{affkmp17} by the following properties:
\begin{itemize}
\item Each $T\in\TT_\coarse$ is a relative open subset of $\Gamma$, and there exists a bi-Lipschitz parametrization $\gamma_T:\overline{T_{\rm ref}}\to \overline T$ such that $\gamma_T \in C^2(T_{\rm ref},\R^d)$.
\item The union of all elements cover $\Gamma$, i.e., $\Gamma = \bigcup \overline T$.
\item There are no hanging nodes, i.e., for all $T\neq T'\in\TT_\coarse$, the intersection $\overline T\cap \overline{T'}$ is either empty, a joint node (for $d\ge 2$), or a joint face (for $d=3$).
\item For $d=3$, parametrizations of common boundary parts of neighboring elements are compatible, i.e., if $\emptyset \neq \overline T \cap \overline{T'}=\gamma_T(E_{\rm ref}) = \gamma_{T'}(E_{\rm ref}')$ is a joint edge, then $\gamma_T^{-1}\circ\gamma_{T'}:E_{\rm ref}'\to E_{\rm ref}$ is affine.
\end{itemize}
We define the mesh-size function $h_\coarse \in L^\infty(\Gamma)$ by $h_\coarse|_T:=h_T:=|T|^{1/(d-1)}$ for all $T\in\TT_\coarse$. 
Moreover, let $\NN_\coarse$ denote the set of all nodes in $\TT_\coarse$ with corresponding node patches $\omega_\coarse(z):=\overline{\bigcup \set{T\in\TT_\coarse}{z\in \overline T}}$ for $z\in\NN_\coarse$. 

Let $\TT_0$ be a conforming initial triangulation meeting the above requirements.
(Later, we shall need $\TT_0$ to be sufficiently fine; cf.\ Sections~\ref{sec:dirichlet_indirect_cfie:discretization} and \ref{sec:dirichlet_indirect_mixed:discretization}.)
As mesh refinement strategy, we employ the extended 1D bisection from \cite{affkp13} for $d=2$ and newest vertex bisection from, e.g., \cite{stevenson08}, for $d=3$.
Given a conforming triangulation $\TT_\coarse$ and a set of marked elements $\MM_\coarse\subseteq\TT_\coarse$, the call $\TT_\fine = \refine(\TT_\coarse,\MM_\coarse)$ returns the coarsest refinement $\TT_\fine$ of $\TT_\coarse$ such that all $T\in\MM_\coarse$ have been refined. 
We define the set $\refine(\TT_\coarse)$ as the set of all conforming $\TT_\fine$ that can be obtained from $\TT_\coarse$ by a finite number of refinement steps. 
In particular, $\TT_\coarse$ itself is always in the set of its refinements, i.e., $\TT_\coarse\in\refine(\TT_\coarse)$. 
For the initial mesh, we call $\T:=\refine(\TT_0)$ the set of all admissible triangulations. 
The employed refinement strategies guarantee shape-regularity 
\begin{align*}
	h_T \simeq \diam(T) \quad \text{for all } T\in\TT_\coarse\in\T
\end{align*}
and local quasi-uniformity
\begin{align*}
	h_T \simeq h_{T'} \quad \text{for all } T,T'\in\TT_\coarse\in\T \text{ with }\overline T\cap \overline{T'}\neq \emptyset,
\end{align*}
with hidden constants depending only on the initial triangulation $\TT_0$. 
Moreover, for sequences of successively refined triangulations $(\TT_\ell)_{\ell\in\N_0}$ with $\TT_\ell=\refine(\TT_{\ell-1},\MM_{\ell-1})$ for some $\MM_{\ell-1}\subseteq \TT_{\ell-1}$, and $\TT_\ell \in\T$,  there hold for all $\ell\in\N$  the following three crucial properties:
\begin{enumerate}[(1)]
\renewcommand{\theenumi}{R\arabic{enumi}}
	\item Child estimate: \label{item:child} $\#\TT_\ell \le \const{child} \#\TT_{\ell-1}$ with $\const{child}=2$ for $d=2$ and $\const{child}=4$ for $d=3$; 
	\item Overlay estimate: \label{item:overlay} $\exists(\TT_\ell\oplus\TT_\coarse)\in \refine(\TT_\ell)\cap \refine(\TT_\coarse)$ s.t.\ $(\TT_\ell\oplus\TT_\coarse) \le \#\TT_\ell + \# \TT_\coarse - \#\TT_0$;
	\item Closure estimate: \label{item:closure} $\#\TT_\ell - \#\TT_0 \le \const{clo} \sum_{j=0}^{\ell-1} \# \MM_j$ with $\const{clo}\ge1$ depending only on $\TT_0$.
\end{enumerate}
The child estimate~\eqref{item:child} follows from the fact that in one refinement step, each element is refined at most into $2$ and $4$ elements, respectively, see, e.g., \cite{affkp13} for $d=2$ and \cite{stevenson08} for $d=3$. 
A triangulation $\TT_\ell\oplus\TT_\coarse$ with~\eqref{item:overlay} can indeed be chosen as the literal overlay of two meshes, see, e.g., \cite{affkp13} for $d=2$ and~\cite{stevenson07} for $d=3$. 
The closure estimate~\eqref{item:closure} is proved in \cite{affkp13} for $d=2$ and in \cite{bdd04,stevenson08} for $d=3$. 
The latter two references actually require an admissibility condition of $\TT_0$, which was shown to be redundant in~\cite{kpp13}.

We only mention that the results of the present work also apply for other types of meshes,
for instance curvilinear quadrangulations; cf.\ \cite{gp22b,gp22c,bggpv22} for details in
case of the standard integral equation in~\eqref{eq:bie_first_kind}.

\subsection{Discrete spaces}
Let $\TT_\coarse\in\T$ be a given triangulation. 
For any polynomial degree $q \in\N_0$, we define the space of all (discontinuous) $\TT_\coarse$-piecewise polynomials as
\begin{align*}
	\PP^{q}(\TT_\coarse)
	:= \set{\psi_h \in L^\infty(\Gamma)}{\psi_h|_T\circ\gamma_T \text{ is a polynomial of degree }\le q \text{ for all } T\in\TT_\coarse}.
\end{align*}
For $q\in\N$, we define the space of continuous $\TT_\coarse$-piecewise polynomials as $\mathcal{S}^q(\TT_\coarse):= \PP^q(\TT_\coarse) \cap H^1(\Gamma)$. 
Note that 
\begin{align*}
	\PP^{q}(\TT_\coarse) \subset L^2(\Gamma) \subset H^{-1/2}(\Gamma) 
	\text{ for } q \in\N_0
	\quad\text{and}\quad
	\mathcal{S}^q (\TT_\coarse) \subset H^1(\Gamma) \subset H^{1/2}(\Gamma)
	\text{ for } q \in\N.
\end{align*}

For the remainder of this work, we fix a polynomial degree $p \in \mathbb N_0$.
We will consider the space $\PP^p(\TT_\coarse)$ and the space $\PP^p(\TT_\coarse)\times \mathcal{S}^{p+2}(\TT_\coarse)$ as trial space for the discretization of the considered combined field integral equation and its formulation as mixed system, respectively.

We only mention that the results of the present work also apply for splines on quadrangulations; cf.\ \cite{gp22b,gp22c,bggpv22} for details in case of standard integral equations~\eqref{eq:bie_first_kind}. 

\subsection{Inverse estimates}
The following crucial inverse estimates were proved in \cite{affkmp17}
for $k=0$ and generalized in~\cite{bbhp19,gp22b} to arbitrary $k\ge0$:
\begin{subequations}
\label{eq:invest}
\begin{align}\label{eq:invest_V}
	\norm{h_\coarse^{1/2} \nabla_\Gamma V_k\psi}{L^2(\Gamma)}
	&\lesssim
	\norm{\psi}{H^{-1/2}(\Gamma)} + \norm{h_\coarse^{1/2} \psi}{L^2(\Gamma)},
	\\ \label{eq:invest_K}
	\norm{h_\coarse^{1/2} \nabla_\Gamma K_k v}{L^2(\Gamma)}
	&\lesssim
	\norm{v}{H^{1/2}(\Gamma)} + \norm{h_\coarse^{1/2} \nabla_\Gamma v}{L^2(\Gamma)},
	\\ \label{eq:invest_K_pr}
	\norm{h_\coarse^{1/2} K_k'\psi}{L^2(\Gamma)}
	&\lesssim
	\norm{\psi}{H^{-1/2}(\Gamma)} + \norm{h_\coarse^{1/2} \psi}{L^2(\Gamma)},
	\\ \label{eq:invest_W}
	\norm{h_\coarse^{1/2} W_k v}{L^2(\Gamma)}
	&\lesssim
	\norm{v}{H^{1/2}(\Gamma)} + \norm{h_\coarse^{1/2} \nabla_\Gamma v}{L^2(\Gamma)}
\end{align}
\end{subequations}
for all triangulations $\TT_\coarse\in\T$ and all function $\psi \in L^2(\Gamma)$ and $v \in H^{1}(\Gamma)$ with hidden constants that do not depend on the wavenumber $k$.

We further recall the discrete inverse estimates 
\begin{subequations}
\label{eq:discrete_invest}
\begin{align} \label{eq1:discrete_invest}
	\norm{h_\coarse^{1/2}\psi_\coarse}{L^2(\Gamma)} &\lesssim
	\norm{\psi_\coarse}{H^{-1/2}(\Gamma)} 
	\quad\text{for all }\psi_\coarse \in \PP^p(\TT_\coarse),
	\\
	\norm{h_\coarse^{1/2}\nabla_\Gamma v_\coarse}{L^2(\Gamma)} &\lesssim
	\norm{v_\coarse}{H^{1/2}(\Gamma)} 
	\quad\text{for all }v_\coarse \in \mathcal{S}^{p+2}(\TT_\coarse);
\end{align}
\end{subequations}
with hidden constants that do not depend on the polynomial degree $p$;
cf.\ \cite[Lemma~A.1]{affkmp17} and \cite[Proposition~5]{affkp15}, see also the original works~\cite{dfghs04,ghs05}.

\subsection{Regularizing operator}
\label{section_M}

Our regularized combined field integral equations invovle a regularizing operator
$M: H^{-1/2}(\Gamma) \to H^{1/2}(\Gamma)$. Throughout, we assume that 
\begin{align}
\label{eq:compact}
\tag{M1}
M \in \LL(H^{-1/2}(\Gamma), H^{1/2}(\Gamma)) \text{ is compact,}
\end{align} 
that it is positive in the sense that
\begin{align}
\label{eq:positive}
\tag{M2}
{\rm Re} \dual{M\psi}{\psi}_{L^2(\Gamma)} > 0 \quad \text{for all } \psi \in H^{-1/2}(\Gamma) \setminus \{0\},
\end{align}
and that it satisfies the following inverse inequality
\begin{align}
\label{eq:assumption_inverse_new}
\tag{M3}
	\norm{h_\coarse^{1/2} \nabla_\Gamma M\psi}{L^2(\Gamma)}
	\lesssim \norm{\psi}{H^{-1/2}(\Gamma)} + \norm{h_\coarse^{1/2} \psi}{L^2(\Gamma)}
	\quad\text{for all }\psi \in L^2(\Gamma),
\end{align}
which is similar to the inverse inequalities in~\eqref{eq:invest} for the standard integral operators.
Note that \eqref{eq:compact} and \eqref{eq:assumption_inverse_new} particularly imply that
\begin{align}
\label{eq:assumption_h1}
M \in \LL(L^2(\Gamma), H^1(\Gamma)).
\end{align}
Conversely, we point out that~\eqref{eq:compact} and~\eqref{eq:assumption_inverse_new} are both
trivially satisfied provided that $M \in \LL(H^{-1/2}(\Gamma),H^1(\Gamma))$, which is the case
for all concrete examples of the operator $M$ considered in this work.

\begin{proposition}
\label{prop:assumptions}
If $M \in \LL(H^{-1/2}(\Gamma),H^1(\Gamma))$, then~\eqref{eq:compact}
and~\eqref{eq:assumption_inverse_new} hold true.
\end{proposition}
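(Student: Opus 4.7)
The plan is to derive each of the two claims from standard facts: (M1) from the compact Sobolev embedding $H^1(\Gamma) \hookrightarrow H^{1/2}(\Gamma)$, and (M3) from the boundedness of $M$ and the uniform boundedness of the mesh-size over $\T$.

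For \eqref{eq:compact}, I would factor $M: H^{-1/2}(\Gamma) \to H^{1/2}(\Gamma)$ as the composition
\begin{equation*}
H^{-1/2}(\Gamma) \xrightarrow{M} H^1(\Gamma) \xhookrightarrow{\iota} H^{1/2}(\Gamma),
\end{equation*}
where $M$ is continuous by assumption and the embedding $\iota$ is compact. The compactness of $\iota$ follows from the Rellich--Kondrachov-type theorem for Sobolev spaces on the compact Lipschitz manifold $\Gamma$ (this is a classical result, available e.g.\ in \cite{mclean00}, where the spaces $H^\sigma(\Gamma)$ are defined via localization and Bessel potentials). Since the composition of a bounded operator with a compact one is compact, the claim follows immediately.

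For \eqref{eq:assumption_inverse_new}, the boundedness $M \in \LL(H^{-1/2}(\Gamma), H^1(\Gamma))$ together with $|\cdot|_{H^1(\Gamma)} = \|\nabla_\Gamma \cdot\|_{L^2(\Gamma)}$ yields
\begin{equation*}
\|\nabla_\Gamma M\psi\|_{L^2(\Gamma)} \le \|M\psi\|_{H^1(\Gamma)} \lesssim \|\psi\|_{H^{-1/2}(\Gamma)}
\end{equation*}
for every $\psi \in H^{-1/2}(\Gamma)$, and in particular for every $\psi \in L^2(\Gamma)$. Pulling the weight $h_\coarse^{1/2}$ inside as an $L^\infty$ factor gives
\begin{equation*}
\|h_\coarse^{1/2} \nabla_\Gamma M\psi\|_{L^2(\Gamma)} \le \|h_\coarse\|_{L^\infty(\Gamma)}^{1/2}\,\|\nabla_\Gamma M\psi\|_{L^2(\Gamma)} \lesssim \|h_\coarse\|_{L^\infty(\Gamma)}^{1/2} \, \|\psi\|_{H^{-1/2}(\Gamma)}.
\end{equation*}
Since every $\TT_\coarse \in \T$ is a refinement of $\TT_0$, one has $\|h_\coarse\|_{L^\infty(\Gamma)} \le \|h_0\|_{L^\infty(\Gamma)}$, a constant depending only on $\TT_0$ and therefore admissible under the convention of Section~\ref{sec:hidden_constants}. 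The desired bound then follows a fortiori, with the second summand on the right-hand side of \eqref{eq:assumption_inverse_new} not even needed.

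There is really no substantial obstacle here; the proposition is essentially a bookkeeping statement verifying that the abstract hypotheses on $M$ reduce to the concrete mapping property $M \in \LL(H^{-1/2}(\Gamma), H^1(\Gamma))$. The only mild point to check carefully is that the constant in \eqref{eq:assumption_inverse_new} is allowed to depend on $\TT_0$, which is precisely why the uniform bound $\|h_\coarse\|_{L^\infty(\Gamma)} \lesssim 1$ across $\T$ is sufficient.
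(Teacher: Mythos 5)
Your argument is essentially the same as the paper's: (M1) via the compact embedding $H^1(\Gamma)\hookrightarrow H^{1/2}(\Gamma)$ composed with the bounded $M$, and (M3) by pulling out $h_\coarse^{1/2}$ as an $L^\infty$ factor (the paper writes $h_\coarse\lesssim\diam(\Gamma)$, you write $\|h_\coarse\|_{L^\infty(\Gamma)}\le\|h_0\|_{L^\infty(\Gamma)}$, but these are the same observation) and invoking $M\in\LL(H^{-1/2}(\Gamma),H^1(\Gamma))$. The proof is correct and matches the paper.
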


\begin{proof}
The requirement in~\eqref{eq:compact} simply follows from the fact
that the injection $H^1(\Gamma) \hookrightarrow H^{1/2}(\Gamma)$ is compact.
For~\eqref{eq:assumption_inverse_new}, the inequality~$h_\coarse \lesssim \diam(\Gamma)$
and continuity of $M$ show for all $\psi \in H^{-1/2}(\Gamma)$ that
\begin{equation*}
\|h_\coarse^{1/2}\nabla_\Gamma M \psi\|_{L^2(\Gamma)}
\lesssim
\|M \psi\|_{H^1(\Gamma)}
\lesssim
\|\psi\|_{H^{-1/2}(\Gamma)},
\end{equation*}
and trivially,
\begin{equation*}
\|h_\coarse^{1/2}\nabla_\Gamma M \psi\|_{L^2(\Gamma)}
\lesssim
\|\psi\|_{H^{-1/2}(\Gamma)} + \|h_\coarse^{1/2} \psi\|_{L^2(\Gamma)}
\end{equation*}
whenever $\psi \in L^2(\Gamma)$.
\end{proof}

As suggested in~\cite{bs06}, the operator $M$ might be realized with an integral operator.
For instance, we can use the square of the single-layer operator $V_0^2$ (provided that
$\diam(\Omega)<1$ if $d=2$) or the single-layer operator
of the biharmonic operator $(\scal-\Delta)^2$ with $\scal > 0$.
Both of them continuously map $H^{-1}(\Gamma)$ to $H^1(\Gamma)$, 
which implies \eqref{eq:compact} and~\eqref{eq:assumption_inverse_new},
and also satisfy~\eqref{eq:positive}.

Another interesting choice of $M$ is the inverse of
a differential operator. In particular, we will consider, as in~\cite{bh05},
$M := (\scal-\Delta_\Gamma)^{-1}$ with $\scal > 0$. 
Here, $\Delta_\Gamma\in \LL(H^1(\Gamma),H^{-1}(\Gamma))$ is the Laplace--Beltrami operator defined weakly as
\begin{align}
	\dual{\Delta_\Gamma v }{w}_{L^2(\Gamma)}:=-\dual{\nabla_\Gamma v}{\nabla_\Gamma w}_{L^2(\Gamma)}
	\quad\text{for all }v,w\in H^1(\Gamma).
\end{align}
Also this choice of $M$ continuously maps $H^{-1}(\Gamma)$ to $H^1(\Gamma)$.
Moreover, there holds the ellipticity
\begin{align*}
	{\rm Re}\dual{M\psi}{\psi}_{L^2(\Gamma)} \gtrsim \norm{\psi}{H^{-1}(\Gamma)} 
	\quad \text{for all }\psi \in H^{-1}(\Gamma),
\end{align*}
which guarantees positivity~\eqref{eq:positive}.
However, as already mentioned, such an $M$ is not computable (even up to
quadrature errors), and it needs to be approximated. This leads to mixed formulations analyzed
in Sections~\ref{sec:dirichlet_indirect_mixed} and~\ref{sec:direct_mixed}.

Similarly, one could consider, as in~\cite{bh05}, the inverse of the piecewise Laplace--Beltrami
operator for $M$, i.e., $M\psi \in H^1_0(\TT_0)$ with
\begin{align}
\label{eq:pw_laplace_beltrami}
\dual{\nabla_\Gamma M \psi}{\nabla_\Gamma v}_{L^2(\Gamma)} = \dual{\psi}{v}_{L^2(\Gamma)}
\quad\text{for all } \psi\in H^{-1}(\Gamma), v \in H_0^1(\TT_0),
\end{align}
where $H_0^1(\TT_0) := \set{v \in H^1(\Gamma)}{v|_{T_0} \in H_0^1(T)\text{ for all }T_0\in\TT_0}$. 
This allows to exploit the shift theorem on the regular surfaces $T_0\in\TT_0$, being diffeomorphic
images of convex sets.

\begin{remark}
The works \cite{hiptmair03,es07,es08} consider a regularizing operator $M$ that involves the inverse of a \emph{stabilized} hypersingular operator $\widetilde W_0$. 
For general Lipschitz domains, this operator is \emph{not} compact. 
As in Sections~\ref{sec:dirichlet_indirect_mixed} and~\ref{sec:direct_mixed}, where we will consider $M := (\scal-\Delta_\Gamma)^{-1}$, this approach naturally leads to a mixed system. 
Similarly as we do for the Laplace--Beltrami equation, one can use a second weighted-residual estimator for the involved hypersingular integral equation. 
We expect that the analogous results of Sections~\ref{sec:dirichlet_indirect_mixed} and~\ref{sec:direct_mixed} can be proved by combining the analysis of this work 
with the {\sl a posteriori} and convergence analysis for the standard hypersingular integral equation~\cite{ffkmp15}.
\end{remark}

\section{Indirect integral equation} \label{sec:dirichlet_indirect_cfie}

Given a regularizing operator $M\in \LL(H^{-1/2}(\Gamma), H^{1/2}(\Gamma))$
satisfying~\eqref{eq:compact} and~\eqref{eq:positive}, we follow \cite{bh05} and
make the ansatz 
\begin{align}\label{eq:dirichlet_ansatz}
	U = \widetilde V_k \phi + i \widetilde K_k M \phi
\end{align}
for some suitable density function $\phi \in H^{-1/2}(\Gamma)$ to obtain a solution of the exterior Dirichlet problem~\eqref{eq:helmholtz_strong}.
Applying the (exterior) trace operator to~\eqref{eq:dirichlet_ansatz} and exploiting the Dirichlet condition of~\eqref{eq:helmholtz_strong}, we see that 
\begin{align*}
	u = U|_\Gamma = V_k \phi + i (K_k+1/2) M \phi.
\end{align*}
With the short-hand notation
\begin{equation*}
	S:= V_k + i(K_k+1/2) M \in \LL(H^{-1/2}(\Gamma),H^{1/2}(\Gamma)),
\end{equation*}
we hence need to find $\phi \in H^{-1/2}(\Gamma)$ such that
\begin{align}
\label{eq:dirichlet_indirect_cfie}
S\phi = u.
\end{align}

Clearly, the operator $S$ is the sum of the symmetric and elliptic $V_0^+$ and the compact perturbation $V_k-V_0^+ + i(K_k+1/2) M$. 
According to~\cite{bh05}, $S$ is also injective and thus even bijective by the Fredholm alternative. 
In other words, the problem in~\eqref{eq:dirichlet_indirect_cfie} is well-posed for all $u\in H^{1/2}(\Gamma)$.

\subsection{Discretization}
\label{sec:dirichlet_indirect_cfie:discretization}

Given any $\TT_\coarse\in\T$, the discrete problem consists in finding a Galerkin approximation
$\phi_\coarse \in \PP^p(\TT_\coarse)$ of $\phi$
satisfying the discrete variational formulation
\begin{align}\label{eq:dirichlet_indirect_cfie:discretization}
	\dual{S\phi_\coarse}{\psi_\coarse}_{L^2(\Gamma)} = \dual{u}{\psi_\coarse}_{L^2(\Gamma)}
	\quad \text{for all } \psi_\coarse \in \PP^p(\TT_\coarse).
\end{align}

To theoretically guarantee well-posedness of~\eqref{eq:dirichlet_indirect_cfie:discretization},
we assume that the initial triangulation $\TT_0$ 
is sufficiently fine such that 
\begin{align}\label{eq:dirichlet_indirect_cfie:discrete_inf-sup}
	\inf_{\chi_\coarse \in \PP^p(\TT_\coarse)}\sup_{\psi_\coarse \in \PP^p(\TT_\coarse)}  \frac{|\dual{S \chi_\coarse}{\psi_\coarse}_{L^2(\Gamma)}|}{\norm{\psi_\coarse}{H^{-1/2}(\Gamma)}}
	\gtrsim 1
	\quad \text{for all }\TT_\coarse\in\T,
\end{align}
where we make the convention that $\tfrac00:=0$; see also Remark~\ref{rem:vienna_cheat} for a way (which is not fully satisfactory though) to avoid this assumption.

Owing to the fact that $S$ is symmetric and elliptic up to some compact perturbation,
it is well known that there exists some ${\tt h} \gtrsim 1$ such that~\eqref{eq:dirichlet_indirect_cfie:discrete_inf-sup}
is satisfied whenever $h_T \leq {\tt h}$ for all $T \in \TT_0$; see, e.g., \cite[Theorem 4.2.9]{ss11} or \cite[Proposition~1]{bhp17}.
The dependency on ${\tt h}$, $k$, $p$, and $\Gamma$ is hard to track, but we refer the reader to~\cite{lohndorf_melenk_2011a,galkowski_spence_2023a} for the case without regularization.

\begin{remark}
Given any $\TT_\coarse\in\T$, the unique Galerkin approximation $\phi_\coarse \in \PP^p(\TT_\coarse)$ of $\phi$
obtained by solving~\eqref{eq:dirichlet_indirect_cfie:discretization} satisfies the
C\'ea lemma
\begin{align}\label{eq:dirichlet_indirect_cfie:cea}
	\norm{\phi - \phi_\coarse}{H^{-1/2}(\Gamma)} \lesssim \min_{\psi_\coarse \in \PP^p(\TT_\coarse)} \norm{\phi - \psi_\coarse}{H^{-1/2}(\Gamma)}.
\end{align}
\end{remark}

\subsection{A posteriori error estimation}\label{sec:dirichlet_indirect_cfie:posteriori}

We employ the standard weighted-residual error estimator 
\begin{align}
\label{eq:estimator_indirect}
\eta_\coarse
:=
\norm{h_\coarse^{1/2} \nabla_\Gamma (u - S\phi_\coarse)}{L^2(\Gamma)}.
\end{align}
An analogous estimator was introduced for the Laplace single-layer operator $V_0$ instead of
$S$ in~\cite{cs95,cs96,carstensen97}; see also \cite{bbhp19} for the Helmholtz
single-layer operator $V_k$. We show that $\eta_\coarse$ is reliable and weakly efficient.

\begin{theorem}
\label{thm:estimator_indirect}
Let $M$ be a regularizing operator with~\eqref{eq:compact}, \eqref{eq:positive}, and \eqref{eq:assumption_inverse_new}.
Then, there exist constants $\const{rel},\const{eff} >0$ such that
\begin{align}
\label{eq:reliability_indirect}
\const{rel}^{-1} \|\phi-\phi_\coarse\|_{H^{-1/2}(\Gamma)} \le \eta_\coarse
\le \const{eff} \norm{h_\coarse^{1/2} (\phi - \phi_\coarse)}{L^2(\Gamma)}
\quad\text{for all }\TT_\coarse\in\T.
\end{align}
The constants $\const{rel}$ and $\const{eff}$ depend only on $d, k, \Gamma, M$, and $\TT_0$.
\end{theorem}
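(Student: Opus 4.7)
My plan is to prove the two sides of~\eqref{eq:reliability_indirect} by separate arguments: a duality/orthogonality argument for reliability, and the inverse estimates combined with a quasi-optimality/Aubin--Nitsche argument for efficiency. The starting observation used in both directions is that the residual $R := u - S\phi_\coarse = S(\phi - \phi_\coarse)$ lies in $H^1(\Gamma)$: the datum $u$ does by~\eqref{eq:regularity_u}, and $S\phi_\coarse$ does because $\phi_\coarse \in \PP^p(\TT_\coarse) \subset L^2(\Gamma)$ and the mapping properties~\eqref{eq:mapping_properties} together with~\eqref{eq:assumption_h1} and $K_k \in \LL(H^1(\Gamma), H^1(\Gamma))$ apply.

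For the reliability bound, since $S$ is a bijective isomorphism $H^{-1/2}(\Gamma) \to H^{1/2}(\Gamma)$, I would write
\[
\|\phi - \phi_\coarse\|_{H^{-1/2}(\Gamma)} \lesssim \|R\|_{H^{1/2}(\Gamma)} = \sup_{\psi \in H^{-1/2}(\Gamma) \setminus\{0\}} \frac{|\dual{R}{\psi}_{L^2(\Gamma)}|}{\|\psi\|_{H^{-1/2}(\Gamma)}}.
\]
Given such a $\psi$, I would exploit the Galerkin orthogonality $\dual{R}{\psi_\coarse}_{L^2(\Gamma)} = 0$ for all $\psi_\coarse \in \PP^p(\TT_\coarse)$ by choosing $\psi_\coarse = \mathcal{I}_\coarse \psi$, where $\mathcal{I}_\coarse : H^{-1/2}(\Gamma) \to \PP^p(\TT_\coarse)$ is the Scott--Zhang-type quasi-interpolation from~\cite{affkmp17} which preserves element-wise integral means and satisfies $\|h_\coarse^{1/2}(\psi - \mathcal{I}_\coarse\psi)\|_{L^2(\Gamma)} \lesssim \|\psi\|_{H^{-1/2}(\Gamma)}$. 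Because the mean of $\psi - \mathcal{I}_\coarse\psi$ over each element vanishes, I may subtract on each $T$ the integral mean $\overline{R}_T$ of $R$ and apply the Poincar\'e inequality $\|R - \overline{R}_T\|_{L^2(T)} \lesssim h_T \|\nabla_\Gamma R\|_{L^2(T)}$ together with an element-wise Cauchy--Schwarz to obtain $|\dual{R}{\psi}_{L^2(\Gamma)}| \lesssim \eta_\coarse \|\psi\|_{H^{-1/2}(\Gamma)}$.

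For the efficiency bound, I would decompose $R = V_k(\phi - \phi_\coarse) + i(K_k + 1/2)M(\phi - \phi_\coarse)$, apply~\eqref{eq:invest_V} to the single-layer term and~\eqref{eq:invest_K} (with $v = M(\phi - \phi_\coarse) \in H^1(\Gamma)$ by~\eqref{eq:assumption_h1}) to the double-layer term, and bound the $M$-contributions using the continuity $M \in \LL(H^{-1/2}(\Gamma), H^{1/2}(\Gamma))$ together with~\eqref{eq:assumption_inverse_new}. This yields
\[
\eta_\coarse \lesssim \|\phi - \phi_\coarse\|_{H^{-1/2}(\Gamma)} + \|h_\coarse^{1/2}(\phi - \phi_\coarse)\|_{L^2(\Gamma)}.
\]
To absorb the first term, I would use the C\'ea lemma~\eqref{eq:dirichlet_indirect_cfie:cea} with the $L^2$-projection $\Pi_\coarse^p\phi \in \PP^p(\TT_\coarse)$ as competitor, and then an Aubin--Nitsche-type duality argument. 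Since $\phi - \Pi_\coarse^p\phi$ is $L^2$-orthogonal to $\PP^p(\TT_\coarse)$, essentially the same quasi-interpolation argument used for reliability (applied to $\phi - \Pi_\coarse^p\phi$ instead of $R$) shows $\|\phi - \Pi_\coarse^p\phi\|_{H^{-1/2}(\Gamma)} \lesssim \|h_\coarse^{1/2}(\phi - \Pi_\coarse^p\phi)\|_{L^2(\Gamma)}$, and the element-wise $L^2$-optimality of $\Pi_\coarse^p$ bounds the right-hand side by $\|h_\coarse^{1/2}(\phi - \phi_\coarse)\|_{L^2(\Gamma)}$.

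The main obstacle I expect is the identification of a quasi-interpolation $\mathcal{I}_\coarse$ with the required weighted fractional approximation property into $\PP^p(\TT_\coarse)$ for every $p \in \N_0$; the construction of~\cite{affkmp17} covers this and is the technical crux of both steps. Once this is granted, everything else reduces to a mechanical combination of the stated mapping properties of $V_k, K_k, M$, the inverse estimates~\eqref{eq:invest} and~\eqref{eq:assumption_inverse_new}, and the bijectivity of $S$ already established via the Fredholm alternative in~\cite{bh05}.
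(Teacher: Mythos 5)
Your efficiency argument is essentially the one in the paper: decompose the residual with the inverse estimates~\eqref{eq:invest_V}, \eqref{eq:invest_K}, \eqref{eq:assumption_inverse_new}, then absorb $\norm{\phi-\phi_\coarse}{H^{-1/2}(\Gamma)}$ via the stability of the Galerkin projection and the $L^2$-projection as in~\cite[Corollary~3.3]{affkmp17}. That part is fine.

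Your reliability argument, however, has a genuine gap. You postulate a quasi-interpolation $\mathcal{I}_\coarse: H^{-1/2}(\Gamma) \to \PP^p(\TT_\coarse)$ preserving element means and satisfying $\|h_\coarse^{1/2}(\psi - \mathcal{I}_\coarse\psi)\|_{L^2(\Gamma)} \lesssim \|\psi\|_{H^{-1/2}(\Gamma)}$, but no such operator can exist. Take a sequence $\psi_n \in L^2(\Gamma)$ with $\|\psi_n\|_{L^2(\Gamma)} = 1$ and $\|\psi_n\|_{H^{-1/2}(\Gamma)} \to 0$ (possible since the embedding $L^2(\Gamma)\hookrightarrow H^{-1/2}(\Gamma)$ is compact but not an isomorphism). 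Any $\mathcal{I}_\coarse$ that is bounded $H^{-1/2}(\Gamma)\to\PP^p(\TT_\coarse)$ must send $\psi_n \to 0$, so the left-hand side of the claimed estimate stays bounded away from zero while the right-hand side vanishes. Intuitively, the estimate would require controlling a (weighted) $L^2$-norm of $\psi$ by a weaker negative-order norm, which is impossible. You appear to have inverted the direction of the approximation estimate: the \emph{true} statement, which is what your efficiency argument correctly uses, is the dual one, namely $\|v\|_{H^{-1/2}(\Gamma)} \lesssim \|h_\coarse^{1/2} v\|_{L^2(\Gamma)}$ for $v \in L^2(\Gamma)$ with $v \perp \PP^0(\TT_\coarse)$ (proved by testing against $w\in H^{1/2}(\Gamma)$ and using a Cl\'ement-type operator into $\mathcal{S}^1(\TT_\coarse)$ acting on $w$, not on $\psi$). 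For reliability the paper instead avoids any duality on the test function entirely: it uses the Faermann localization~\eqref{eq:faermann1} together with Faermann's patch-Poincar\'e inequality (valid because $u - S\phi_\coarse \perp \PP^0(\TT_\coarse)$ via Galerkin orthogonality) to bound $\|u-S\phi_\coarse\|_{H^{1/2}(\Gamma)}^2$ by the sum of patch-local $H^{1/2}$-seminorms, and then the scaling estimate~\eqref{eq:H12_H1} (available because $u - S\phi_\coarse \in H^1(\Gamma)$) to pass to the weighted $L^2$-norm of the surface gradient. This localization is the crux of the reliability proof and cannot be replaced by the duality/quasi-interpolation argument as you sketched it.
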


\begin{remark}\label{rem:reliability}
We note that reliability, i.e., the first inequality of~\eqref{eq:reliability_indirect}, does not require the discrete inf-sup stability~\eqref{eq:dirichlet_indirect_cfie:discrete_inf-sup} and is in fact true for any (in general non-unique) $\phi_\coarse\in\PP^p(\TT_\coarse)$ satisfying \eqref{eq:dirichlet_indirect_cfie:discretization}.
Moreover, the proof of reliability does not exploit~\eqref{eq:assumption_inverse_new} but rather its implication~\eqref{eq:assumption_h1}. 
Similarly, without \eqref{eq:dirichlet_indirect_cfie:discrete_inf-sup}, weak efficiency, i.e., the second inequality of~\eqref{eq:reliability_indirect}, is still satisfied if the right-hand side is replaced by $\norm{\phi-\phi_\coarse}{H^{-1/2}(\Gamma)} +\norm{h_\coarse^{1/2} (\phi-\phi_\coarse)}{L^2(\Gamma)}$.
\end{remark}

\begin{proof}[Proof of reliability]
Since $S$ is an isomorphism, we can readily estimate the discretization error by the norm of the residual, i.e.,
\begin{align*}
	\norm{\phi - \phi_\coarse}{H^{-1/2}(\Gamma)} 
	\eqsim \norm{u - S\phi_\coarse}{H^{1/2}(\Gamma)}
\end{align*}
for all $\TT_\coarse\in\T$.
While the latter term is in principle computable, it involves a double integral over the boundary $\Gamma$ and does not provide any local information about the error.  
The localization argument of \cite{faermann00,faermann02} shows that
\begin{align}\label{eq:faermann1}
	\sum_{z\in\NN_\coarse} |v|_{H^{1/2}(\omega_\coarse(z))}^2 
	\lesssim 
	\norm{v}{H^{1/2}(\Gamma)}^2
	\lesssim
	\sum_{z\in\NN_\coarse} |v|_{H^{1/2}(\omega_\coarse(z))}^2 
	+ \norm{h_\coarse^{-1/2} v}{L^2(\Gamma)}^2
\end{align}
for all $v \in H^{1/2}(\Gamma)$.
If $v$ is orthogonal to the space of piecewise constants $\PP^0(\TT_\coarse)$, a Poincar\'e-type inequality of~\cite{faermann00,faermann02} gives that 
\begin{align*}
	\norm{h_\coarse^{-1/2} v}{L^2(\Gamma)}^2 
	\lesssim \sum_{z\in\NN_\coarse} |v|_{H^{1/2}(\omega_\coarse(z))}^2.
\end{align*}
Choosing $v = u - S\phi_\coarse$, we hence see the equivalence 
\begin{align*}
	\norm{u - S\phi_\coarse}{H^{1/2}(\Gamma)}^2
	\eqsim \sum_{z\in\NN_\coarse} |u - S\phi_\coarse|_{H^{1/2}(\omega_\coarse(z))}^2.
\end{align*}

Owing to~\eqref{eq:regularity_u}, the mapping properties
of $V_k$ and $K_k$ stated in~\eqref{eq:mapping_properties} with $\sigma = 1/2$,
and~\eqref{eq:assumption_h1}, it holds that $u - S\phi_\coarse \in H^1(\Gamma)$. 
As a result, a scaling argument allows to replace the Sobolev--Slobodeckij seminorm on patches by the $H^1$-seminorm on elements.
Indeed, it holds that 
\begin{align}\label{eq:H12_H1}
	|v|_{H^{1/2}(\omega_\coarse(z))} 
	\lesssim \norm{h_\coarse^{1/2} \nabla_\Gamma v}{L^2(\omega_\coarse(z))} 
	\quad \text{for all } v \in H^1(\Gamma) \text{ and }z \in \NN_\coarse;
\end{align}
see, e.g., \cite[Proposition~5.2.2]{gantner17} for a detailed proof. 
Owing to finite overlap of the patches, we get that 
\begin{align*}
	\norm{u - S\phi_\coarse}{H^{1/2}(\Gamma)}^2
	\lesssim \sum_{T\in\TT_\coarse} \norm{h_\coarse^{1/2} \nabla_\Gamma (u - S\phi_\coarse)}{L^2(T)}^2
	= \eta_\coarse^2,
\end{align*}
which proves the first inequality in~\eqref{eq:reliability_indirect}.
\end{proof}

\begin{proof}[Proof of weak efficiency]
We proceed similarly as in~\cite[Section~3.2.1]{affkmp17}, where weak efficiency is shown for
the Laplace problem. The definition of $S$ and the inverse estimates~\eqref{eq:invest_V},
\eqref{eq:invest_K}, and \eqref{eq:assumption_inverse_new} show that
\begin{align*}
	&\norm{h_\coarse^{1/2} \nabla_\Gamma (u-S\phi_\coarse)}{L^2(\Gamma)}
	=\norm{h_\coarse^{1/2} \nabla_\Gamma [V_k(\phi-\phi_\coarse) + i(K_k+1/2) M(\phi-\phi_\coarse)]}{L^2(\Gamma)}
	\\ 
	&\lesssim \norm{\phi-\phi_\coarse}{H^{-1/2}(\Gamma)} 
		+\norm{h_\coarse^{1/2} (\phi-\phi_\coarse)}{L^2(\Gamma)} 
		+ \norm{M(\phi-\phi_\coarse)}{H^{1/2}(\Gamma)}
		+ \norm{h_\coarse^{1/2}\nabla_\Gamma M(\phi-\phi_\coarse)}{L^2(\Gamma)} 
	\\
	&\lesssim \norm{\phi-\phi_\coarse}{H^{-1/2}(\Gamma)} 
		+\norm{h_\coarse^{1/2} (\phi-\phi_\coarse)}{L^2(\Gamma)}.
\end{align*}
Using that the Galerkin projection $\phi\mapsto\phi_\coarse$ is $H^{-1/2}(\Gamma)$-stable, one can now argue as in the proof of~\cite[Corollary~3.3]{affkmp17} to see that
\begin{align}\label{eq:approx_phi}
	\norm{\phi-\phi_\coarse}{H^{-1/2}(\Gamma)}
	\lesssim \norm{h_\coarse^{1/2} (\phi-\phi_\coarse)}{L^2(\Gamma)},
\end{align}
which concludes the proof.
\end{proof}

Combining the arguments from the proof of Theorem~\ref{thm:estimator_indirect} with standard compactness arguments even allows to derive the following estimates, which are robust with respect to high frequencies up to a term that has to be resolved by a sufficiently fine mesh size.
We note that similar arguments also apply for standard boundary integral equations provided that $k$ is not a spurious resonance.
\begin{corollary}\label{cor:asymptotic}
Let $M$ be a regularizing operator with~\eqref{eq:compact}, \eqref{eq:positive},
and \eqref{eq:assumption_inverse_new}. Then, there exist a constant $\const{rel,a}>0$
and a function
$\vartheta^k_{\rm rel,a}: \R_+ \to \R_+$ with $\lim_{h \to 0} \vartheta^k_{\rm rel,a}(h) = 0$
such that
\begin{align}\label{eq:reliability_indirect2}
\|\phi-\phi_\coarse\|_{H^{-1/2}(\Gamma)}
\le
\const{rel,a}\big(1+\vartheta^k_{\rm rel,a}(\norm{h_\coarse}{L^\infty(\Gamma)})\big)
\eta_\coarse 
\quad\text{for all }\TT_\coarse\in\T.
\end{align}
If additionally, $M \in \LL(H^{-1/2}(\Gamma),H^1(\Gamma))$,
then there also exist a constant $\const{eff,a}>0$ and a function
$\vartheta^k_{\rm eff,a}: \R_+ \to \R_+$ with $\lim_{h \to 0} \vartheta^k_{\rm eff,a}(h) = 0$
such that 
\begin{align}\label{eq:efficiency_indirect2}
\eta_\coarse
\le
\const{eff,a}\big(1+\vartheta^k_{\rm eff,a}(\norm{h_\coarse}{L^\infty(\Gamma)})\big)
\norm{h_\coarse^{1/2}(\phi-\phi_\coarse)}{L^2(\Gamma)}
\quad\text{for all }\TT_\coarse\in\T.
\end{align}
The constant $\const{rel,a}$ depends only on $d, \Gamma$, and $\TT_0$, while
$\const{eff,a}$ depends additionally on $p$. 
In particular, both $\const{rel,a}$ and $\const{eff,a}$ are independent of $k$ and $M$.
\end{corollary}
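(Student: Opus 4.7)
Both parts of the corollary follow by revisiting the proof of Theorem~\ref{thm:estimator_indirect} while isolating, inside each constant, the purely geometric piece (depending only on $d$, $\Gamma$, $\TT_0$, and for efficiency $p$) from a piece carrying $k$- and $M$-dependence that is absorbed into the vanishing correction $\vartheta^k$. The crucial replacement is to abandon the use of $S$ as an isomorphism $H^{-1/2}(\Gamma) \to H^{1/2}(\Gamma)$---whose inverse has $k$- and $M$-dependent operator norm---and instead rely on the $V_0^+$-ellipticity (with constant depending only on $\Gamma$), treating $T := S - V_0^+ = (V_k - V_0^+) + i(K_k + 1/2) M$ as a compact perturbation from $H^{-1/2}(\Gamma)$ to $H^{1/2}(\Gamma)$. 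Compactness of $V_k - V_0^+$ is classical; compactness of $(K_k + 1/2)M$ follows from~\eqref{eq:compact}.

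For~\eqref{eq:reliability_indirect2}, the bound $\|u - S\phi_\coarse\|_{H^{1/2}(\Gamma)} \lesssim \eta_\coarse$ established in the proof of Theorem~\ref{thm:estimator_indirect} already has $k$- and $M$-independent constants, since Faermann's localization, the Poincar\'e-type inequality on nodal patches, and the scaling~\eqref{eq:H12_H1} are purely geometric. To pass from this residual norm to $\|e\|_{H^{-1/2}(\Gamma)}$ with $e := \phi - \phi_\coarse$ and a $k$- and $M$-independent leading constant, I would run a Schatz-type argument starting from
\begin{equation*}
\|e\|_{H^{-1/2}(\Gamma)}^2 \lesssim {\rm Re}\,\dual{V_0^+ e}{e}_{L^2(\Gamma)} = {\rm Re}\,\dual{Se}{e}_{L^2(\Gamma)} - {\rm Re}\,\dual{Te}{e}_{L^2(\Gamma)}.
\end{equation*}
The first term is controlled by $\|u - S\phi_\coarse\|_{H^{1/2}(\Gamma)} \|e\|_{H^{-1/2}(\Gamma)}$; for the second, Galerkin orthogonality $\dual{Se}{\psi_\coarse}_{L^2(\Gamma)} = 0$ for $\psi_\coarse \in \PP^p(\TT_\coarse)$ combined with a standard compactness lemma giving $\|(I-\Pi_\coarse) T\|_{\LL(H^{-1/2}(\Gamma), H^{1/2}(\Gamma))} \to 0$ for suitable projections $\Pi_\coarse$ produces a bound of the form $\vartheta^k(\|h_\coarse\|_{L^\infty(\Gamma)}) \|e\|_{H^{-1/2}(\Gamma)}^2$. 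Dividing by $\|e\|_{H^{-1/2}(\Gamma)}$ and absorbing the vanishing term yields~\eqref{eq:reliability_indirect2}.

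For~\eqref{eq:efficiency_indirect2}, the inverse estimates~\eqref{eq:invest_V}--\eqref{eq:invest_K} already have $k$-independent constants, and under $M \in \LL(H^{-1/2}(\Gamma), H^1(\Gamma))$ one has $\|h_\coarse^{1/2} \nabla_\Gamma M e\|_{L^2(\Gamma)} \lesssim \|M e\|_{H^1(\Gamma)}$, with the resulting $M$-dependence pushed into $\vartheta^k$ via compactness of $M: H^{-1/2}(\Gamma) \to H^{1/2}(\Gamma)$. The reduction $\|e\|_{H^{-1/2}(\Gamma)} \lesssim \|h_\coarse^{1/2} e\|_{L^2(\Gamma)}$---obtained in Theorem~\ref{thm:estimator_indirect} through $H^{-1/2}$-stability of the $S$-Galerkin projection and~\cite[Corollary~3.3]{affkmp17}---is reproduced by the same $V_0^+$-ellipticity plus compactness scheme, now exploiting stability of the $V_0^+$-Galerkin projection (constant depending only on $\Gamma$ and $\TT_0$). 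The main obstacle is the clean execution of the compactness step: one needs a general convergence result stating that for any compact $T \in \LL(X, Y)$ and any family of projections $\Pi_\coarse: Y \to Y_\coarse$ with $\Pi_\coarse y \to y$ for every $y \in Y$, the norms $\|(I - \Pi_\coarse) T\|_{\LL(X,Y)}$ converge to $0$; the relevant projections must moreover have approximation properties in $H^{1/2}(\Gamma)$ that are independent of $k$ and $M$, so that the resulting $\vartheta^k$ absorbs all residual $k$- and $M$-dependence while the leading constants truly reduce to geometric quantities.
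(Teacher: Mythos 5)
Your high-level strategy is the same as the paper's: exchange the isomorphism $S$ for the $k$- and $M$-independent ellipticity of $V_0^+$, treat $T := S - V_0^+$ as a compact perturbation, and let a Schatz-type (Aubin--Nitsche) argument push all $k$- and $M$-dependence into a vanishing quantity $\vartheta^k$. That identification of the key ingredients is correct. However, the decisive step of the reliability argument is not carried out, and the form in which you leave it would not close.

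Concretely, you claim that Galerkin orthogonality ``combined with a standard compactness lemma giving $\|(I-\Pi_\coarse)T\|\to 0$ produces a bound of the form $\vartheta^k\,\|e\|_{H^{-1/2}(\Gamma)}^2$'' for $\dual{Te}{e}_{L^2(\Gamma)}$. Neither ingredient by itself, nor the two naively combined, gives this. If $\Pi_\coarse$ projects onto a finite-dimensional subspace of $H^{1/2}(\Gamma)$, then $\dual{(I-\Pi_\coarse)Te}{e}_{L^2(\Gamma)}$ is indeed small, but $\dual{\Pi_\coarse Te}{e}_{L^2(\Gamma)}$ does not vanish and Galerkin orthogonality ($\dual{Se}{\psi_\coarse}_{L^2(\Gamma)}=0$ for $\psi_\coarse\in\PP^p(\TT_\coarse)$) does not apply to it, since the orthogonality lives in the opposite slot. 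The missing idea is the conjugation by $S^{-1}$: writing $\dual{Te}{e}_{L^2(\Gamma)}=\dual{TS^{-1}(Se)}{e}_{L^2(\Gamma)}=\dual{Se}{(TS^{-1})'e}_{L^2(\Gamma)}=\dual{Se}{(TS^{-1})'e-\xi_\coarse}_{L^2(\Gamma)}$ for any $\xi_\coarse\in\PP^p(\TT_\coarse)$ brings $Se$ into the pairing and allows Galerkin orthogonality; the approximability of the compact image $(TS^{-1})'e$ then yields $|\dual{Te}{e}_{L^2(\Gamma)}|\le\vartheta(h)\,\norm{Se}{H^{1/2}(\Gamma)}\norm{e}{H^{-1/2}(\Gamma)}$. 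This is essentially the paper's computation (done there with $\norm{C_k e}{H^{1/2}(\Gamma)}$ and the dual witness $\xi^\star:=(C_k\circ S^{-1})'\psi^\star$). Note also that the $\vartheta\norm{e}^2$ shape you aim for forces an absorption ``$1-C\vartheta>0$'', which only holds on sufficiently fine meshes and therefore does not cover all $\TT_\coarse\in\T$ as claimed; the $\vartheta\norm{Se}\norm{e}$ shape (or the paper's triangle-inequality route through $\norm{V_0^+e}{H^{1/2}(\Gamma)}\le\norm{Se}{H^{1/2}(\Gamma)}+\norm{C_ke}{H^{1/2}(\Gamma)}$) gives the multiplicative $(1+\vartheta^k_{\rm rel,a})$ form directly, with no restriction on $h_\coarse$.

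On the efficiency side, the mechanism you quote is slightly off: the small factor does not come from compactness of $M:H^{-1/2}(\Gamma)\to H^{1/2}(\Gamma)$, but from the stronger mapping property $M\in\LL(H^{-1/2}(\Gamma),H^1(\Gamma))$ together with the elementary bound $\norm{h_\coarse^{1/2}\nabla_\Gamma Me}{L^2(\Gamma)}\le\norm{h_\coarse}{L^\infty(\Gamma)}^{1/2}\norm{Me}{H^1(\Gamma)}$; the explicit factor $\norm{h_\coarse}{L^\infty(\Gamma)}^{1/2}$ (together with the analogous estimate for $V_k-V_0$) is what makes the $k$- and $M$-dependent pieces vanish. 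Compactness of $M$ does not by itself produce an extra power of $h$. Where compactness genuinely enters on the efficiency side is, as in the paper, in making the $H^{-1/2}(\Gamma)$-stability constant of the Galerkin projection take the form $\const{gal,a}(1+\vartheta^k_{\rm gal,a}(h))$.
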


\begin{proof}[Proof of reliability]
In this proof, hidden constants are independant of $k$.
As already pointed out above, the operator
\begin{equation*}
C_k := S-V_0^+: H^{-1/2}(\Gamma) \to H^{1/2}(\Gamma)
\end{equation*}
is compact.
Since $S^{-1}: H^{1/2}(\Gamma) \to H^{-1/2}(\Gamma)$ is a continuous operator, the dual operator
$(C_k\circ S^{-1})': H^{-1/2}(\Gamma) \to H^{-1/2}(\Gamma)$ is compact
as well, and standard arguments ensure that $\lim_{h \to 0} \vartheta^k_{\rm rel,a}(h) = 0$ for
\begin{equation*}
\vartheta^k_{\rm rel,a}(h)
:=
\max_{\substack{\TT_\fine \in \T \\ h_\fine \leq h}}
\max_{\substack{\psi \in H^{-1/2}(\Gamma) \\ \norm{\psi}{H^{-1/2}(\Gamma)} = 1}}
\min_{\xi_\fine \in \PP^p(\TT_\fine)}
\norm{(C_k \circ S^{-1})' \psi-\xi_\fine}{H^{-1/2}(\Gamma)}.
\end{equation*}

Ellipticity of $V_0^+$ and the triangle inequality show that
\begin{equation*}
\norm{\phi-\phi_\coarse}{H^{-1/2}(\Gamma)}
\eqsim
\norm{V_0^+(\phi-\phi_\coarse)}{H^{1/2}(\Gamma)}
\le
\norm{S(\phi-\phi_\coarse)}{H^{1/2}(\Gamma)}
+
\norm{C_k(\phi-\phi_\coarse)}{H^{1/2}(\Gamma)}.
\end{equation*}
We have already established in the proof of Theorem~\ref{thm:estimator_indirect} that
\begin{equation*}
\norm{S(\phi-\phi_\coarse)}{H^{1/2}(\Gamma)}
=
\norm{u-S\phi_\coarse}{H^{1/2}(\Gamma)}
\lesssim
\eta_\coarse,
\end{equation*}
and we therefore focus on the second term.
There exists $\psi^\star \in H^{-1/2}(\Gamma)$ with $\norm{\psi^\star}{H^{-1/2}(\Gamma)} = 1$
such that
\begin{equation*}
\norm{C_k(\phi-\phi_\coarse)}{H^{1/2}(\Gamma)}
=
\dual{C_k(\phi-\phi_\coarse)}{\psi^\star}_{L^2(\Gamma)}.
\end{equation*}
With $\xi^\star := (C_k \circ S^{-1})' \psi^\star \in H^{-1/2}(\Gamma)$,
Galerkin orthogonality gives that
\begin{equation*}
\norm{C_k(\phi-\phi_\coarse)}{H^{1/2}(\Gamma)}
=
\dual{S(\phi-\phi_\coarse)}{\xi^\star}_{L^2(\Gamma)}
=
\dual{S(\phi-\phi_\coarse)}{\xi^\star-\xi_\coarse}_{L^2(\Gamma)}
\end{equation*}
for all $\xi_\coarse \in \PP^p(\TT_\coarse)$.
It follows that
\begin{equation*}
\norm{C_k(\phi-\phi_\coarse)}{H^{1/2}(\Gamma)}
\leq
\norm{S(\phi-\phi_\coarse)}{H^{1/2}(\Gamma)}
\min_{\xi_\coarse \in \PP^p(\TT_\coarse)}\norm{\xi^\star-\xi_\coarse}{H^{-1/2}(\Gamma)}
\lesssim
\vartheta^k_{\rm rel,a}(\norm{h_\coarse}{L^\infty(\Gamma)}) \eta_\coarse,
\end{equation*}
from which we infer the desired result.
\end{proof}

\begin{proof}[Proof of weak efficiency]
With $e_\coarse := \phi-\phi_\coarse$, we have that
\begin{equation*}
\eta_\coarse
=
\norm{h_\coarse^{1/2}\nabla_{\Gamma}(u-S\phi_\coarse)}{L^2(\Gamma)}
=
\norm{h_\coarse^{1/2}\nabla_{\Gamma}Se_\coarse}{L^2(\Gamma)},
\end{equation*}
and it follows from the definition of $S$ that
\begin{equation*}
\eta_\coarse
\leq
\norm{h_\coarse^{1/2}\nabla_{\Gamma}V_0 e_\coarse}{L^2(\Gamma)}
+
\norm{h_\coarse^{1/2}\nabla_{\Gamma}(V_k-V_0) e_\coarse}{L^2(\Gamma)}
+
\norm{h_\coarse^{1/2}\nabla_{\Gamma}(K_k+1/2) M e_\coarse}{L^2(\Gamma)},
\end{equation*}
and we now treat each term independently. First, we use the inverse estimate~\eqref{eq:invest_V}
for $k=0$, which gives that
\begin{equation*}
\|h_\coarse^{1/2} \nabla_{\Gamma} V_0 e_\coarse\|_{L^2(\Gamma)}
\lesssim
\|e_\coarse\|_{H^{-1/2}(\Gamma)} + \|h_\coarse^{1/2} e_\coarse\|_{L^2(\Gamma)},
\end{equation*}
where the hidden constant is independent of $k$.
We next employ the fact that $V_k-V_0$ continuously maps $H^{-1/2}(\Gamma)$
into $H^1(\Gamma)$, see, e.g., \cite[Proof of Theorem~2]{bbhp19}, so that
\begin{equation*}
\norm{h_\coarse^{1/2}\nabla_{\Gamma}(V_k-V_0) e_\coarse}{L^2(\Gamma)}
\leq
\norm{h_\coarse^{1/2}}{L^\infty(\Gamma)}
\norm{\nabla_{\Gamma}(V_k-V_0) e_\coarse}{L^2}(\Gamma)
\lesssim
\norm{h_\coarse^{1/2}}{L^\infty(\Gamma)} \norm{e_\coarse}{H^{-1/2}(\Gamma)}.
\end{equation*}
The inverse estimate~\eqref{eq:invest_K} and the assumption
$M \in\LL(H^{-1/2}(\Gamma),H^1(\Gamma))$ give that
\begin{align*}
\norm{h_\coarse^{1/2}\nabla_{\Gamma}(K_k+1/2) M e_\coarse}{L^2(\Gamma)}
\lesssim
\norm{h_\coarse^{1/2}\nabla_{\Gamma}M e_\coarse}{L^2(\Gamma)}
\lesssim
\norm{h_\coarse^{1/2}}{L^\infty(\Gamma)}\norm{e_\coarse}{H^{-1/2}(\Gamma)}.
\end{align*}

Finally, we use again inequality~\eqref{eq:approx_phi}. Here, a standard argument shows
that the employed stability constant of the Galerkin projection $\phi\mapsto\phi_\coarse$
as mapping from $H^{-1/2}(\Gamma)$ onto itself is also of the form 
$\const{gal,a} (1 + \vartheta^k_{\rm gal,a}(\norm{h_\coarse}{L^\infty(\Gamma)}))$
for some $k$-independent $\const{gal,a}>0$ and some $\vartheta^k_{\rm gal,a}:\R_+\to\R_+$
with $\lim_{h\to 0} \vartheta^k_{\rm gal,a}(h) = 0$. This concludes the proof.
\end{proof}

\subsection{Adaptive algorithm}\label{sec:dirichlet_indirect_cfie:algorithm}

We abbreviate the weighted-residual error indicators by
\begin{align}\label{eq:dirichlet_indirect_cfie:indicators}
	\eta_\coarse(T)^2 := \norm{h_\coarse^{1/2} \nabla_\Gamma (u - S\phi_\coarse)}{L^2(T)}^2
	\quad \text{for all } T\in\TT_\coarse.
\end{align}
Moreover, we set
\begin{align*}
	\eta_\coarse(\UU_\coarse)^2 := \sum_{T \in \UU_\coarse} \eta_\coarse(T)^2
	\quad \text{for all } \UU_\coarse \subseteq \TT_\coarse.
\end{align*}
Note that $\eta_\coarse = \eta_\coarse(\TT_\coarse)$. 
We can use the weighted-residual error estimator within a standard adaptive algorithm to steer the local mesh refinement of some given sufficiently fine conforming initial triangulation $\TT_0$ such that the discrete inf-sup stability~\eqref{eq:dirichlet_indirect_cfie:discrete_inf-sup} is satisfied.  

\begin{algorithm}\label{alg:dirichlet_indirect_cfie}
\textbf{Input:} D\"orfler parameter $0 < \theta \le 1$. 
\\
\textbf{Loop:} For each $\ell=0,1,2,\dots$, iterate the following steps:
\begin{enumerate}[\rm(i)]
\item Compute the Galerkin approximation $\phi_\ell \in \PP^p(\TT_\ell)$ by solving~\eqref{eq:dirichlet_indirect_cfie:discretization}.
\item Compute the refinement indicators $\eta_\ell({T})$ from~\eqref{eq:dirichlet_indirect_cfie:indicators} for all elements ${T}\in\TT_\ell$.
\item Determine a minimal\footnote{More precisely, we choose $\MM_\ell\subseteq\TT_\ell$ with $\theta\,\eta_\ell^2 \le \eta_\ell(\UU_\ell)^2$ such that $\#\MM_\ell \le \#\UU_\ell$ for all $\UU_\ell \subseteq \TT_\ell$ with $\theta\,\eta_\ell^2 \le \eta_\ell(\UU_\ell)^2$. Note that this choice might not be unique.} set of marked elements $\MM_\ell\subseteq\TT_\ell$ 
satisfying the D\"orfler marking
\begin{align*}
	\theta\,\eta_\ell^2 \le \eta_\ell(\MM_\ell)^2.
\end{align*}%
\item Generate the refined triangulation $\TT_{\ell+1}:=\refine(\TT_\ell,\MM_\ell)$. %
\end{enumerate}
\textbf{Output:} Refined triangulations $\TT_\ell$, corresponding Galerkin approximations $\phi_\ell \in \PP^p(\TT_\ell)$, and 
weighted-residual error estimators $\eta_\ell$ for all $\ell \in \N_0$.
\end{algorithm}

\begin{remark}\label{rem:vienna_cheat}
In \cite{bbhp19}, which investigates adaptive BEM for the standard integral equation~\eqref{eq:bie_first_kind}, the assumption that $\TT_0$ is sufficiently fine is circumvented by an alternative adaptive algorithm: 
If the discrete system on level $\ell$ is not solvable, then one uniform refinement is performed. 
Moreover, in each step an extended D\"orfler marking is used, which additionally marks the largest elements in the current mesh. 
Hence, that algorithm eventually guarantees the inf-sup condition~\eqref{eq:dirichlet_indirect_cfie:discrete_inf-sup} for all $\TT_\coarse\in\refine(\TT_L)$ for a sufficiently large $L\in\N_0$. 
The analysis of the present manuscript also covers the analogous version of that algorithm. 
However, we stress that it is unclear in practice when a discrete system is truly non-solvable and all convergence results only hold starting from level $L$. 
\end{remark}

\subsection{Optimal convergence} 
We set
\begin{align*}
 \T(N):=\set{\TT_\coarse\in\T}{\#\TT_\coarse-\#\TT_0\le N}
 \quad\text{for all }N\in\N_0
\end{align*}
and, for all $s>0$,
\begin{align}\label{eq:linconv_dirichlet_indirect_cfie}
\norm{\phi}{\mathbb{A}_s^{\rm est}}
 := \sup_{N\in\N_0}\min_{\TT_\coarse\in\T(N)}(N+1)^s\,\eta_\coarse\in[0,\infty].
\end{align}
We say that the solution $\phi\in H^{-1/2}(\Gamma)$ lies in the \textit{approximation class $s$ with respect to the estimator} if
\begin{align}\label{eq:optconv_dirichlet_indirect_cfie}
\norm{\phi}{\mathbb{A}_s^{\rm est}} <\infty.
\end{align}
By definition, $\norm{\phi}{\mathbb{A}^{\rm est}_s}<\infty$  implies that the error estimator $\eta_\coarse$
on the optimal triangulations $\TT_\coarse$ decays at least with rate $\OO\big((\#\TT_\coarse)^{-s}\big)$. 
The following theorem states that each possible rate $s>0$ is indeed be realized by Algorithm~\ref{alg:dirichlet_indirect_cfie}. 
The proof is  given in  Section~\ref{sec:axioms_dirichlet_indirect_cfie}, where we will verify the so-called \textit{axioms of adaptivity} from \cite{cfpp14} for the error estimator, which automatically imply the desired result. 


\begin{theorem}\label{thm:dirichlet_indirect_cfie}
Suppose that 
$M$ satisfies~\eqref{eq:compact},~\eqref{eq:positive} and~\eqref{eq:assumption_inverse_new}, 
and that the discrete inf-sup stability~\eqref{eq:dirichlet_indirect_cfie:discrete_inf-sup} is satisfied.
Let $(\TT_\ell)_{\ell\in\N_0}$ be the sequence of triangulations generated by Algorithm~\ref{alg:dirichlet_indirect_cfie}.
Then, for arbitrary $0<\theta\le1$, the estimator converges linearly, i.e., there exist constants $0<\rho_{\rm lin}<1$ and $\const{lin}\ge1$ such that
\begin{align*}
\eta_{\ell+j}^2\le \const{lin}\rho_{\rm lin}^j\eta_\ell^2\quad\text{for all }j,\ell\in\N_0.
\end{align*}
Moreover, there exists a constant $0<\theta_{\rm opt}\le1$ such that for all $0<\theta<\theta_{\rm opt}$, the estimator converges at optimal rate, i.e., for all $s>0$, there exist constants $c_{\rm opt},\const{opt}>0$ such that
\begin{align*}
 c_{\rm opt}\norm{\phi}{\mathbb{A}_s^{\rm est}}
 \le \sup_{\ell\in\N_0}{(\# \TT_\ell-\#\TT_0+1)^{s}}\,{\eta_\ell}
 \le \const{opt}\norm{\phi}{\mathbb{A}_s^{\rm est}}.
\end{align*}
The constant $\theta_{\rm opt}$ depends only on 
$d, k, \Gamma, M, \TT_0$, and $p$, while $\const{lin},\rho_{\rm lin}$ depend additionally on $\theta$, and $\const{opt}$ depends furthermore on $s>0$.
The constant $c_{\rm opt}$ depends only on $d$, $\#\TT_0$,  $s$, and if there exists $\ell_0$ with $\eta_{\ell_0}=0$, then also on $\ell_0$ and $\eta_0$.
\end{theorem}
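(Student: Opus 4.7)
My plan is to verify the \emph{axioms of adaptivity} from \cite{cfpp14} — stability on non-refined elements, estimator reduction on refined elements, general quasi-orthogonality, and discrete reliability — so that linear convergence and optimal rates then follow directly from the abstract framework of \cite{cfpp14}. The refinement properties \eqref{item:child}–\eqref{item:closure} together with the mesh-closure estimate are already available.

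\textbf{Stability and reduction.} Fix $\TT_\fine \in \refine(\TT_\coarse)$ with discrete solutions $\phi_\coarse,\phi_\fine$. On $\TT_\fine \cap \TT_\coarse$ one has $h_\fine = h_\coarse$, so
\begin{align*}
\bigl|\eta_\fine(\UU) - \eta_\coarse(\UU)\bigr|
\le \norm{h_\coarse^{1/2}\nabla_\Gamma S(\phi_\fine-\phi_\coarse)}{L^2(\bigcup\UU)}
\end{align*}
for any $\UU\subseteq\TT_\fine\cap\TT_\coarse$. Writing $S = V_k + i(K_k+1/2)M$ and invoking the inverse estimates \eqref{eq:invest_V}, \eqref{eq:invest_K}, and \eqref{eq:assumption_inverse_new}, together with the discrete inverse estimate \eqref{eq1:discrete_invest} applied to $\phi_\fine-\phi_\coarse\in \PP^p(\TT_\fine)$, yields stability
\begin{align*}
\bigl|\eta_\fine(\UU)-\eta_\coarse(\UU)\bigr|
\lesssim \norm{\phi_\fine-\phi_\coarse}{H^{-1/2}(\Gamma)}.
\end{align*}
For the reduction property, I use that bisection guarantees $h_\fine^{d-1}\le \tfrac12 h_\coarse^{d-1}$ on each refined element, hence $h_\fine \le q_{\rm red}\, h_\coarse$ pointwise on $\bigcup(\TT_\coarse\setminus\TT_\fine)$ with $q_{\rm red}:=2^{-1/(d-1)}<1$. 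Combining this pointwise bound with the triangle inequality and the same inverse estimates as above gives
\begin{align*}
\eta_\fine(\TT_\fine\setminus\TT_\coarse)^2
\le q_{\rm red}\,\eta_\coarse(\TT_\coarse\setminus\TT_\fine)^2
+ \const{}\, \norm{\phi_\fine-\phi_\coarse}{H^{-1/2}(\Gamma)}^2.
\end{align*}

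\textbf{Discrete reliability.} I would repeat the reliability argument from the proof of Theorem~\ref{thm:estimator_indirect}, but localized to the refined region. Since $\phi_\fine-\phi_\coarse \in \PP^p(\TT_\fine) \subset H^{-1/2}(\Gamma)$, the discrete inf-sup condition \eqref{eq:dirichlet_indirect_cfie:discrete_inf-sup} combined with Galerkin orthogonality implies
\begin{align*}
\norm{\phi_\fine-\phi_\coarse}{H^{-1/2}(\Gamma)}
\lesssim \norm{S\phi_\coarse-u}{H^{1/2}(\omega)},
\end{align*}
where $\omega$ is the union of patches of $\TT_\fine$ around the refined elements; cf.\ the node-based localization \eqref{eq:faermann1} together with the Poincaré-type inequality used in the proof of Theorem~\ref{thm:estimator_indirect}. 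Applying the scaling \eqref{eq:H12_H1} then yields $\norm{\phi_\fine-\phi_\coarse}{H^{-1/2}(\Gamma)}\lesssim \eta_\coarse(\RR_\coarse)$, where $\RR_\coarse$ is a shape-regular neighborhood of $\TT_\coarse\setminus\TT_\fine$ with $\#\RR_\coarse\lesssim \#(\TT_\coarse\setminus\TT_\fine)$.

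\textbf{General quasi-orthogonality — the main obstacle.} Since $S$ is only an isomorphism and not elliptic in the $H^{-1/2}$ inner product, we do not have Galerkin orthogonality in a coercive bilinear form, and plain Pythagorean identities fail. Following the strategy of \cite{bbhp19}, I would first establish the a priori convergence $\phi_\ell \to \phi_\infty$ in $H^{-1/2}(\Gamma)$ along the adaptive sequence, where $\phi_\infty$ lives in the limit space $\overline{\bigcup_\ell \PP^p(\TT_\ell)}$. Combining the C\'ea lemma \eqref{eq:dirichlet_indirect_cfie:cea} with a standard density/compactness argument for Galerkin approximations of Fredholm-type equations (using \eqref{eq:dirichlet_indirect_cfie:discrete_inf-sup}) shows $\phi_\infty = \phi$. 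By reliability this also gives $\eta_\ell\to 0$, which, using the triangle inequality and stability, implies the general quasi-orthogonality in the sense of \cite[Section 3.2]{cfpp14}:
\begin{align*}
\sum_{j=\ell}^{\ell+N}\norm{\phi_{j+1}-\phi_j}{H^{-1/2}(\Gamma)}^2
\le \const{qo}\,\eta_\ell^2 \quad \text{for all }\ell, N.
\end{align*}

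\textbf{Conclusion.} With the four axioms verified, \cite[Theorem~4.1]{cfpp14} gives linear convergence $\eta_{\ell+j}^2\le \const{lin}\rho_{\rm lin}^j\eta_\ell^2$ for arbitrary $0<\theta\le 1$, and \cite[Theorem~4.5]{cfpp14} gives the optimal rate result for all $\theta<\theta_{\rm opt}$, where $\theta_{\rm opt}$ depends only on the constants arising in stability, reduction, and discrete reliability. The constants $\const{lin},\rho_{\rm lin},\const{opt}$ inherit their dependencies from these axioms. The lower bound $c_{\rm opt}\norm{\phi}{\mathbb{A}_s^{\rm est}}\le \sup_\ell(\#\TT_\ell-\#\TT_0+1)^s\eta_\ell$ is obtained by the standard argument as in \cite[Section~4.2]{cfpp14}, where the closure estimate \eqref{item:closure} bounds the number of mesh elements by the number of previously marked ones and the behavior of the first step where $\eta_\ell$ vanishes is absorbed in the constant through $\ell_0,\eta_0$.
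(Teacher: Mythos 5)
Your overall strategy — verifying the four axioms of adaptivity from \cite{cfpp14} and then invoking the abstract linear-convergence and optimality theorems — is exactly the route the paper takes, and your treatments of stability~\eqref{item:stability} and reduction~\eqref{item:reduction} coincide with the paper's (modulo minor notational points: the paper states reduction in non-squared form with $\rho_{\rm red}=2^{-1/(2(d-1))}$, while your squared form with $q_{\rm red}=2^{-1/(d-1)}$ needs a Young's inequality to absorb the cross term and still yield a contraction factor strictly below one). Your sketch of discrete reliability~\eqref{item:discrete_reliability} is looser than the paper's: the localization is not achieved by literally measuring the residual in ``$\norm{S\phi_\coarse - u}{H^{1/2}(\omega)}$'' (which is not well-defined as a subdomain norm for $H^{1/2}$), but by choosing $\psi_\coarse$ as the restriction of $\psi_\fine$ to the unrefined elements, inserting a continuous cut-off function $\widetilde\chi_\omega$ built from hat functions, and then using the Faermann localization, a Poincar\'e inequality, the scaling~\eqref{eq:H12_H1}, and the discrete inverse estimate. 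Your sketch does reference the right ingredients, so I would call this a gap in detail rather than in idea.

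The genuine divergence from the paper is the treatment of general quasi-orthogonality~\eqref{item:orthogonality}. You propose to establish {\sl a priori} convergence $\phi_\ell \to \phi_\infty = \phi$ by a compactness argument and then deduce quasi-orthogonality, following \cite{bbhp19} (equivalently \cite{ffp14,bhp17}). The paper deliberately does \emph{not} do this: it instead invokes the result of \cite{feischl22}, which derives general quasi-orthogonality directly from the discrete inf-sup condition~\eqref{eq:dirichlet_indirect_cfie:discrete_inf-sup} with $\const{qo}(n)=o(n)$, followed by the post-processing of \cite[Remark~20]{feischl22} or \cite[Proposition~4.11]{cfpp14} to get a uniform $\const{qo}\simeq1$. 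The difference matters for the precise statement of Theorem~\ref{thm:dirichlet_indirect_cfie}: as the paper itself points out in Remark~\ref{rem:dirichlet_indirect_cfie:cea}, the qualitative compactness route yields a constant $\const{qo}$ that depends on the discrete solution sequence $(\phi_\ell)_{\ell\in\N_0}$, whereas the theorem asserts that $\const{lin},\rho_{\rm lin},\const{opt}$ depend only on $d,k,\Gamma,M,\TT_0,p,\theta,s$. So your argument does prove linear convergence and rate optimality, but it does not establish the claimed independence of the constants from the adaptive sequence; to recover that, you would need to replace your compactness-based derivation of quasi-orthogonality by the quantitative argument of \cite{feischl22}.
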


\begin{remark}
We can slightly lighten the assumptions on the regularizing operator $M$
in Theorem~\ref{thm:dirichlet_indirect_cfie}. 
It suffices to suppose \eqref{eq:assumption_inverse_new} for $\psi \in \PP^p(\TT_\coarse)$ instead of $\psi \in L^2(\Gamma)$. 
With the discrete inverse estimate~\eqref{eq1:discrete_invest}, \eqref{eq:assumption_inverse_new} can thus be replaced by
\begin{align}
\label{eq:assumption_inverse}
	\norm{h_\coarse^{1/2} \nabla_\Gamma M\psi_\coarse}{L^2(\Gamma)}
	\lesssim \norm{\psi_\coarse}{H^{-1/2}(\Gamma)}
	\quad\text{for all }\psi_\coarse \in \PP^p(\TT_\coarse) \text{ and all }\TT_\coarse \in \T.
\end{align}
We additionally note that in fact,~\eqref{eq:assumption_inverse} is only required
for the axioms \eqref{item:stability} and \eqref{item:reduction} below. The other
axioms (namely~\eqref{item:orthogonality} and~\eqref{item:discrete_reliability})
only require~\eqref{eq:compact} and~\eqref{eq:positive}.
\end{remark}

\subsection{Proof of Theorem~\ref{thm:dirichlet_indirect_cfie}} \label{sec:axioms_dirichlet_indirect_cfie}
As already announced, we only need to verify the axioms of adaptivity from~\cite{cfpp14}. 
Then, the abstract result \cite[Theorem 4.1]{cfpp14} immediately implies linear convergence~\eqref{eq:linconv_dirichlet_indirect_cfie} at optimal rate~\eqref{eq:optconv_dirichlet_indirect_cfie}. 
We mention that the required axioms for the mesh refinement have already been stated in \eqref{item:child}--\eqref{item:closure}. 
So we only need to show that there exist uniform constants $\const{stab},\const{red},\const{drel},\const{qo}>0$ and $0<\rho_{\rm red}<1$ such that the following axioms of adaptivity for the estimator hold for all $\TT_\coarse\in \T$, $\TT_\fine \in \refine(\TT_\coarse)$ with corresponding patch of all refined elements $\RR_{\coarse,\fine} := \set{T\in\TT_\coarse}{\overline T \cap \overline{\bigcup(\TT_\coarse \setminus \TT_\fine)} \neq \emptyset}$, the sequence $(\TT_\ell)_{\ell\in\N_0}$ of Algorithm~\ref{alg:dirichlet_indirect_cfie}, and all $\ell,n\in\N_0$:

\begin{enumerate}[(1)]
\renewcommand{\theenumi}{A\arabic{enumi}}
\item Stability on non-refined elements: \label{item:stability}
	$|\eta_\fine(\TT_\fine \cap \TT_\coarse) - \eta_\coarse(\TT_\coarse \cap \TT_\fine)| 
	\le \const{stab} \norm{\phi_\fine - \phi_\coarse}{H^{-1/2}(\Gamma)}$;
\item Reduction on refined elements: \label{item:reduction}
	$\eta_\fine(\TT_\fine \setminus \TT_\coarse)
	\le \rho_{\rm red} \eta_\coarse(\TT_\coarse \setminus \TT_\fine)  
		+ \const{red} \norm{\phi_\fine - \phi_\coarse}{H^{-1/2}(\Gamma)}$;
\item General quasi-orthogonality: \label{item:orthogonality}
	$\sum_{j=\ell}^{\ell + n} \norm{\phi_{j+1} - \phi_j}{H^{-1/2}(\Gamma)}^2 
	\le \const{qo}^2 \eta_\ell^2$; 
\item Discrete reliability: \label{item:discrete_reliability}
	$\norm{\phi_\fine - \phi_\coarse}{H^{-1/2}(\Gamma)}
	\le \const{drel} \eta_\coarse(\RR_{\coarse,\fine}).$
\end{enumerate}

To show the axioms of adaptivity, we can proceed similarly as in~\cite{bbhp19}, where standard boundary integral equations for the Helmholtz problem are treated.
In addition, we crucially exploit the inverse estimate~\eqref{eq:assumption_inverse_new} or rather its implication~\eqref{eq:discrete_invest}.
Moreover, instead of building on \eqref{item:orthogonality} derived in~\cite{bbhp19}, we obtain \eqref{item:orthogonality} from~\cite{bbhp19}.
In contrast to the argument of \cite{bbhp19}, this results in constants that do not depend on the sequence of Galerkin approximations $(\phi_\ell)_{\ell\in\N_0}$; see also Remark~\ref{rem:dirichlet_indirect_cfie:cea}.

\begin{proof}[Proof of stability on non-refined elements~\eqref{item:stability}]
Let $\TT_\coarse\in\T$ and $\TT_\fine\in\refine(\TT_\coarse)$. 
The inverse triangle inequality and the fact that $h_\coarse = h_\fine$ on $\omega := \bigcup (\TT_\coarse \cap \TT_\fine)$ show that 
\begin{align*}
	\big|\norm{h_\fine^{1/2} \nabla_\Gamma (u - S\phi_\fine)}{L^2(\omega)} - \norm{h_\coarse^{1/2} \nabla_\Gamma (u - S\phi_\coarse)}{L^2(\omega)}\big|
	\le \norm{h_\fine^{1/2} \nabla_\Gamma S(\phi_\fine - \phi_\coarse)}{L^2(\Gamma)}.
\end{align*}
The inverse estimates~\eqref{eq:invest_V}--\eqref{eq:invest_K}, \eqref{eq:discrete_invest}, and \eqref{eq:assumption_inverse} on the mesh $\TT_\fine$ imply that 
\begin{align*}
	\norm{h_\fine^{1/2} \nabla_\Gamma S(\phi_\fine - \phi_\coarse)}{L^2(\Gamma)} 
	&\lesssim \norm{\phi_\fine - \phi_\coarse}{H^{-1/2}(\Gamma)} + \norm{M(\phi_\fine - \phi_\coarse)}{H^{1/2}(\Gamma)} 
	\\&\quad
		+ \norm{h_\coarse^{1/2} \nabla_\Gamma M(\phi_\fine-\phi_\coarse)}{L^2(\Gamma)}
	\\
	&\lesssim \norm{\phi_\fine - \phi_\coarse}{H^{-1/2}(\Gamma)}.
\end{align*}
This concludes the proof.
\end{proof}

\begin{proof}[Proof of reduction on refined elements~{\rm \eqref{item:reduction}}]
Let $\TT_\coarse\in\T$ and $\TT_\fine\in\refine(\TT_\coarse)$. 
The triangle inequality and the fact that $h_\fine^{1/2} \le \rho h_\coarse^{1/2}$ on $\omega := \bigcup (\TT_\fine \setminus \TT_\coarse) = \bigcup (\TT_\coarse \setminus \TT_\fine)$ for some $0<\rho<1$ show that 
\begin{align*}
	\norm{h_\fine^{1/2} \nabla_\Gamma (u - S\phi_\fine)}{L^2(\omega)} 
	\le \rho \norm{h_\coarse^{1/2} \nabla_\Gamma (u - S\phi_\coarse)}{L^2(\omega)}
	+
	 \norm{h_\fine^{1/2} \nabla_\Gamma S(\phi_\fine - \phi_\coarse)}{L^2(\Gamma)}.
\end{align*}
The second term has already been estimated in the proof of \eqref{item:stability}. 
\end{proof}

\begin{proof}[Proof of general quasi-orthogonality~{\rm \eqref{item:orthogonality}}]
By discrete inf-sup stability~\eqref{eq:dirichlet_indirect_cfie:discrete_inf-sup}, \cite{feischl22} is applicable and directly implies this axiom with $\const{qo}=\const{qo}(n)=o(n)$ depending sublinearly on the integer~$n$.
The post-processing argument~\cite[Remark~20]{feischl22} or \cite[Proposition~4.11]{cfpp14} then yield the axiom with a uniform constant~$\const{qo}\simeq 1$. 
\end{proof}

\begin{remark}\label{rem:dirichlet_indirect_cfie:cea}
Since $S$ is symmetric and elliptic up to some compact perturbation, general quasi-orthogonality~{\rm \eqref{item:orthogonality}} alternatively follows as in \cite[Section~3--4]{ffp14} and \cite[Proposition~4.11]{cfpp14}; see also the abstract framework of~\cite[Section~3.3]{bhp17}. 
The drawback of this approach is that the resulting $\const{qo}$ depends on the sequence of Galerkin approximations $(\phi_\ell)_{\ell\in\N_0}$.  

That said, the underlying analysis allows to verify Corollary~\ref{cor:asymptotic} along the mesh sequence $(\TT_\ell)_{\ell\in\N_0}$.
More precisely, reliability~\eqref{eq:reliability_indirect2} and weak efficiency~\eqref{eq:efficiency_indirect2} hold for $\vartheta^k_{\rm rel,a}(\norm{h_\ell}{L^\infty(\Gamma)})$ and $\vartheta^k_{\rm eff,a}(\norm{h_\ell}{L^\infty(\Gamma)})$ replaced by some 
$\vartheta^{k,\ell}_{\rm rel,a}>0$ and $\vartheta^{k,\ell}_{\rm eff,a}>0$ which both converge to $0$ as $\ell\to\infty$. 
To see this, one can repeat the arguments of Corollary~\ref{cor:asymptotic} and exploit the crucial result from \cite{bhp17} that the normed error sequence $\big(\tfrac{\phi - \phi_\ell}{\norm{\phi-\phi_\ell}{H^{-1/2}(\Gamma)}}\big)_{\ell\in\N_0}$ converges weakly to $0$ as well as the fact that compact operators turn weak convergence into strong convergence; see \cite[Theorem~20]{bhp17} for a similar argument used to prove an asymptotically constant-free C\'ea lemma. 
The proof of weak efficiency now additionally employs that $V_k-V_0:H^{-1/2}(\Gamma)\to H^1(\Gamma)$ is even compact and further requires that $M:H^{-1/2}(\Gamma)\to H^1(\Gamma)$ is compact.
\end{remark}

\begin{proof}[Proof of discrete reliability~\eqref{item:discrete_reliability}] 
The proof follows as for the weakly-singular integral equation of the Laplace operator~\cite{fkmp13,gantumur13}  or the standard weakly-singular integral equation for the Helmholtz operator~\cite{bbhp19}, 
and is only given for the sake of completeness. 
Discrete inf-sup stability~\eqref{eq:dirichlet_indirect_cfie:discrete_inf-sup} and the definition of $\phi_\fine$ as well $\phi_\coarse$ (see~\eqref{eq:dirichlet_indirect_cfie:discretization}) show for all $\psi_\coarse\in \PP^p(\TT_\coarse)$  that 
\begin{align*}
	\norm{\phi_\fine - \phi_\coarse}{H^{-1/2}(\Gamma)}
	\lesssim \sup_{\psi_\fine \in \PP^p(\TT_\fine)}  \frac{|\dual{S(\phi_\fine - \phi_\coarse)}{\psi_\fine}_{L^2(\Gamma)}|}{\norm{\psi_\fine}{H^{-1/2}(\Gamma)}}
	= \sup_{\psi_\fine \in \PP^p(\TT_\fine)}  \frac{|\dual{S(\phi - \phi_\coarse)}{\psi_\fine - \psi_\coarse}_{L^2(\Gamma)}|}{\norm{\psi_\fine}{H^{-1/2}(\Gamma)}}.
\end{align*}
We choose
\begin{align}\label{eq:psi_drel}
	\psi_\coarse 
	:= \begin{cases} 0 &\text{ on } \bigcup (\TT_\coarse\setminus \TT_\fine),
	\\
	\psi_\fine &\text{ on } \bigcup (\TT_\coarse\cap \TT_\fine).
	\end{cases}
\end{align}
In particular, the support of $\psi_\fine - \psi_\coarse$ is contained in $\omega := \bigcup (\TT_\coarse\setminus \TT_\fine)$. 
With the hat functions $\varphi_{\coarse,z}\in \mathcal{S}^1(\TT_\coarse)$
defined for all $z \in \NN_\coarse$ via $\varphi_{\coarse,z}(z')=\delta_{z,z'}$ for all $z'\in\NN_\coarse$, the continuous cut-off function $\widetilde \chi_\omega :=\sum_{z\in\NN_\coarse \cap \omega} \varphi_{\coarse,z}$ equals $1$ on the set $\omega$.
This shows that 
\begin{align*}
	\dual{S(\phi - \phi_\coarse)}{\psi_\fine - \psi_\coarse}_{L^2(\Gamma)}
	= \dual{\widetilde\chi_\omega S(\phi - \phi_\coarse)}{\psi_\fine - \psi_\coarse}_{L^2(\Gamma)}.
\end{align*}
We bound $\dual{\widetilde\chi_\omega S(\phi - \phi_\coarse)}{\psi_\fine}_{L^2(\Gamma)}$ and $\dual{\widetilde\chi_\omega S(\phi - \phi_\coarse)}{\psi_\coarse}_{L^2(\Gamma)}$ separately.  
The Cauchy--Schwarz inequality, the bounds $0 \le \widetilde\chi_\omega \le 1$ with ${\rm supp} (\widetilde\chi_\omega) \subseteq \bigcup\RR_{\coarse,\fine}$, the fact that $h_\coarse = h_\fine$ and $\psi_\coarse = \psi_\fine$ on $\bigcup(\TT_\coarse\cap\TT_\fine)$, the inverse estimate~\eqref{eq:discrete_invest}, and an element-wise Poincar\'e inequality (see, e.g., \cite[Lemma~5.2.1 and 5.3.3]{gantner17})
show for the latter term that
\begin{align*}
	|\dual{\widetilde\chi_\omega S(\phi - \phi_\coarse)}{\psi_\coarse}_{L^2(\Gamma)}|
	&\le \norm{h_\coarse^{-1/2}\widetilde\chi_\omega S(\phi - \phi_\coarse)}{L^2(\Gamma)} \norm{h_\coarse^{1/2}\psi_\coarse}{L^2(\bigcup (\TT_\coarse \cap \TT_\fine))}
	\\
	&\le \norm{h_\coarse^{-1/2} S(\phi - \phi_\coarse)}{L^2(\bigcup \RR_{\coarse,\fine})} \norm{h_\fine^{1/2}\psi_\fine}{L^2(\Gamma)}
	\\
	&\lesssim \norm{h_\coarse^{1/2} \nabla_\Gamma S(\phi - \phi_\coarse)}{L^2(\bigcup \RR_{\coarse,\fine})} \norm{\psi_\fine}{H^{-1/2}(\Gamma)}.
\end{align*}
For the other term, we have that 
\begin{align*}
	|\dual{\widetilde\chi_\omega S(\phi - \phi_\coarse)}{\psi_\fine}_{L^2(\Gamma)}|
	\le \norm{\widetilde\chi_\omega S(\phi - \phi_\coarse)}{H^{1/2}(\Gamma)} \norm{\psi_\fine}{H^{-1/2}(\Gamma)}.
\end{align*}
With the Faermann estimate~\eqref{eq:faermann1}, we see that 
\begin{align*}
	\norm{\widetilde\chi_\omega S(\phi - \phi_\coarse)}{H^{1/2}(\Gamma)}^2
	\lesssim \sum_{z\in\NN_\coarse} |\widetilde\chi_\omega  S(\phi - \phi_\coarse)|_{H^{1/2}(\omega_\coarse(z))}^2  + \norm{h_\coarse^{-1/2} \widetilde\chi_\omega  S(\phi - \phi_\coarse)}{L^2(\Gamma)}^2.
\end{align*}
We have already estimated the $L^2$-term before. 
The local bound~\eqref{eq:H12_H1}, the product rule, and the properties of $\widetilde\chi_\omega$ show for the other term that 
\begin{align*}
	\sum_{z\in\NN_\coarse} |\widetilde\chi_\omega  S(\phi - \phi_\coarse)|_{H^{1/2}(\omega_\coarse(z))}^2
	&\lesssim \sum_{z\in\NN_\coarse} \norm{h_\coarse^{1/2} \nabla_\Gamma (\widetilde\chi_\omega  S(\phi - \phi_\coarse))}{L^2(\omega_\coarse(z))}^2
	\\
	&\lesssim \norm{h_\coarse^{1/2} \nabla_\Gamma \widetilde\chi_\omega  S(\phi - \phi_\coarse)}{L^2(\Gamma)}^2
	+\norm{h_\coarse^{1/2} \nabla_\Gamma  S(\phi - \phi_\coarse)}{L^2(\bigcup\RR_{\coarse,\fine})}^2
	\\
	&\lesssim \norm{h_\coarse^{-1/2}  S(\phi - \phi_\coarse)}{L^2(\bigcup\RR_{\coarse,\fine})}^2
	+\norm{h_\coarse^{1/2} \nabla_\Gamma  S(\phi - \phi_\coarse)}{L^2(\bigcup\RR_{\coarse,\fine})}^2.
\end{align*}
Again, the first term has been already estimated before. 
Overall, we conclude that 
\begin{align*}
	|\dual{S(\phi - \phi_\coarse)}{\psi_\fine - \psi_\coarse}_{L^2(\Gamma)}|
	\le \eta_\coarse(\RR_{\coarse,\fine}) \norm{\psi_\fine}{H^{-1/2}(\Gamma)}
\end{align*}
which completes the proof.
\end{proof}

\section{Mixed form of indirect integral equation} \label{sec:dirichlet_indirect_mixed}

As discussed in Section~\ref{section_M}, the regularizing operator $M$ can be realized with integral operators,
or with the inverse of a differential operator. The former might be cumbersome since
the Galerkin matrices associated to the composition of integral operators are typically challenging
to assemble. In this section, we consider instead the latter option.
Specifically, we consider the choice $M := (\scal -\Delta_\Gamma)^{-1}: H^{-1/2}(\Gamma) \to H^1(\Gamma)$, presented in Section~\ref{section_M}.

While $M$ satisfies all required assumptions of Section~\ref{sec:dirichlet_indirect_cfie},
the discretization of~\eqref{eq:dirichlet_indirect_cfie} is not
feasible, as $M$ cannot be computed exactly (even up to quadrature errors).
Instead, we introduce similarly as in~\cite{bh05} the extra variable $f := M\phi$ and rewrite \eqref{eq:dirichlet_indirect_cfie}
as the following mixed system: Find $(\phi,f) \in H^{-1/2}(\Gamma) \times H^1(\Gamma)$ such that
\begin{align}\label{eq:dirichlet_indirect_mixed}
\begin{split}
	V_k \phi + i (K_k+1/2) f &= u,
	\\
	- \phi + \scal f  - \Delta_\Gamma f &= 0,
\end{split}
\end{align}
where the equations hold in $H^{1/2}(\Gamma) \times H^{-1}(\Gamma)$. 
Since $V_k\in \LL(H^{-1/2}(\Gamma),H^{1/2}(\Gamma))$ and $\scal-\Delta_\Gamma\in \LL(H^1(\Gamma),H^{-1}(\Gamma))$ are symmetric and elliptic at least up to some compact perturbation and $K_k\in \LL(H^1(\Gamma),H^1(\Gamma))$, the operator $R = (R_1,R_2)\in \LL( H^{-1/2}(\Gamma) \times H^1(\Gamma),H^{1/2}(\Gamma) \times H^{-1}(\Gamma))$ induced by the left-hand side of~\eqref{eq:dirichlet_indirect_mixed} is elliptic up to some compact perturbation. 
As~\eqref{eq:dirichlet_indirect_cfie} is uniquely solvable, so is~\eqref{eq:dirichlet_indirect_mixed}. Hence, $R$ is injective and thus even bijective by the Fredholm alternative.
We conclude that problem~\eqref{eq:dirichlet_indirect_mixed} is well-posed.

\begin{remark}
With minor modifications, the analysis of the whole Section~\ref{sec:dirichlet_indirect_mixed}
holds equally true for the (inverse of the) piecewise Laplace--Beltrami operator
of~\eqref{eq:pw_laplace_beltrami}. For brevity though, we do not provide the details.
\end{remark}

\subsection{Discretization}\label{sec:dirichlet_indirect_mixed:discretization}

For any $\TT_\coarse \in \T$, we consider the discrete subspace
$\PP^p(\TT_\coarse) \times \mathcal{S}^{p+2}(\TT_\coarse) \subset H^{-1/2}(\Gamma) \times H^1(\Gamma)$.
This leads to the discrete formulation: Find a Galerkin approximation
$(\phi_\coarse,f_\coarse) \in \PP^p(\TT_\coarse)\times \mathcal{S}^{p+2}(\TT_\coarse)$
such that
\begin{align}\label{eq:dirichlet_indirect_mixed:discretization}
	\dual{R(\phi_\coarse,f_\coarse)}{(\psi_\coarse,g_\coarse)}_{L^2(\Gamma)} = \dual{u}{\psi_\coarse}_{L^2(\Gamma)}
	\quad \text{for all } (\psi_\coarse,g_\coarse) \in \PP^p(\TT_\coarse) \times \mathcal{S}^{p+2}(\TT_\coarse). 
\end{align}

To theoretically guarantee well-posedness of~\eqref{eq:dirichlet_indirect_mixed:discretization},
we assume that the initial triangulation $\TT_0$ is sufficiently fine such that
\begin{align}\label{eq:dirichlet_indirect_mixed:discrete_inf-sup}
	\inf_{\chi_\coarse \in \PP^p(\TT_\coarse)\atop e_\coarse\in \mathcal{S}^{p+2}(\TT_\coarse)}\sup_{\psi_\coarse \in \PP^p(\TT_\coarse)\atop g_\coarse \in \mathcal{S}^{p+2}(\TT_\coarse)}  
	\frac{|\dual{R (\chi_\coarse,e_\coarse)}{(\psi_\coarse,g_\coarse)}_{L^2(\Gamma)}|}{\norm{(\psi_\coarse,g_\coarse)}{H^{-1/2}(\Gamma)\times H^1(\Gamma)}}
	\gtrsim 1
	\quad \text{for all }\TT_\coarse\in\T,
\end{align}
where $\dual{\cdot}{\cdot}_{L^2(\Gamma)}$ denotes the extended $L^2$-scalar product on $[H^{1/2}(\Gamma) \times H^{-1}(\Gamma)] \times [H^{-1/2}(\Gamma)\times H^1(\Gamma)]$.
Owing to the fact that $R$ is symmetric and elliptic up to some compact perturbation, the existence of
a maximal mesh size ${\tt h} \gtrsim 1$ ensuring~\eqref{eq:dirichlet_indirect_mixed:discrete_inf-sup}
whenever $h_T \leq {\tt h}$ for all $T \in \TT_0$ is standard; see again, e.g., \cite[Theorem~4.2.9]{ss11} or \cite[Proposition~1]{bhp17}. 

\begin{remark}
Given any $\TT_\coarse\in\T$, the unique Galerkin approximation
$(\phi_\coarse,f_\coarse) \in \PP^p(\TT_\coarse)\times \mathcal{S}^{p+2}(\TT_\coarse)$
of $(\phi,f)$ obtained by solving~\eqref{eq:dirichlet_indirect_mixed:discretization}
satisfies the C\'ea lemma
\begin{align}\label{eq:dirichlet_indirect_mixed:cea}
	\norm{(\phi - \phi_\coarse, f - f_\coarse)}{H^{-1/2}(\Gamma) \times H^1(\Gamma)}
	\lesssim \min_{\psi_\coarse \in \PP^p(\TT_\coarse) \atop g_\coarse \in \mathcal{S}^{p+2}(\TT_\coarse)} 
		\norm{(\phi - \psi_\coarse, f - g_\coarse)}{H^{-1/2}(\Gamma)\times H^1(\Gamma)}.
\end{align}
\end{remark}

\begin{remark}
For sufficiently smooth $\phi$ and $f$, one has that
\begin{align*}
	\min_{\psi_\coarse \in \PP^p(\TT_\coarse)} \norm{\phi - \psi_\coarse}{H^{-1/2}(\Gamma)} 
	&= \OO(\norm{h_\coarse}{L^\infty(\Gamma)}^{p+3/2})
	\\
	\min_{g_\coarse \in \mathcal{S}^{p+2}(\TT_\coarse)} \norm{f - g_\coarse}{H^1(\Gamma)}
	&= \OO(\norm{h_\coarse}{L^\infty(\Gamma)}^{p+2}).
\end{align*}
The choice $p+2$ as polynomial degree to discretize $f$ thus does not spoil the convergence rate of $\norm{\phi-\phi_\coarse}{H^{-1/2}(\Gamma)}$. 
That being said, the forthcoming analysis holds analogously for $p+2$ replaced by some arbitrary fixed degree $q\in\N$. 
\end{remark}

\subsection{A posteriori error estimation}

Our error estimator now consists of two parts. For the first equation
in~\eqref{eq:dirichlet_indirect_mixed}, we introduce
\begin{subequations}
\begin{equation}\label{eq:mixed_est1}
\eta_{1,\coarse}
:=
\|h_\coarse^{1/2}\nabla_\Gamma(u-V_k\phi_\coarse-i(K_k+1/2)f_\coarse)\|_{L^2(\Gamma)},
\end{equation}
whereas
\begin{equation}\label{eq:mixed_est2}
\eta_{2,\coarse}^2
:=
\sum_{T\in\TT_\coarse} h_T^2 \norm{\phi_\coarse - \scal f_\coarse + \Delta_T f_\coarse}{L^2(T)}^2
+ h_T\norm{[\nabla_\Gamma f_\coarse \cdot \nnu]}{L^2(\partial T)}^2 
\end{equation}
controls the residual of the second equation. We then set
\begin{equation}
\eta_{\coarse}^2 := \eta_{1,\coarse}^2 + \eta_{2,\coarse}^2.
\end{equation}
\end{subequations}
Here, $\Delta_T$ denotes the local Laplace--Beltrami operator on $T$ and
$[\nabla_\Gamma(\cdot)\cdot \nnu]$ the jump of the normal derivative,
which reads on any non-empty face $F=T_-\cap T_+$,
$T_\pm \in\TT_\coarse$, as 
\begin{align*}
[\nabla_\Gamma(\cdot)\cdot \nnu]|_F
=
\big(\nabla_\Gamma(\cdot)|_{T_-}\cdot \nnu_- + \nabla_\Gamma(\cdot)|_{T_+}\cdot \nnu_+\big)
\end{align*}
with the tangential normal vectors $\nnu_\pm$ on $\partial T_\pm$ pointing outside of $T_\pm$
and tangent to $T_\pm$; cf.\ \cite[Section~3.1]{bcmmn16} for a precise definition in terms of the bi-Lipschitz parametrizations $\gamma_{T_\pm}$.
Note that the jump is actually defined up
to a sign (if the roles of $T_\pm$ are flipped), which is however not relevant here, as only the magnitude
enters the definition of the estimator.

The first part~\eqref{eq:mixed_est1} of the estimator is similar to the estimator introduced
for the non-mixed formulation in Section~\ref{sec:dirichlet_indirect_cfie:posteriori}.
It is the standard weighted-residual error estimator for integral equations of the first kind;
see again~\cite{cs95,cs96,carstensen97,bbhp19}. The second part~\eqref{eq:mixed_est2} is the
standard residual estimator for Poisson problems, here applied
on a manifold; see \cite{holst01,dd07,mmn11,bcmn13,bcmmn16}.

\begin{theorem}
\label{thm:estimator_mixed}
There exists a constant $\const{rel} > 0$ such that
\begin{equation}\label{eq:reliability_mixed}
\|(\phi,f)-(\phi_\coarse,f_\coarse)\|_{H^{-1/2}(\Gamma) \times H^1(\Gamma)}
\le \const{rel}
\eta_\coarse
\quad \text{for all }\TT_\coarse\in\T.
\end{equation}
If the boundary $\Gamma$ is polyhedral and all parametrizations $\gamma_T$, $T\in\TT_0$, are chosen to be affine, 
there further exists a constant $\const{eff} > 0$ such that
\begin{align}\label{eq:dirichlet_indirect_mixed:efficiency}
	\eta_\coarse \le \const{eff} \big( \norm{h_\coarse^{1/2} (\phi - \phi_\coarse)}{L^2(\Gamma)} + \norm{f - f_\coarse}{H^1(\Gamma)}\big). 
\end{align}
The constants $\const{rel}$ and $\const{eff}$ depend only on $d, k, \Gamma, \alpha$, and $\TT_0$.
\end{theorem}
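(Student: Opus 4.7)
The plan is to use bijectivity of the mixed operator $R$ to reduce reliability to a sum of two residual dual norms which are then estimated by $\eta_{1,\coarse}$ and $\eta_{2,\coarse}$ separately, and to treat efficiency of each component by inverse estimates (for $\eta_{1,\coarse}$) and Verf\"urth-type bubble functions (for $\eta_{2,\coarse}$).

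For reliability, since $R \in \LL(H^{-1/2}(\Gamma)\times H^1(\Gamma), H^{1/2}(\Gamma)\times H^{-1}(\Gamma))$ is an isomorphism, the open mapping theorem immediately yields
\begin{align*}
&\norm{(\phi-\phi_\coarse,f-f_\coarse)}{H^{-1/2}(\Gamma)\times H^1(\Gamma)}
\\&\qquad\lesssim \norm{u - V_k\phi_\coarse - i(K_k+1/2)f_\coarse}{H^{1/2}(\Gamma)} + \norm{\phi_\coarse - \scal f_\coarse + \Delta_\Gamma f_\coarse}{H^{-1}(\Gamma)}.
\end{align*}
The first term is controlled by $\eta_{1,\coarse}$ exactly as in the reliability proof of Theorem~\ref{thm:estimator_indirect}: the residual lies in $H^1(\Gamma)$, is orthogonal to $\PP^0(\TT_\coarse)\subseteq\PP^p(\TT_\coarse)$ by Galerkin orthogonality on the first equation, and Faermann localisation~\eqref{eq:faermann1} together with the patch scaling~\eqref{eq:H12_H1} delivers the bound. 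The second term is handled by the classical duality argument for weighted-residual estimators of Poisson-type problems on surfaces: given $v\in H^1(\Gamma)$, Galerkin orthogonality of the second equation against a Scott--Zhang interpolant $v_\coarse\in\mathcal{S}^{p+2}(\TT_\coarse)$ combined with elementwise integration by parts of $\int_\Gamma\nabla_\Gamma f_\coarse\cdot\nabla_\Gamma(v-v_\coarse)$ produce precisely the volume residuals and jump terms appearing in $\eta_{2,\coarse}$; standard Scott--Zhang estimates then give $\norm{\phi_\coarse-\scal f_\coarse+\Delta_\Gamma f_\coarse}{H^{-1}(\Gamma)}\lesssim \eta_{2,\coarse}$.

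For efficiency under the polyhedral/affine assumption, I would treat the two parts separately. The bound for $\eta_{1,\coarse}$ mirrors the efficiency argument in Theorem~\ref{thm:estimator_indirect}: substituting $u=V_k\phi+i(K_k+1/2)f$ and applying the inverse estimates~\eqref{eq:invest_V}--\eqref{eq:invest_K} together with the discrete inverse~\eqref{eq:discrete_invest} reduces $\eta_{1,\coarse}$ to $\norm{\phi-\phi_\coarse}{H^{-1/2}(\Gamma)}+\norm{h_\coarse^{1/2}(\phi-\phi_\coarse)}{L^2(\Gamma)}+\norm{f-f_\coarse}{H^1(\Gamma)}$, the first summand being absorbed by a mixed-problem analogue of~\eqref{eq:approx_phi}. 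For $\eta_{2,\coarse}$ I would invoke the standard Verf\"urth bubble-function framework: an interior bubble $b_T$ on $T$ tests the volume residual and an edge bubble $b_F$ tests the jump, and using the exact equation $\phi-\scal f+\Delta_\Gamma f=0$ to cancel the exact quantities leaves contributions bounded by $h_T\norm{\phi-\phi_\coarse}{L^2(\omega_T)}$ and $\norm{f-f_\coarse}{H^1(\omega_T)}$, which after global summation and use of $h_T\lesssim 1$ yield the right-hand side of~\eqref{eq:dirichlet_indirect_mixed:efficiency}.

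The principal obstacle is the coupling introduced by the mixed structure. On the reliability side everything reduces to standard arguments once the residual split is performed. On the efficiency side, deriving the mixed analogue of~\eqref{eq:approx_phi} requires combining the Galerkin stability delivered by the discrete inf-sup condition~\eqref{eq:dirichlet_indirect_mixed:discrete_inf-sup} with the weighted $H^{-1/2}$-superconvergence of the $L^2$-projection onto $\PP^p(\TT_\coarse)$ from~\cite{affkmp17}, and the two discrete test components must be picked compatibly. Moreover, the polyhedral/affine hypothesis enters the bubble argument for $\eta_{2,\coarse}$ precisely because on a curvilinear element $\Delta_T f_\coarse$ involves derivatives of the surface metric and is not polynomial, so the $p$-uniform polynomial inverse inequalities underlying the Verf\"urth trick do not apply off the shelf.
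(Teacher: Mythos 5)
Your proposal follows essentially the same route as the paper's own proof. For reliability, both arguments begin by using bijectivity of $R$ to bound the error by the two dual-norm residuals $\norm{u - R_1(\phi_\coarse,f_\coarse)}{H^{1/2}(\Gamma)}$ and $\norm{R_2(\phi_\coarse,f_\coarse)}{H^{-1}(\Gamma)}$; the first is then handled exactly as in Theorem~\ref{thm:estimator_indirect} (Galerkin orthogonality of the residual against $\PP^0(\TT_\coarse)$, Faermann localization~\eqref{eq:faermann1}, and patch scaling~\eqref{eq:H12_H1}), and the second by the standard residual argument for Poisson-type surface problems (Scott--Zhang interpolation plus elementwise integration by parts), which the paper delegates to \cite{holst01,dd07,mmn11,bcmn13,bcmmn16}. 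For weak efficiency, both arguments treat $\eta_{1,\coarse}$ with the inverse estimates~\eqref{eq:invest_V}--\eqref{eq:invest_K} and~\eqref{eq:discrete_invest} and absorb the residual $\norm{\phi-\phi_\coarse}{H^{-1/2}(\Gamma)}$ via a mixed analogue of~\eqref{eq:approx_phi} built on the C\'ea lemma~\eqref{eq:dirichlet_indirect_mixed:cea}, while $\eta_{2,\coarse}$ is controlled by the bubble-function technique (the paper phrases this as the converse of~\eqref{eq:dirichlet_indirect_mixed:reliability1}, $\eta_{2,\coarse}\lesssim\norm{R_2(\phi_\coarse,f_\coarse)}{H^{-1}(\Gamma)}$, again citing literature, and spells out the bubble argument explicitly only in the robust version, Corollary~\ref{cor:asymptotic2}). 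Your explanation for why the polyhedral/affine hypothesis is needed --- that on curvilinear elements $\Delta_T f_\coarse$ involves derivatives of the metric coefficients and is no longer polynomial, so oscillation terms would appear --- correctly identifies what the paper defers to Remark~\ref{rem:reliability_mixed}. In short, no gap; your write-up is slightly more explicit about the bubble and Scott--Zhang steps where the paper points to references, but the decomposition, the use of $R$'s invertibility, Faermann localization, the inverse estimates, the C\'ea-based absorption, and the role of the geometric hypothesis all agree with the paper.
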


\begin{remark}\label{rem:reliability_mixed}
Similarly as in Remark~\ref{rem:reliability}, the discrete inf-sup
stability~\eqref{eq:dirichlet_indirect_mixed:discrete_inf-sup} is not
required for reliablity~\eqref{eq:reliability_mixed}, which holds in fact true for any
(in general non-unique) $(\phi_\coarse,f_\coarse)\in\PP^p(\TT_\coarse)\times \mathcal{S}^{p+2}(\TT_\coarse)$ satisfying \eqref{eq:dirichlet_indirect_mixed:discretization}.
This is also the case for weak efficiency~\eqref{eq:dirichlet_indirect_mixed:efficiency} if the right-hand side is replaced by $\norm{\phi - \phi_\coarse}{H^{-1/2}(\Gamma)} +\norm{h_\coarse^{1/2} (\phi - \phi_\coarse)}{L^2(\Gamma)} + \norm{f-f_\coarse}{H^1(\Gamma)}$.
Moreover, the assumption of a polyhedral boundary $\Gamma$ is not essential for weak efficiency~\eqref{eq:dirichlet_indirect_mixed:efficiency}, but additional oscillation terms are present in the general case; see~\cite{dd07,bcmn13,bcmmn16}.
\end{remark}

\begin{proof}[Proof of reliatility]
Since $R$ is an isomorphism, we can readily estimate
the discretization error by the norm of the residual, i.e.,
\begin{align*}
\norm{(\phi,f) - (\phi_\coarse, f_\coarse)}{H^{-1/2}(\Gamma) \times H^1(\Gamma)} 
\eqsim \norm{(u,0) - R(\phi_\coarse,f_\coarse)}{H^{1/2}(\Gamma)\times H^{-1}(\Gamma)}.
\end{align*}
As the first component $u - R_1(\phi_\coarse,f_\coarse) = u - V_k \phi_\coarse - i (K_k+1/2) f_\coarse$ is orthogonal to piecewise constants $\PP^0(\TT_\coarse)$, its  norm can be estimated as in the proof of Theorem~\ref{thm:estimator_indirect}, i.e., 
\begin{align}\label{eq:dirichlet_indirect_mixed:reliability1}
\norm{u - R_1(\phi_\coarse,f_\coarse)}{H^{1/2}(\Gamma)}^2
\eqsim \sum_{z\in\NN_\coarse} |u - R_1(\phi_\coarse,f_\coarse)|_{H^{1/2}(\omega_\coarse(z))}^2
\lesssim \eta_{1,\coarse}^2.
\end{align}
For the second component $R_2(\phi_\coarse,f_\coarse) = -\phi_\coarse + \scal f_\coarse - \Delta_\Gamma f_\coarse$, standard arguments as for the Poisson problem show that
\begin{align} \label{eq:equivalence_weighted_res}
\norm{R_2(\phi_\coarse,f_\coarse)}{H^{-1}(\Gamma)}^2 \lesssim \eta_{2,\coarse}^2;
\end{align}
cf.\ \cite{holst01} and \cite{dd07,mmn11,bcmn13,bcmmn16}, which even take some potential geometry approximation into account.
\end{proof}

\begin{proof}[Proof of weak efficiency]
In case of a polyhedral boundary $\Gamma$ (and affine parametrizations), standard arguments and the fact that $\phi_\coarse$ is discrete also yield the converse estimate of~\eqref{eq:dirichlet_indirect_mixed:reliability1} without any oscillation term, i.e., 
$\eta_{2,\coarse}\lesssim\norm{R_2(\phi_\coarse,f_\coarse)}{H^{-1}(\Gamma)}$; 
cf.~\cite{dd07,bcmn13,bcmmn16}. 
With the C\'ea lemma~\eqref{eq:dirichlet_indirect_mixed:cea}, one then sees as in the proof of Theorem~\ref{thm:estimator_indirect} the overall weak efficiency~\eqref{eq:dirichlet_indirect_mixed:efficiency}.
\end{proof}

As in Corollary~\ref{cor:asymptotic}, standard compactness arguments even allow to derive the following estimates, which are robust with respect to high frequencies up to a term that has to be resolved by a sufficiently fine mesh.

\begin{corollary}\label{cor:asymptotic2}
There exist a constant $\const{rel,a}>0$ and a function $\vartheta^{k,\alpha}_{\rm rel,a}: \R_+ \to \R_+$ with $\lim_{h \to 0} \vartheta^{k,\alpha}_{\rm rel}(h) = 0$ such that
\begin{align}
\|(\phi-\phi_\coarse,f-f_\coarse)\|_{H^{-1/2}(\Gamma) \times H^1(\Gamma)}
\le
\const{rel,a}\big(1+\vartheta^{k,\alpha}_{\rm rel,a}(\norm{h_\coarse}{L^\infty(\Gamma)})\big) \eta_\coarse 
\quad\text{for all }\TT_\coarse\in\T.
\end{align}
If the boundary $\Gamma$ is polyhedral and all parametrizations $\gamma_T$, $T\in\TT_0$, are chosen to be affine, 
there further exist a constant $\const{eff,a}>0$ and a function $\vartheta^{k,\alpha}_{\rm eff,a}: \R_+ \to \R_+$
with $\lim_{h \to 0} \vartheta^{k,\alpha}_{\rm eff,a}(h) = 0$ such that 
\begin{align}
\eta_\coarse
\le 
\const{eff,a}\big(1+\vartheta^{k,\alpha}_{\rm eff,a}(\norm{h_\coarse}{L^\infty(\Gamma)})\big)
\big(\norm{h_\coarse^{1/2} (\phi - \phi_\coarse)}{L^2(\Gamma)} + \norm{f - f_\coarse}{H^1(\Gamma)}\big)
\quad\text{for all }\TT_\coarse\in\T.
\end{align}
The constant $\const{rel,a}$ depends only on $d, \Gamma$, and $\TT_0$, while $\const{eff,a}$ depends additionally on $p$. 
In particular, both $\const{rel,a}$ and $\const{eff,a}$ are independent of $k$ and $\alpha$. 
\end{corollary}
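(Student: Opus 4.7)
The plan is to follow the template of Corollary~\ref{cor:asymptotic}, extending the compactness-based arguments from the single integral equation to the mixed system. Throughout, all hidden constants are to be kept independent of $k$ and $\alpha$.

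For reliability, I would start by decomposing the mixed operator as $R = R_0 + C_{k,\alpha}$, where
\begin{equation*}
R_0(\psi,g) := (V_0^+ \psi,\,\alpha g - \Delta_\Gamma g)
\end{equation*}
is symmetric and elliptic as a map $H^{-1/2}(\Gamma) \times H^1(\Gamma) \to H^{1/2}(\Gamma) \times H^{-1}(\Gamma)$, while $C_{k,\alpha} := R - R_0$ is compact thanks to the mapping properties~\eqref{eq:mapping_properties} and the compact embedding $H^1(\Gamma)\hookrightarrow H^{1/2}(\Gamma)$. Ellipticity of $R_0$ and the triangle inequality then yield
\begin{equation*}
\norm{(e_\phi,e_f)}{H^{-1/2}(\Gamma)\times H^1(\Gamma)}
\lesssim
\norm{R(e_\phi,e_f)}{H^{1/2}(\Gamma)\times H^{-1}(\Gamma)}
+
\norm{C_{k,\alpha}(e_\phi,e_f)}{H^{1/2}(\Gamma)\times H^{-1}(\Gamma)}.
\end{equation*}
The first term is the mixed residual $(u,0)-R(\phi_\coarse,f_\coarse)$ and is bounded by $\eta_\coarse$ exactly via the arguments already used in the proof of Theorem~\ref{thm:estimator_mixed} (Faermann localization together with $L^2$-orthogonality to $\PP^0(\TT_\coarse)$ for the first component, and the standard Poisson residual identity for the second). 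For the compact term, I would pick a norming pair $(\psi^\star,g^\star) \in H^{-1/2}(\Gamma) \times H^1(\Gamma)$ of unit norm, set $(\xi^\star,h^\star) := (C_{k,\alpha} \circ R^{-1})'(\psi^\star,g^\star)$, and exploit the mixed Galerkin orthogonality to replace $(\xi^\star,h^\star)$ by any conforming discrete pair. Taking the supremum over unit norming pairs and the infimum over discrete approximants defines $\vartheta^{k,\alpha}_{\rm rel,a}(h)$, and compactness of $(C_{k,\alpha} \circ R^{-1})'$ ensures that it tends to zero as $h\to 0$.

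For weak efficiency (under the polyhedral/affine assumption), I would treat the two components of $\eta_\coarse$ separately. The estimator $\eta_{2,\coarse}$ is the standard weighted-residual estimator for the Poisson-type equation $-\phi + \alpha f - \Delta_\Gamma f = 0$; element- and edge-bubble arguments in the spirit of~\cite{bcmn13,bcmmn16} yield
\begin{equation*}
\eta_{2,\coarse}
\lesssim
\norm{e_\phi}{H^{-1}(\Gamma)} + \norm{e_f}{H^1(\Gamma)}
\lesssim
\norm{h_\coarse^{1/2} e_\phi}{L^2(\Gamma)} + \norm{e_f}{H^1(\Gamma)},
\end{equation*}
with constants that depend only on shape regularity and the polynomial degree and, crucially, are independent of $k$ and $\alpha$. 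For $\eta_{1,\coarse}$, I would split $V_k = V_0 + (V_k-V_0)$, apply the $k$-independent inverse estimate~\eqref{eq:invest_V} with $k=0$ to $V_0 e_\phi$, and use the continuity of the smoothing remainder $V_k - V_0 : H^{-1/2}(\Gamma) \to H^1(\Gamma)$ recalled in the proof of Corollary~\ref{cor:asymptotic}. For the $(K_k+1/2) e_f$ piece, the $k$-independent inverse estimate~\eqref{eq:invest_K} gives directly
\begin{equation*}
\norm{h_\coarse^{1/2}\nabla_\Gamma (K_k+1/2) e_f}{L^2(\Gamma)}
\lesssim
\norm{e_f}{H^{1/2}(\Gamma)} + \norm{h_\coarse^{1/2}\nabla_\Gamma e_f}{L^2(\Gamma)}
\lesssim
\norm{e_f}{H^1(\Gamma)}.
\end{equation*}
Collecting these estimates yields $\eta_{1,\coarse} \lesssim \norm{e_\phi}{H^{-1/2}(\Gamma)} + \norm{h_\coarse^{1/2} e_\phi}{L^2(\Gamma)} + \norm{e_f}{H^1(\Gamma)}$, and the remaining $\norm{e_\phi}{H^{-1/2}(\Gamma)}$ contribution is absorbed, up to a multiplicative factor $(1+\vartheta^{k,\alpha}_{\rm eff,a}(\norm{h_\coarse}{L^\infty(\Gamma)}))$, by an asymptotic mixed-Galerkin stability argument in the spirit of~\cite[Theorem~20]{bhp17}, exactly as in the final step of the proof of Corollary~\ref{cor:asymptotic}.

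The main obstacle I anticipate is the duality step in the reliability part: one has to approximate a generic dual element $(\xi^\star,h^\star)\in H^{-1/2}(\Gamma) \times H^1(\Gamma)$ in the mixed discrete space $\PP^p(\TT_\coarse) \times \mathcal{S}^{p+2}(\TT_\coarse)$, which requires an $H^1$-stable quasi-interpolation operator on $\Gamma$ (such as a Scott--Zhang variant adapted to a manifold) in order that the best-approximation supremum defining $\vartheta^{k,\alpha}_{\rm rel,a}$ genuinely vanishes as $h\to 0$. Once this approximation ingredient is in place, both the reliability and efficiency arguments reduce to a bookkeeping adaptation of the non-mixed case worked out in Corollary~\ref{cor:asymptotic}.
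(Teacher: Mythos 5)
Your overall plan---decompose $R$ into an elliptic reference operator plus a compact remainder, bound the residual by $\eta_\coarse$, and treat the compact term by a duality/best-approximation argument---is structurally the same as the paper's route for the reliability bound. However, your specific choice $R_0(\psi,g) = (V_0^+\psi,\, \alpha g - \Delta_\Gamma g)$ defeats the purpose of the corollary: the ellipticity and norm-equivalence constants of $\alpha - \Delta_\Gamma\colon H^1(\Gamma) \to H^{-1}(\Gamma)$ degenerate like $\min(\alpha,1)^{-1}$ as $\alpha\to 0$, and since the equivalence $\norm{(e_\phi,e_f)}{H^{-1/2}(\Gamma)\times H^1(\Gamma)} \eqsim \norm{R_0(e_\phi,e_f)}{H^{1/2}(\Gamma)\times H^{-1}(\Gamma)}$ enters $\const{rel,a}$ multiplicatively, you cannot conclude $\alpha$-independence of $\const{rel,a}$ as claimed. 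The remedy is to keep the reference operator $\alpha$-free, e.g.\ $R_0 = \mathrm{diag}(V_0^+,\, \alpha_0 - \Delta_\Gamma)$ with a fixed shift $\alpha_0>0$, so that the residual $(\alpha-\alpha_0)\,\mathrm{id}\colon H^1(\Gamma)\to L^2(\Gamma)\hookrightarrow H^{-1}(\Gamma)$ is compact and joins $C_{k,\alpha}$, and all $\alpha$-dependence ends up in the vanishing quantity $\vartheta^{k,\alpha}_{\rm rel,a}$. (The paper's proof writes $-\Delta_\Gamma$ alone, which is not invertible on $H^1(\Gamma)$ because of the constants; one should read a fixed positive shift there.)

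The efficiency part has two further gaps. First, the chain $\eta_{2,\coarse} \lesssim \norm{e_\phi}{H^{-1}(\Gamma)} + \norm{e_f}{H^1(\Gamma)} \lesssim \norm{h_\coarse^{1/2}e_\phi}{L^2(\Gamma)} + \norm{e_f}{H^1(\Gamma)}$ contains an invalid step: the second inequality fails in general, since $e_\phi=\phi-\phi_\coarse$ has no element-wise mean-zero structure justifying $\norm{e_\phi}{H^{-1}(\Gamma)} \lesssim \norm{h_\coarse^{1/2}e_\phi}{L^2(\Gamma)}$. The correct route is the direct bubble-function argument, which gives $h_T\norm{r_T}{L^2(T)} \lesssim h_T^{1/2}\norm{h_\coarse^{1/2}e_\phi}{L^2(T)} + (1+\alpha h_T)\norm{e_f}{H^1(T)}$ and similarly for the jump term. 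Second, and more importantly, the constants in these element/edge-bubble bounds are \emph{not} independent of $\alpha$ as you assert: the term $\alpha f_\coarse$ in the residual unavoidably produces a factor $\alpha h_T$, and this is precisely why the stated efficiency bound has the structure $\const{eff,a}\bigl(1+\vartheta^{k,\alpha}_{\rm eff,a}(\norm{h_\coarse}{L^\infty(\Gamma)})\bigr)$, with the $\alpha$-dependence captured by the vanishing $\vartheta$-term rather than by a fixed $\alpha$-independent constant. Your treatment of $\eta_{1,\coarse}$ (splitting $V_k = V_0 + (V_k-V_0)$, using the $k$-independent inverse estimates, and absorbing $\norm{e_\phi}{H^{-1/2}(\Gamma)}$ via an asymptotic Galerkin-stability argument) is consistent with the paper's proof.
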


\begin{proof}[Proof of reliability]
The proof follows along the lines of Corollary~\ref{cor:asymptotic}, where the compact operator 
\begin{align*}
	C_{k,\alpha} := R -\left [
	\begin{array}{cc}
	V_0^+ & 0
	\\
	0 & -\Delta_\Gamma
	\end{array}
	\right ]: H^{-1/2}(\Gamma) \times H^1(\Gamma) \to H^{-1/2}(\Gamma) \times H^{-1}(\Gamma)
\end{align*}
now plays the role of $C_k$ from before. 
\end{proof}

\begin{proof}[Proof of weak efficiency]
For the first part of the estimator, we can proceed as in Corollary~\ref{cor:asymptotic} to see that
\begin{align*}
&\eta_{1,\coarse}
=
\norm{h_\coarse^{1/2} \nabla_\Gamma (V_k(\phi-\phi_\coarse) + i(K_k+1/2)(f-f_\coarse)}{L^2(\Gamma)}
\\
&\leq
\norm{h_\coarse^{1/2} \nabla_\Gamma V_0(\phi-\phi_\coarse)}{L^2(\Gamma)}
+
\norm{h_\coarse^{1/2} \nabla_\Gamma (V_k-V_0)(\phi-\phi_\coarse)}{L^2(\Gamma)}
+
\norm{h_\coarse^{1/2} \nabla_{\Gamma}(K_k+1/2)(f-f_\coarse)}{L^2(\Gamma)}
\end{align*}
with
\begin{align*}
\norm{h_\coarse^{1/2} \nabla_\Gamma V_0(\phi-\phi_\coarse)}{L^2(\Gamma)}
&\lesssim
\norm{\phi-\phi_\coarse}{H^{-1/2}(\Gamma)}
+
\norm{h_\coarse^{1/2} (\phi-\phi_\coarse)}{L^2(\Gamma)},
\\
\norm{h_\coarse^{1/2} \nabla_\Gamma (V_k-V_0)(\phi-\phi_\coarse)}{L^2(\Gamma)}
&\lesssim 
\norm{h_\coarse^{1/2}}{L^\infty(\Gamma)} \norm{\phi-\phi_\coarse}{H^{-1/2}(\Gamma)},
\\
\norm{h_\coarse^{1/2} \nabla_{\Gamma}(K_k+1/2)(f-f_\coarse)}{L^2(\Gamma)}
&\lesssim
\norm{h_\coarse^{1/2}}{L^\infty(\Gamma)} \norm{f-f_\coarse}{H^1(\Gamma)},
\end{align*}
where the hidden constant in the first ``$\lesssim$'' does not depend on $k$ and $\alpha$.
A standard argument shows that the stability constant of the Galerkin projection $(\phi,f)\mapsto(\phi_\coarse,f_\coarse)$ as mapping from $H^{-1/2}(\Gamma)\times H^1(\Gamma)$ onto itself is also of the form 
$\const{gal,a} (1 + \vartheta^{k,\alpha}_{\rm gal,a}(\norm{h_\coarse}{L^\infty(\Gamma)}))$ for some
$k$- and $\alpha$-independent $\const{gal,a}>0$ and some $\vartheta^{k,\alpha}_{\rm gal,a}:\R_+\to\R_+$ with $\lim_{h\to 0} \vartheta^{k,\alpha}_{\rm gal,a}(h) = 0$.
As in \cite[Corollary~3.3]{affkmp17}, we thus see that
\begin{align*}
\norm{\phi-\phi_\coarse}{H^{-1/2}(\Gamma)} 
\lesssim 
(1 + \vartheta^{k,\alpha}_{\rm gal,a}(\norm{h_\coarse}{L^\infty(\Gamma)})) 
\big(\norm{h_\coarse^{1/2}(\phi-\phi_\coarse)}{L^2(\Gamma)} + \norm{f - f_\coarse}{H^1(\Gamma)}\big)
\end{align*}
with hidden constant independent of $k$ and $\alpha$.

For the second part, we first fix $T \in \TT_\coarse$ and let
$r_T := \phi_\coarse-\alpha f_\coarse+\Delta_T f_\coarse$.
Using the standard bubble function $b_T$ (see e.g.~\cite{melenk_wohlmuth_2001a}),
and letting $v_T := b_T r_T$, we have that
\begin{align*}
h_T\norm{r_T}{L^2(T)}^2
&\lesssim
h_T\dual{r_T}{v_T}_{L^2(T)}
\\
&=
h_T\dual{-(\phi-\phi_\coarse)+\alpha(f-f_\coarse)-\Delta_T(f-f_\coarse)}{v_T}_{L^2(T)}
\\
&\leq
h_T \norm{\phi-\phi_\coarse}{L^2(T)}\norm{v_T}{L^2(T)}
+
(\alpha h_T) \norm{f-f_\coarse}{L^2(T)}\norm{v_T}{L^2(T)}
\\
&\quad+
\norm{\nabla_{\Gamma}(f-f_\coarse)}{L^2(T)}
h_T \norm{\nabla_{\Gamma}v_T}{L^2(T)},
\end{align*}
and an inverse estimate together with standard properties of $b_T$ show that
\begin{equation*}
h_T \norm{r_T}{L^2(T)}
\lesssim
h_T^{1/2} \norm{h_\coarse^{1/2}(\phi-\phi_\coarse)}{L^2(T)}
+
(1+\alpha h_T) \norm{f-f_\coarse}{H^1(T)}.
\end{equation*}
For $r_{\partial T} := [\nabla_\Gamma f_\coarse \cdot \nnu]$ and $\omega_\coarse(T) := \set{T'\in\TT_\coarse}{T \text{ and }T' \text{ share a facet}}$,
classically~\cite{melenk_wohlmuth_2001a}, there exists a function $v_{\partial T} \in H^1_0(\omega_\coarse(T))$
such that
\begin{equation*}
h_T^{1/2} \norm{r_{\partial T}}{L^2(\partial T)}^2
\lesssim
h_T^{1/2} \dual{r_{\partial T}}{v_{\partial T}}_{L^2(\partial T)}.
\end{equation*}
With the piecewise Laplace--Beltrami operator $\Delta_\coarse$ and
$r_\coarse := \phi_\coarse-\alpha f_\coarse+\Delta_\coarse f_\coarse$,
integration by parts then reveals that
\begin{align*}
h_T^{1/2} \norm{r_{\partial T}}{L^2(\partial T)}^2
&\lesssim
h_T^{1/2} \dual{\Delta_\coarse f_\coarse}{v_{\partial T}}_{L^2(\omega_\coarse(T))}
+
h_T^{1/2} \dual{\nabla_\Gamma f_\coarse}{\nabla_\Gamma v_{\partial T}}_{L^2(\omega_\coarse(T))}.
\\
&=
h_T \dual{r_\coarse}{h_T^{-1/2} v_{\partial T}}_{L^2(\omega_\coarse(T))}
-
h_T \dual{-(\phi-\phi_\coarse)+\alpha(f-f_\coarse)}{h_T^{-1/2} v_{\partial T}}_{L^2(\omega_\coarse(T))}
\\
&\quad-
\dual{\nabla_\Gamma (f-f_\coarse)}{h_T^{1/2} \nabla_\Gamma v_{\partial T}}_{L^2(\omega_\coarse(T))},
\end{align*}
and it follows from standard stability estimates for $v_{\partial T}$ that
\begin{equation*}
h_T^{1/2} \norm{r_{\partial T}}{L^2(\partial T)}
\lesssim
h_T^{1/2} \norm{h_\coarse^{1/2}(\phi-\phi_\coarse)}{L^2(\omega_\coarse(T))}
+
(1+\alpha h_T) \norm{f-f_\coarse}{H^1(\omega_\coarse(T))}.
\end{equation*}
Summing over all elements concludes the proof.
\end{proof}

\subsection{Adaptive algorithm}

We abbreviate the weighted-residual error indicators by
\begin{subequations}\label{eq:dirichlet_indirect_mixed:indicators}
\begin{align}
	\eta_{1,\coarse}(T)^2 &:= \norm{h_\coarse^{1/2} \nabla_\Gamma (u - R_1(\phi_\coarse,f_\coarse))}{L^2(T)}^2,
	\\
	\eta_{2,\coarse}(T)^2 &:= h_T^2 \norm{\phi_\coarse - \scal f_\coarse + \Delta_T f_\coarse}{L^2(T)}^2
	+ h_T \norm{[\nabla_\Gamma f_\coarse \cdot \nnu]}{L^2(\partial T)}^2, 
	\\
	\eta_\coarse(T)^2 &:= \eta_{1,\coarse}(T)^2 + \eta_{2,\coarse}(T)^2
	\quad \text{for all } T\in\TT_\coarse.
\end{align}
\end{subequations}
Moreover, we set
\begin{align*}
	\eta_{j,\coarse}(\UU_\coarse)^2 := \sum_{T \in \UU_\coarse} \eta_{j,\coarse}(T)^2, 
	\eta_\coarse(\UU_\coarse)^2 := \eta_{1,\coarse}(\UU_\coarse)^2 + \eta_{2,\coarse}(\UU_\coarse)^2
	\quad \text{for all } j\in\{1,2\}, \UU_\coarse \subseteq \TT_\coarse.
\end{align*}
Note that $\eta_{1,\coarse} = \eta_{1,\coarse}(\TT_\coarse)$, $\eta_{2,\coarse} = \eta_{2,\coarse}(\TT_\coarse)$, and $\eta_\coarse = \eta_\coarse(\TT_\coarse)$. 
As in Section~\ref{sec:dirichlet_indirect_cfie:algorithm}, we can use the weighted-residual error estimator within a standard adaptive algorithm to steer the local mesh refinement of some given sufficiently fine conforming initial triangulation $\TT_0$ such that the discrete inf-sup stability~\eqref{eq:dirichlet_indirect_mixed:discrete_inf-sup} is satisfied.  

\begin{algorithm}\label{alg:dirichlet_indirect_cfie_mixed}
\textbf{Input:} D\"orfler parameter $0 < \theta \le 1$. 
\\
\textbf{Loop:} For each $\ell=0,1,2,\dots$, iterate the following steps:
\begin{enumerate}[\rm(i)]
\item Compute the Galerkin approximation $(\phi_\ell,f_\ell) \in \PP^p(\TT_\ell)\times \mathcal{S}^{p+2}(\TT_\ell)$ by solving~\eqref{eq:dirichlet_indirect_mixed:discretization}.
\item Compute the refinement indicators $\eta_\ell({T})$ from~\eqref{eq:dirichlet_indirect_mixed:indicators} for all elements ${T}\in\TT_\ell$.
\item Determine a minimal set of marked elements $\MM_\ell\subseteq\TT_\ell$ 
satisfying the D\"orfler marking
\begin{align*}
	\theta\,\eta_\ell^2 \le \eta_\ell(\MM_\ell)^2.
\end{align*}%
\item\label{poisson2d:conv:item:abstract_algorithm_refine} Generate refined triangulation $\TT_{\ell+1}:=\refine(\TT_\ell,\MM_\ell)$. %
\end{enumerate}
\textbf{Output:} Refined triangulations $\TT_\ell$, corresponding Galerkin approximations $(\phi_\ell,f_\ell) \in \PP^p(\TT_\ell)\times \mathcal{S}^{p+2}(\TT_\ell)$, and 
weighted-residual error estimators $\eta_\ell$ for all $\ell \in \N_0$.
\end{algorithm}

\subsection{Optimal convergence} 
We define the estimator-based approximation norm $\norm{(\phi,f)}{\mathbb{A}_s^{\rm est}}$ as in~\eqref{eq:linconv_dirichlet_indirect_cfie}, 
and say again that the solution pair $(\phi,f)\in H^{-1/2}(\Gamma)\times H^1(\Gamma)$ lies in the \textit{approximation class $s$ with respect to the estimator} if $\norm{(\phi,f)}{\mathbb{A}_s^{\rm est}} <\infty$. 
The following theorem is the analogon of Theorem~\ref{thm:dirichlet_indirect_cfie}. 
It states that each possible rate $s>0$ is indeed realized by Algorithm~\ref{alg:dirichlet_indirect_cfie}. 
The proof is  given in  Section~\ref{sec:axioms_dirichlet_mixed_cfie}, where we will verify again the axioms of adaptivity from \cite{cfpp14} for the error estimator, which automatically imply the desired result.

\begin{theorem}\label{thm:dirichlet_mixed_cfie}
Suppose that 
the discrete inf-sup stability~\eqref{eq:dirichlet_indirect_mixed:discrete_inf-sup} is satisfied. 
Let $(\TT_\ell)_{\ell\in\N_0}$ be the sequence of triangulations generated by Algorithm~\ref{alg:dirichlet_indirect_cfie}.
Then, for arbitrary $0<\theta\le1$, the estimator converges linearly, i.e., there exist constants $0<\rho_{\rm lin}<1$ and $\const{lin}\ge1$ such that
\begin{align*}
	\eta_{\ell+j}^2\le \const{lin}\rho_{\rm lin}^j\eta_\ell^2\quad\text{for all }j,\ell\in\N_0.
\end{align*}
Moreover, there exists a constant $0<\theta_{\rm opt}\le1$ such that for all $0<\theta<\theta_{\rm opt}$, the estimator converges at optimal rate, i.e., for all $s>0$, there exist constants $c_{\rm opt},\const{opt}>0$ such that
\begin{align*}
	c_{\rm opt}\norm{(\phi,f)}{\mathbb{A}_s^{\rm est}}
	\le \sup_{\ell\in\N_0}{(\# \TT_\ell-\#\TT_0+1)^{s}}\,{\eta_\ell}
	\le \const{opt}\norm{(\phi,f)}{\mathbb{A}_s^{\rm est}}.
\end{align*}
The constant $\theta_{\rm opt}$ depends only on 
$d, k, \Gamma, \alpha, \TT_0$, and $p$, while $\const{lin},\rho_{\rm lin}$ depend additionally on $\theta$, and $\const{opt}$ depends furthermore on $s>0$.
The constant $c_{\rm opt}$ depends only on $d$, $\#\TT_0$,  $s$, and if there exists $\ell_0$ with $\eta_{\ell_0}=0$, then also on $\ell_0$ and $\eta_0$.
\end{theorem}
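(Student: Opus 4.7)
As in Section~\ref{sec:axioms_dirichlet_indirect_cfie}, the plan is to verify the four axioms of adaptivity from~\cite{cfpp14}, namely stability~\eqref{item:stability}, reduction~\eqref{item:reduction}, general quasi-orthogonality~\eqref{item:orthogonality}, and discrete reliability~\eqref{item:discrete_reliability}, for the combined estimator $\eta_\coarse^2 = \eta_{1,\coarse}^2 + \eta_{2,\coarse}^2$ in the product norm $\|\cdot\|_{H^{-1/2}(\Gamma)\times H^1(\Gamma)}$ on $\PP^p(\TT_\coarse)\times\mathcal{S}^{p+2}(\TT_\coarse)$. The abstract result~\cite[Theorem~4.1]{cfpp14} then yields linear convergence for any $\theta\in(0,1]$ and optimal rates for $\theta$ below the threshold $\theta_{\rm opt}$.

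For stability~\eqref{item:stability} and reduction~\eqref{item:reduction}, I would treat $\eta_{1,\coarse}$ and $\eta_{2,\coarse}$ separately. The bound on $\eta_{1,\coarse}$ follows exactly as in Section~\ref{sec:axioms_dirichlet_indirect_cfie}: apply the (inverse or elementwise) triangle inequality together with the inverse estimates~\eqref{eq:invest_V},~\eqref{eq:invest_K} to $V_k(\phi_\fine-\phi_\coarse)$ and $(K_k+1/2)(f_\fine-f_\coarse)$, where the latter exploits that $f_\fine - f_\coarse \in H^1(\Gamma)$ and that $\norm{h_\fine^{1/2}\nabla_\Gamma(f_\fine-f_\coarse)}{L^2(\Gamma)}\lesssim \norm{f_\fine-f_\coarse}{H^1(\Gamma)}$. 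The bound on $\eta_{2,\coarse}$ is the standard reasoning for the residual estimator of Poisson problems on manifolds~\cite{holst01,bcmn13,bcmmn16}: the volume term contracts because $h_\fine \le h_\coarse$ pointwise (with strict reduction factor on refined elements), and differences between $\phi_\fine-\phi_\coarse$ in the volume residual and $\nabla_\Gamma(f_\fine-f_\coarse)\cdot\nnu$ in the jump term can be controlled by $\norm{\phi_\fine-\phi_\coarse}{L^2}+\norm{f_\fine-f_\coarse}{H^1}$ using an inverse estimate and a discrete trace inequality. Combining gives~\eqref{item:stability} and~\eqref{item:reduction} with constants depending only on the usual data.

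For general quasi-orthogonality~\eqref{item:orthogonality}, I would invoke the abstract result~\cite{feischl22} (as in Section~\ref{sec:axioms_dirichlet_indirect_cfie}) applied to the operator $R$, which is injective and of Fredholm form (elliptic plus compact). The discrete inf-sup stability~\eqref{eq:dirichlet_indirect_mixed:discrete_inf-sup} is precisely the hypothesis needed, and a post-processing argument yields a uniform $\const{qo}$.

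The main obstacle is the discrete reliability~\eqref{item:discrete_reliability}, which has to deal with both components of the residual simultaneously. I would start from~\eqref{eq:dirichlet_indirect_mixed:discrete_inf-sup} to obtain, for any $(\psi_\coarse,g_\coarse)\in\PP^p(\TT_\coarse)\times\mathcal{S}^{p+2}(\TT_\coarse)$,
\begin{equation*}
\norm{(\phi_\fine-\phi_\coarse,f_\fine-f_\coarse)}{H^{-1/2}\times H^1}
\lesssim
\sup\frac{|\dual{R(\phi-\phi_\coarse,f-f_\coarse)}{(\psi_\fine-\psi_\coarse,g_\fine-g_\coarse)}_{L^2}|}{\norm{(\psi_\fine,g_\fine)}{H^{-1/2}\times H^1}}
\end{equation*}
with the supremum over discrete test pairs on $\TT_\fine$. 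Picking $\psi_\coarse$ as in~\eqref{eq:psi_drel} and $g_\coarse$ as a Scott--Zhang--type quasi-interpolant of $g_\fine$ preserving values on $\bigcup(\TT_\coarse\cap\TT_\fine)$, the difference $\psi_\fine-\psi_\coarse$ resp.\ $g_\fine-g_\coarse$ is supported in a neighborhood of the refinement patch $\bigcup\RR_{\coarse,\fine}$. The first-equation contribution is then bounded by $\eta_{1,\coarse}(\RR_{\coarse,\fine})$ via the cut-off and Faermann estimate trick of Section~\ref{sec:axioms_dirichlet_indirect_cfie}, using the additional inverse estimate for the smoother space $\mathcal{S}^{p+2}$. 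The second-equation contribution, which involves the Poisson-type residual tested against $g_\fine-g_\coarse$, is controlled after integration by parts element-by-element by $\eta_{2,\coarse}(\RR_{\coarse,\fine})$ using the standard approximation/stability of the quasi-interpolant, as in the discrete reliability proof for residual estimators of second-order elliptic problems~\cite{cfpp14}. Summing the two contributions yields~\eqref{item:discrete_reliability} with $\const{drel}$ depending only on the stated quantities, completing the verification of the axioms.
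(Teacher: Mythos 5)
Your proposal follows essentially the same route as the paper: verify the four axioms of adaptivity from~\cite{cfpp14} in the product norm, treating $\eta_{1,\coarse}$ and $\eta_{2,\coarse}$ separately via inverse estimates for stability and reduction, invoking~\cite{feischl22} (plus post-processing) for general quasi-orthogonality, and using the discrete inf-sup stability together with the cut-off test function~\eqref{eq:psi_drel} for $\psi_\coarse$ and a Scott--Zhang quasi-interpolant for $g_\coarse$ in discrete reliability. The only detail worth spelling out a bit more is in the reduction axiom for $\eta_{2,\coarse}$, where contraction of the jump term rests on the observation that $[\nabla_\Gamma f_\coarse\cdot\nnu]=0$ on edges newly created by refining $\TT_\coarse$ to $\TT_\fine$, so that on such edges only the jump of $f_\fine-f_\coarse$ contributes; you allude to this implicitly, but the paper states it explicitly.
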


\subsection{Proof of Theorem~\ref{thm:dirichlet_mixed_cfie}} \label{sec:axioms_dirichlet_mixed_cfie}
The axioms of adaptivity for the estimator from~\cite{cfpp14} now take the following form: 
There exist uniform constants $\const{stab},\const{red},\const{drel},\const{qo}>0$ and $0<\rho_{\rm red}<1$ such that for all $\TT_\coarse\in \T$, $\TT_\fine \in \refine(\TT_\coarse)$ with corresponding patch of all refined elements $\RR_{\coarse,\fine} := \set{T\in\TT_\coarse}{T \cap \bigcup(\TT_\coarse \setminus \TT_\fine) \neq \emptyset}$, the sequence $(\TT_\ell)_{\ell\in\N_0}$ of
Algorithm~\ref{alg:dirichlet_indirect_cfie_mixed}, and all $\ell,n\in\N_0$, there hold:

\begin{enumerate}[(1)]
\renewcommand{\theenumi}{A\arabic{enumi}}
\item Stability on non-refined elements: \label{item:stability2}
	$|\eta_\fine(\TT_\fine \cap \TT_\coarse) - \eta_\coarse(\TT_\coarse \cap \TT_\fine)| 
	\le \const{stab} \norm{(\phi_\fine - \phi_\coarse,f_\fine - f_\coarse)}{H^{-1/2}(\Gamma)\times H^1(\Gamma)}$;
\item Reduction on refined elements: \label{item:reduction2}
	$\eta_\fine(\TT_\fine \setminus \TT_\coarse)
	\le \rho_{\rm red} \eta_\coarse(\TT_\coarse \setminus \TT_\fine)  
		+ \const{red} \norm{(\phi_\fine - \phi_\coarse,f_\fine - f_\coarse)}{H^{-1/2}(\Gamma)\times H^1(\Gamma)}$;
\item General quasi-orthogonality: \label{item:orthogonality2}
	$\sum_{j=\ell}^{\ell + n} \norm{(\phi_{j+1} - \phi_j,f_{j+1} - f_j)}{H^{-1/2}(\Gamma) \times H^1(\Gamma)}^2 
	\le \const{qo}^2 \eta_\ell^2$; 
\item Discrete reliability: \label{item:discrete_reliability2}
	$\norm{(\phi_\fine - \phi_\coarse, f_\fine - f_\coarse)}{H^{-1/2}(\Gamma)\times H^1(\Gamma)}
	\le \const{drel} \eta_\coarse(\RR_{\coarse,\fine}).$
\end{enumerate}

To show the axioms of adaptivity, we combine boundary element arguments of~\cite{bbhp19}, where standard boundary integral equations for the Helmholtz problem are treated, with finite element arguments of \cite{ckns08,bcmn13,bcmmn16}, where the Poisson problem and the Laplace--Beltrami problem are treated.
We also mention that, in contrast to standard FEM, the source term $\phi_\coarse$ in the estimator~\eqref{eq:mixed_est2} is part of the Galerkin approximation itself and is thus mesh-dependent. 
As in Section~\ref{sec:axioms_dirichlet_indirect_cfie}, we again crucially exploit the inverse estimate~\eqref{eq:discrete_invest} and obtain constants that are independent of the sequence of Galerkin approximations $((\phi_\ell,f_\ell))_{\ell\in\N_0}$.

\begin{proof}[Proof of stability on non-refined elements~\eqref{item:stability2}]
Let $\TT_\coarse\in\T$ and $\TT_\fine\in\refine(\TT_\coarse)$. 
By the inverse triangle inequality, it suffices to prove the estimate for $\eta_1$ and $\eta_2$. 
The inverse triangle inequality and the fact that $h_\coarse = h_\fine$ on $\omega := \bigcup (\TT_\coarse \cap \TT_\fine)$ show that 
\begin{align*}
	|\eta_{1,\fine}(\TT_\fine \cap \TT_\coarse) - \eta_{1,\coarse}(\TT_\coarse \cap \TT_\fine)|
	\le \norm{h_\fine^{1/2} \nabla_\Gamma R_1(\phi_\fine - \phi_\coarse,f_\fine - f_\coarse)}{L^2(\Gamma)}.
\end{align*}
The inverse estimates~\eqref{eq:invest_V}--\eqref{eq:invest_K}, and \eqref{eq:discrete_invest} on the mesh $\TT_\fine$ imply that 
\begin{align*}
	\norm{h_\fine^{1/2} \nabla_\Gamma R_1(\phi_\fine - \phi_\coarse, f_\fine - f_\coarse)}{L^2(\Gamma)}
	&\lesssim \norm{\phi_\fine - \phi_\coarse}{H^{-1/2}(\Gamma)} + \norm{f_\fine - f_\coarse}{H^{1/2}(\Gamma)} 
	\\ &\quad
		+ \norm{h_\coarse^{1/2} \nabla_\Gamma (f_\fine-f_\coarse)}{L^2(\Gamma)}
	\\
	&\lesssim \norm{\phi_\fine - \phi_\coarse}{H^{-1/2}(\Gamma)} + \norm{f_\fine - f_\coarse}{H^1(\Gamma)}.
\end{align*}
Similarly, standard inverse estimates and a trace inequality (in the parameter domain) show that
\begin{align*}
	|\eta_{2,\fine}(\TT_\fine \cap \TT_\coarse) - \eta_{2,\coarse}(\TT_\coarse \cap \TT_\fine)|
	&\le \norm{h_\fine(\phi_\fine - \phi_\coarse)}{L^2(\Gamma)} + \norm{f_\fine - f_\coarse}{H^1(\Gamma)} 
	\\
	&\lesssim \norm{\phi_\fine - \phi_\coarse}{H^{-1/2}(\Gamma)} + \norm{f_\fine - f_\coarse}{H^1(\Gamma)}.
\end{align*}
This concludes the proof.
\end{proof}

\begin{proof}[Reduction on refined elements~\eqref{item:reduction2}]
Let $\TT_\coarse\in\T$ and $\TT_\fine\in\refine(\TT_\coarse)$. 
Again, we can estimate $\eta_1$ and $\eta_2$ separately. 
The triangle inequality and the fact that $h_\fine^{1/2} \le \rho h_\coarse^{1/2}$ on $\omega := \bigcup (\TT_\fine \setminus \TT_\coarse) = \bigcup (\TT_\coarse \setminus \TT_\fine)$ for some $0<\rho<1$ show that 
\begin{align*}
	\eta_{1,\fine}(\TT_\fine \setminus \TT_\coarse)
	\le \rho \eta_{1,\coarse}(\TT_\coarse \setminus \TT_\fine)
	+ \norm{h_\fine^{1/2} \nabla_\Gamma R_1(\phi_\fine - \phi_\coarse, f_\fine - f_\coarse)}{L^2(\Gamma)}.
\end{align*}
The second term has already been estimated in the proof of \eqref{item:stability2}. 
Similarly, the fact that $[\nabla_\Gamma f_\coarse \cdot \nnu] = 0$ on edges introduced within the refinement of $\TT_\coarse$ to $\TT_\fine$, standard inverse estimates, and a trace inequality show (as in the proof of \eqref{item:stability2}) for some uniform constants $C, C'>0$ that
\begin{align*}
	\eta_{2,\fine}(\TT_\fine \setminus \TT_\coarse)
	&\le \rho \eta_{2,\coarse}(\TT_\coarse \setminus \TT_\fine) +  C \big(\norm{h_\fine(\phi_\fine - \phi_\coarse)}{L^2(\Gamma)} + \norm{f_\fine - f_\coarse}{H^1(\Gamma)}\big)
	\\
	&\le \rho \eta_{2,\coarse}(\TT_\coarse \setminus \TT_\fine) +  C' \big(\norm{\phi_\fine - \phi_\coarse}{H^{-1/2}(\Gamma)} + \norm{f_\fine - f_\coarse}{H^1(\Gamma)}\big).
\end{align*}
This concludes the proof. 
\end{proof}

\begin{proof}[Proof of general quasi-orthogonality~\eqref{item:orthogonality2}]
By discrete inf-sup stability~\eqref{eq:dirichlet_indirect_mixed:discrete_inf-sup}, \cite{feischl22} is applicable and directly implies this axiom with $\const{qo}=\const{qo}(n)=o(n)$ depending sublinearly on the integer~$n$.
The post-processing argument~\cite[Remark~20]{feischl22} or \cite[Proposition~4.11]{cfpp14} then yield the axiom with a uniform constant~$\const{qo}\simeq 1$. 
\end{proof}

\begin{remark}\label{rem:dirichlet_indirect_cfie_mixed:cea}
As in Remark~\ref{rem:dirichlet_indirect_cfie:cea}, the analysis of \cite{ffp14,bhp17} allows for an alternative proof of~\eqref{item:orthogonality2}, 
and the arguments from there again allow to verify Corollary~\ref{cor:asymptotic2} along the mesh sequence $(\TT_\ell)_{\ell\in\N_0}$ for $\vartheta^{k,\alpha}_{\rm rel,a}(\norm{h_\ell}{L^\infty(\Gamma)})$ and $\vartheta^{k,\alpha}_{\rm eff,a}(\norm{h_\ell}{L^\infty(\Gamma)})$ replaced by some 
$\vartheta^{k,\alpha,\ell}_{\rm rel,a}>0$ and $\vartheta^{k,\alpha,\ell,a}_{\rm eff}>0$ which both converge to $0$ as $\ell\to\infty$.
\end{remark}

\begin{proof}[Proof of discrete reliability~\eqref{item:discrete_reliability2}]
Let $\TT_\coarse\in\T$ and $\TT_\fine\in\refine(\TT_\coarse)$. 
Discrete inf-sup stability~\eqref{eq:dirichlet_indirect_mixed:discrete_inf-sup} and the definition of $(\phi_\fine,f_\fine)$ as well $(\phi_\coarse,f_\coarse)$ (see~\eqref{eq:dirichlet_indirect_mixed:discretization}) show for all $(\psi_\coarse,g_\coarse)\in \PP^p(\TT_\coarse) \times \mathcal{S}^{p+2}(\TT_\coarse)$  that 
\begin{align*}
	\norm{(\phi_\fine - \phi_\coarse,f_\fine - f_\coarse)}{H^{-1/2}(\Gamma) \times H^1(\Gamma)}
	&\lesssim \sup_{\psi_\fine \in \PP^p(\TT_\fine) \atop g_\fine \in \mathcal{S}^{p+2}(\TT_\fine)}  \frac{|\dual{R(\phi_\fine - \phi_\coarse,f_\fine - f_\coarse)}{(\psi_\fine, g_\fine)}_{L^2(\Gamma)}|}{\norm{(\psi_\fine,g_\fine)}{H^{-1/2}(\Gamma)\times H^1(\Gamma)}}
	\\
	& = \sup_{\psi_\fine \in \PP^p(\TT_\fine) \atop g_\fine \in \mathcal{S}^{p+2}(\TT_\fine)}  \frac{|\dual{R(\phi - \phi_\coarse, f - f_\coarse)}{(\psi_\fine - \psi_\coarse, g_\fine - g_\coarse)}_{L^2(\Gamma)}|}{\norm{(\psi_\fine,g_\fine)}{H^{-1/2}(\Gamma)\times H^1(\Gamma)}}.
\end{align*}
Choosing $\psi_\coarse$ as in~\eqref{eq:psi_drel}, we see along the same lines of the non-mixed case that 
\begin{align*}
	|\dual{R_1(\phi - \phi_\coarse, f - f_\coarse)}{\psi_\fine - \psi_\coarse}_{L^2(\Gamma)}| 
	\lesssim \eta_{1,\coarse}(\RR_{\coarse,\fine}) \norm{\psi_\fine}{H^{-1/2}(\Gamma)}.
\end{align*}
Choosing $g_\coarse$ as the Scott--Zhang projection of $g_\fine$, standard arguments as for the Poisson problem show that 
\begin{align*}
	|\dual{R_2(\phi - \phi_\coarse, f - f_\coarse)}{g_\fine - g_\coarse}_{L^2(\Gamma)}|
	\lesssim \eta_{2,\coarse}(\RR_{\coarse,\fine}) \norm{g_\fine}{H^1(\Gamma)};
\end{align*}
cf.~\cite{ckns08,bcmn13,bcmmn16}. 
This concludes the proof.
\end{proof}

\section{Direct integral equations}
\label{sec:direct}

Given again a regularizing operator $M\in \LL(H^{-1/2}(\Gamma), H^{1/2}(\Gamma))$
satisfying~\eqref{eq:compact} and~\eqref{eq:positive}, we consider the representation formula \eqref{eq:representation}
with corresponding Calder\'on system
\begin{subequations}
\begin{align}
	\label{eq:calderon1}
	U|_\Gamma &= (K_k + 1/2)(U|_\Gamma) - V_k(\partial_\nu U),
	\\ \label{eq:calderon2}
	\partial_\nu U &= -W_k(U|_\Gamma) - (K_k' - 1/2)(\partial_\nu U).
\end{align}
\end{subequations} 
We now apply $iM$ to \eqref{eq:calderon2}, add it to \eqref{eq:calderon1}, use the Dirichlet condition $U|_\Gamma = u$, and obtain for $\phi := \partial_\nu U$ that
\begin{align*}
	i M\phi + u = -iMW_k u - iM(K_k' - 1/2)\phi + (K_k + 1/2)u - V_k \phi,
\end{align*}
or equivalently,
\begin{align}\label{eq:dirichlet_direct_cfie}
	V_k \phi + iM(K_k'+1/2)\phi = (K_k - 1/2)u - iMW_k u. 
\end{align}
Clearly, the operator 
\begin{align}
	S:= V_k + iM(K_k'+1/2)\in \LL(H^{-1/2}(\Gamma),H^{1/2}(\Gamma))
\end{align}
on the left-hand side of~\eqref{eq:dirichlet_direct_cfie} is elliptic and symmetric up to the compact perturbation $V_k-V_0^+ + iM(K_k'+1/2)$. 
According to~\cite{bh05}, $S$ is also injective and thus even bijective by the Fredholm alternative. 

\subsection{Non-mixed form}

We can follow the lines of Section~\ref{sec:dirichlet_indirect_cfie} to show reliability and weak efficiency of the weighted-residual estimator $\eta_\coarse:=\norm{h_\coarse^{1/2} \nabla_\Gamma (u - S\phi_\coarse)}{L^2(\Gamma)}$ and optimal convergence of a corresponding adaptive algorithm. 
In particular, the mapping property $K_k'\in \LL(L^2(\Gamma),L^2(\Gamma))$ of~\eqref{eq:mapping_properties_K_pr} and the inverse estimates~\eqref{eq:invest_K_pr} and \eqref{eq:assumption_inverse_new} are exploited. 

\subsection{Mixed form}
\label{sec:direct_mixed}

Similarly as in Section~\ref{sec:dirichlet_indirect_mixed}, we consider $M := (\scal -\Delta_\Gamma)^{-1}$ as mapping from $H^{-1/2}(\Gamma)\subset H^{-1}(\Gamma)$ to $H^1(\Gamma) \subset H^{1/2}(\Gamma)$ and set
\begin{align}
	f := M\big[ (K_k'+1/2)\phi + W_k u\big].
\end{align} 
As a matter of fact, $f=0$ owing to~\eqref{eq:calderon2}. 
We obtain the mixed system 
\begin{align}\label{eq:dirichlet_direct_mixed}
\begin{split}
	V_k \phi + i f &= (K_k - 1/2)u, 
	\\
	- (K_k'+1/2) \phi + \scal f - \Delta_\Gamma f &= W_k u. 
\end{split}
\end{align}
in $H^{1/2}(\Gamma) \times H^{-1}(\Gamma)$ with sought $(\phi,f) \in H^{-1/2}(\Gamma) \times H^1(\Gamma)$. 
Clearly, the operator $R = (R_1,R_2): H^{-1/2}(\Gamma) \times H^1(\Gamma) \to H^{1/2}(\Gamma) \times H^{-1}(\Gamma)$ induced by the left-hand side of~\eqref{eq:dirichlet_direct_mixed} is elliptic up to some compact perturbation. 
Since \eqref{eq:dirichlet_direct_cfie} is uniquely solvable, $R$ is also injective and thus even bijective by the Fredholm alternative. 

It is well-known that if the Dirichlet datum satisfies $u = U|_\Gamma \in H^1(\Gamma)$, then $\phi = \partial_\nu U \in L^2(\Gamma)$; see, e.g., \cite[Theorem 4.24]{mclean00}.
We further recall the mapping properties $K_k\in \LL(H^1(\Gamma), H^1(\Gamma))$ and $W_k\in \LL(H^1(\Gamma), L^2(\Gamma))$ of \eqref{eq:mapping_properties}. 
Thus, we can follow the lines of Section~\ref{sec:dirichlet_indirect_mixed} to show reliability and weak efficiency of 
the weighted-residual estimator $\eta_\coarse = (\eta_{1,\coarse}^2 + \eta_{2,\coarse}^2)^{1/2}$ with 
\begin{align*}
	\eta_{1,\coarse}(T)^2 &:= \norm{h_\coarse^{1/2} \nabla_\Gamma ((K_k - 1/2)u - V_k \phi_\coarse - i f_\coarse)}{L^2(T)}^2,
	\\
	\eta_{2,\coarse}(T)^2 &:= h_T^2 \norm{W_ku + (K_k'+1/2) \phi_\coarse - \scal f_\coarse + \Delta_T f_\coarse}{L^2(T)}^2
	+ h_T \norm{[\nabla_\Gamma f_\coarse \cdot \nnu]}{L^2(\partial T)}^2, 
	\\
	\eta_\coarse(T)^2 &:= \eta_{1,\coarse}(T)^2 + \eta_{2,\coarse}(T)^2
	\quad \text{for all } T\in\TT_\coarse.
\end{align*}
 and optimal convergence of a corresponding adaptive algorithm.

\section{Numerical experiments}
\label{sec:numerics}

We consider the exterior Dirichlet problem~\eqref{eq:helmholtz_strong} with $k^2$ close to an eigenvalue of the interior Dirichlet problem on the following two bounded Lipschitz domains $\Omega$: the circle $$\Omega=\set{x\in\R^2}{|x|<\tfrac{1}{10}}$$ and  the  L-shape $$\Omega={\rm conv} \{(\tfrac{1}{10},0), (0,\tfrac{1}{10}),(0,0),(0,\tfrac{-1}{10})\} \setminus {\rm conv} \{(0,0),(\tfrac{-1}{20},\tfrac{1}{20}),(\tfrac{-1}{10},0),(\tfrac{-1}{20},\tfrac{-1}{20})\}.$$
Note that both domains satisfy $\diam(\Omega)<1$ so that the single-layer operator $V_0$ associated to the Laplace operator is elliptic. 
As exact solution $U$, we prescribe the fundamental solution $U(x_1,x_2):=G_k(x_1,x_2-\tfrac{1}{20})$ with singularity in the interior of $\Omega$. 
We apply the adaptive Algorithm~\ref{alg:dirichlet_indirect_cfie_mixed} with $p=0$ and $\theta\in\{1,0.9\}$ for the mixed forms~\eqref{eq:dirichlet_indirect_mixed} and  \eqref{eq:dirichlet_direct_mixed} of the indirect combined field integral equation~\eqref{eq:dirichlet_indirect_cfie} and the direct combined regularized field integral equation~\eqref{eq:dirichlet_direct_cfie}, respectively.
We select the scaling parameter $\scal = 1$ in the definition of $M$, i.e., $M:=(1-\Delta_\Gamma)^{-1}$.%
\footnote{This choice is dimensionally consistent as $\diam(\Omega) \sim 1$ in our examples.
Setting $\scal = k^2$ is also appealing, but we expect little difference in the results,
since the frequencies considered here are relatively low.}
Note that $\theta=1$ just leads to uniform refinement in each step.
As initial mesh $\TT_0$ for the boundary of the circle, we choose the four arcs $\set{x\in\Gamma}{x_1,x_2\ge 0}$, $\set{x\in\Gamma}{x_1\le 0,x_2\ge0}$, $\set{x\in\Gamma}{x_1,x_2\le 0}$, $\set{x\in\Gamma}{x_1\ge0, x_2\le 0}$ and parametrize them using their arclength parametrization (up to scaling to the reference element $T_{\rm ref}=(0,1)$). 
As initial mesh $\TT_0$ for the boundary of the L-shape, we simply choose the six straight lines that define the boundary and equip them with affine parametrizations. 
We stress that it is in principle unclear whether the initial meshes are indeed sufficiently fine to guarantee the assumed theoretically required discrete inf-sup stability on all refinements of $\TT_0$ (see \eqref{eq:dirichlet_indirect_mixed:discrete_inf-sup} in case of the indirect combined field integral formulation).

For comparison, we also consider the standard direct and indirect integral equations~\eqref{eq:bie_first_kind} and \eqref{eq:bie_first_kind_indirect}, discretized via 
\begin{align}
\label{eq:bie_standard_indirect}
	\dual{V_k\phi_\coarse}{\psi_\coarse}_{L^2(\Gamma)} = \dual{u}{\psi_\coarse}_{L^2(\Gamma)} 
	\quad \text{for all }\psi_\coarse \in \PP^0(\TT_\coarse)
\end{align}
and 
\begin{align*}
	\dual{V_k\phi_\coarse}{\psi_\coarse}_{L^2(\Gamma)} = \dual{(K_k-1/2)u}{\psi_\coarse}_{L^2(\Gamma)}
	\quad \text{for all }\psi_\coarse \in \PP^0(\TT_\coarse),
\end{align*}
respectively, with sought $\phi_\coarse\in\PP^0(\TT_\coarse)$. 
In this case, we employ the weighted-residual estimator $\norm{h_\coarse^{1/2}\nabla_\Gamma V_k(\phi-\phi_\coarse)}{L^2(\Gamma)}$ to steer adaptive refinement.
For $k^2$ not being an eigenvalue of the interior Dirichlet problem, \cite{bbhp19} shows that this estimator is indeed reliable and the algorithm converges at optimal rate. 

In any case, the numerical realization of the adaptive algorithms is based on the MATLAB implementation~\cite{gps22b,gps22a}.
In particular, we employ the built-in ``backslash'' direct solver of MATLAB to solve the linear systems
resulting from the discretization. We highlight that \cite{gps22b} provides a detailed documentation on the stable computation of the Laplace boundary integral operators $V_0, K_0, K_0', W_0$ along with corresponding Galerkin matrices as well as  weighted-residual error estimators. 
Since the Helmholtz kernel $G_k$ has a similar singularity as the Laplace kernel $G_0$, the techniques from \cite{gps22b} directly extend to the Helmholtz boundary integral operators $V_k, K_k, K_k', W_k$. 
Here, we note that, as the hypersingular operator $W_0$ for the Laplace problem, also the hypersingular operator $W_k$ for the Helmholtz problem satisfies an integration-by-parts formula; see, e.g., \cite[page 295]{mclean00}. 

The non-discrete Dirichlet datum $u=U|_\Gamma$ is replaced by its $L^2$-orthogonal projection $u_\coarse$ onto $\mathcal{S}^{2}(\TT_\coarse)$. 
For the Laplace problem, local data oscillation terms that control the additional error $\norm{u-u_\coarse}{H^{1/2}(\Gamma)}$ are investigated in~\cite{ffkmp14}, where it is shown that the algorithms still converge at optimal rate. 
The results of~\cite{ffkmp14} can be readily extended to the considered regularized combined field integral formulations. 
As a matter of fact, to obtain higher-order oscillations, it would suffice to take $u_\coarse\in \mathcal{S}^{1}(\TT_\coarse)$. 
However, since $f$ is discretized in $\mathcal{S}^{2}(\TT_\coarse)$, the required mass matrix is available anyway. 

For the direct approaches, the exact solution $\phi$ is explicitly given as $\phi=\partial_\nu U$. 
The error $\norm{\phi - \phi_\coarse}{V_0}:=\dual{V_0 (\phi - \phi_\coarse)}{\phi - \phi_\coarse}^{1/2} $ (being equivalent to $\norm{\phi - \phi_\coarse}{H^{-1/2}(\Gamma)}$) is approximately computed by replacing $\phi$ with its $L^2$-projection onto $\PP^{1}(\TT_\coarse)$.
We expect that adaptivity (i.e., $\theta=0.9$) yields a convergence rate $\OO((\#\TT_\coarse)^{-3/2})$ for the approximate error and the weighted-residual estimator. 

Concerning the overall runtimes of the adaptive algorithms up to 1000 mesh elements, we observe roughly a factor-of-two difference between the standard and the combined field integral formulation for both the indirect and the direct approach. 
Indeed, the actual time to solve the linear systems is negligible owing to the moderate number of degrees of freedom, so that, 
instead, the bottleneck becomes the assembly of the matrices and the computation of the estimators. 
As a consequence, the observed rough factor of $2$ is not surprising, as the combined field integral equations involve twice as many boundary integral operators as their standard counterparts, 
and the cost of treating the (sparse) matrix associated with the Laplace--Beltrami operator is small compared to the (dense) matrices linked to the integral operators. 
As can be seen below, this translates in some cases in significant
computational savings, since we can obtain the same accuracy in the combined
field formulations with much less degrees of freedom than the standard ones.
 
\newcounter{mycounter}
\newcommand\plotmyfigures[4]{
\foreach \geo in {#1}{
\foreach\exa in {#2}{
\foreach \dir in {#3}{
\foreach \fre in {#4}{
\ifnum\pdfstrcmp{\fre}{critical}=0
	\ifnum\pdfstrcmp{\geo}{circle}=0
		\newcommand\kapset{34.04825558,25.04825558,24.14825558,24.05825558,24.04925558,24.04835558,24.04826558,24.04825658,24.04825558}
	\fi
	\ifnum\pdfstrcmp{\geo}{square}=0
		\newcommand\kapset{32.21441469,23.21441469,22.31441469,22.22441469,22.21541469,22.21451469,22.21442469,22.21441569,22.21441469}
	\fi
	\ifnum\pdfstrcmp{\geo}{lshape}=0
		\newcommand\kapset{72.83185307,63.83185307,62.93185307,62.84185307,62.83285307,62.83195307,62.83186307,62.83185407,62.83185307}
	\fi
\else
	\newcommand\kapset{1,10,100,500,750,1000}
\fi

\begin{figure}[h!]
\centering
\setcounter{mycounter}{0}
\foreach \kap in \kapset{
\stepcounter{mycounter}
\pgfmathtruncatemacro{\result}{2-\value{mycounter}}
\begin{tikzpicture}
\begin{loglogaxis}[
width = 0.33\textwidth, height = 0.33\textwidth, 
ymajorgrids=true, xmajorgrids=true, grid style=dashed, 
tick label style={font=\tiny},
title={\ifnum\pdfstrcmp{\fre}{critical}=0 $k=k_\ifnum\pdfstrcmp{\geo}{circle}=0 O\fi \ifnum\pdfstrcmp{\geo}{lshape}=0 L \fi \ifnum\result=-7\else+10^{\result}\fi$ \else $k=\kap$\fi}
]

\pgfplotstableread[col sep=comma]{data/geo-\geo_dir-\dir_com-0_exa-\exa_kap-\kap_p-0_the-100.csv}\uni
\ifnum\dir=1 \addplot[red, mark=square, dashed, mark size=1.5pt] table[x=N_vec,y=err_vec]{\uni}; \fi
\addplot[blue, mark=x, dashed] table[x=N_vec,y=est_vec]{\uni};
\pgfplotstableread[col sep=comma]{data/geo-\geo_dir-\dir_com-0_exa-\exa_kap-\kap_p-0_the-90.csv}\ada
\ifnum\dir=1\addplot[red, mark=square,mark size=1.5pt] table[x=N_vec,y=err_vec]{\ada}; \fi
\addplot[blue, mark=x] table[x=N_vec,y=est_vec]{\ada};

\pgfplotstableread[col sep=comma]{data/geo-\geo_dir-\dir_com-1_exa-\exa_kap-\kap_p-0_the-100.csv}\unicom
\ifnum\dir=1\addplot[magenta, mark=diamond, dashed] table[x=N_vec,y=err_vec]{\unicom}; \fi
\addplot[cyan, mark=+, dashed] table[x=N_vec,y=est_vec]{\unicom};
\ifnum\pdfstrcmp{\fre}{critical}=0
	\addplot[green, mark=Mercedes star, dashed] table[x=N_vec,y=est2_vec]{\unicom};
\fi
\pgfplotstableread[col sep=comma]{data/geo-\geo_dir-\dir_com-1_exa-\exa_kap-\kap_p-0_the-90.csv}\adacom
\ifnum\dir=1 \addplot[magenta, mark=diamond] table[x=N_vec,y=err_vec]{\adacom}; \fi
\addplot[cyan, mark=+] table[x=N_vec,y=est_vec]{\adacom};
\ifnum\pdfstrcmp{\fre}{critical}=0
	\addplot[green, mark=Mercedes star] table[x=N_vec,y=est2_vec]{\adacom};
\fi

\pgfplotstablegetelem{0}{est_vec}\of{\adacom}
\let\estzero\pgfplotsretval
\pgfplotstablegetelem{0}{N_vec}\of{\adacom}
\let\dofzero\pgfplotsretval
\addplot[black,dashed,domain=\dofzero:1500] {\ifnum\pdfstrcmp{\geo}{lshape}=0 \ifnum\dir=0 3.5* \fi\fi 0.5*\estzero*x^(-3/2)*\dofzero^(3/2)};
\ifnum\pdfstrcmp{\fre}{critical}=0
	\addplot[black,dashed,domain=\dofzero:1500] {\ifnum\pdfstrcmp{\geo}{lshape}=0 \ifnum\dir=0 1* \fi\fi \ifnum\pdfstrcmp{\geo}{circle}=0 \ifnum\dir=1 40* \fi\fi \ifnum\pdfstrcmp{\geo}{lshape}=0 \ifnum\dir=1 80* \fi\fi 0.01*\estzero*x^(-2)*\dofzero^(2)};
\fi
\ifnum\dir=0 \ifnum\pdfstrcmp{\geo}{square}=0 
	\addplot[black,dashed,domain=\dofzero:1500] {0.5*\estzero*x^(-2/3)*\dofzero^(2/3) };
\else \ifnum\pdfstrcmp{\geo}{lshape}=0
	\addplot[black,dashed,domain=\dofzero:1500] {0.5*\estzero*x^(-2/3)*\dofzero^(2/3) };
\fi \fi \fi
\end{loglogaxis}
\end{tikzpicture}%
}    

\ifnum\dir=1
\ifnum\pdfstrcmp{\fre}{critical}=0
\begin{tikzpicture}
\begin{axis}[
hide axis, width = 0.1\textwidth, height = 0.1\textwidth, 
legend style={at={(0,0)},anchor=north, legend columns=4},
legend entries={{sta,uni,err},{sta,ada,err},{com,uni,err},{com,ada,err},{sta,uni,est},{sta,ada,est},{com,uni,est},{com,ada,est},{\phantom{empty}},{\phantom{empty}},{com,uni,est2},{com,ada,est2}}
]
\addlegendimage{red, mark=square, dashed, mark size=1.5pt}
\addlegendimage{red, mark=square,mark size=1.5pt}
\addlegendimage{magenta, mark=diamond, dashed}
\addlegendimage{magenta, mark=diamond}
\addlegendimage{blue, mark=x, dashed}
\addlegendimage{blue, mark=x}
\addlegendimage{cyan, mark=+, dashed}
\addlegendimage{cyan, mark=+}
\ifnum\pdfstrcmp{\fre}{critical}=0 
	\addlegendimage{empty legend}
	\addlegendimage{empty legend}
	\addlegendimage{green, mark=Mercedes star, dashed}
	\addlegendimage{green, mark=Mercedes star}
\fi
\addplot[color=white] coordinates {(0,0)};
\end{axis}
\end{tikzpicture}
\else
\begin{tikzpicture}
\begin{axis}[
hide axis, width = 0.1\textwidth, height = 0.1\textwidth, 
legend style={at={(0,0)},anchor=north, legend columns=4},
legend entries={{sta,uni,err},{sta,ada,err},{com,uni,err},{com,ada,err},{sta,uni,est},{sta,ada,est},{com,uni,est},{com,ada,est}}
]
\addlegendimage{red, mark=square, dashed, mark size=1.5pt}
\addlegendimage{red, mark=square,mark size=1.5pt}
\addlegendimage{magenta, mark=diamond, dashed}
\addlegendimage{magenta, mark=diamond}
\addlegendimage{blue, mark=x, dashed}
\addlegendimage{blue, mark=x}
\addlegendimage{cyan, mark=+, dashed}
\addlegendimage{cyan, mark=+}
\ifnum\pdfstrcmp{\fre}{critical}=0 
	\addlegendimage{empty legend}
	\addlegendimage{empty legend}
	\addlegendimage{green, mark=Mercedes star, dashed}
	\addlegendimage{green, mark=Mercedes star}
\fi
\addplot[color=white] coordinates {(0,0)};
\end{axis}
\end{tikzpicture}
\fi
\else
\begin{tikzpicture}
\begin{axis}[
hide axis, width = 0.1\textwidth, height = 0.1\textwidth, 
legend style={at={(0,0)},anchor=north, legend columns=4},
legend entries={{sta,uni,est},{sta,ada,est},{com,uni,est},{com,ada,est},{\phantom{empty}},{\phantom{empty}},{com,uni,est2},{com,ada,est2}}]
\addlegendimage{blue, mark=x, dashed}
\addlegendimage{blue, mark=x}
\addlegendimage{cyan, mark=+, dashed}
\addlegendimage{cyan, mark=+}
\addlegendimage{empty legend}
\addlegendimage{empty legend}
\addlegendimage{green, mark=Mercedes star, dashed}
\addlegendimage{green, mark=Mercedes star}
\addplot[color=white] coordinates {(0,0)};
\end{axis}
\end{tikzpicture}
\fi 
\caption{\label{fig:\geo\dir\fre}
{\ifnum\pdfstrcmp{\fre}{high}=0 \fi Double-logarithmic convergence plots over number of elements $\#\TT_\ell$ for \ifnum\pdfstrcmp{\geo}{circle}=0 circle\fi \ifnum\pdfstrcmp{\geo}{lshape}=0 L-shape\fi ~and \ifnum\dir=1 direct  \else indirect \fi integral approach. 
\ifnum\dir=1 (Approximate) errors $\norm{\phi-\phi_\ell}{V_0}$ and estimators $\eta_\ell$ \else Estimators $\eta_\ell$ \fi \ifnum\pdfstrcmp{\fre}{critical}=0(and $\eta_{2,\ell}$) \fi are displayed for BEM based on the standard integral equation~\eqref{eq:bie_first_kind} and BEM based on the combined field integral \ifnum\dir=1 equation~\eqref{eq:dirichlet_direct_mixed} \else equation~\eqref{eq:dirichlet_indirect_mixed} \fi under uniform ($\theta=1$) and adaptive ($\theta=0.9$) refinement.
Reference lines are of order \ifnum\pdfstrcmp{\geo}{lshape}=0\ifnum\dir=0 $\OO((\#\TT_\ell)^{-2/3})$, \fi\fi $\OO((\#\TT_\ell)^{-3/2})$ \ifnum\pdfstrcmp{\fre}{critical}=0and $\OO((\#\TT_\ell)^{-2})$\fi.}}
\end{figure}
}
}
}
}
}

\newcommand\plotmyratios[4]{
\foreach \geo in {#1}{
\foreach\exa in {#2}{
\foreach \dir in {#3}{
\foreach \fre in {#4}{
\ifnum\pdfstrcmp{\fre}{critical}=0
	\ifnum\pdfstrcmp{\geo}{circle}=0
		\newcommand\kapset{34.04825558,25.04825558,24.14825558,24.05825558,24.04925558,24.04835558,24.04826558,24.04825658,24.04825558}
	\fi
	\ifnum\pdfstrcmp{\geo}{square}=0
		\newcommand\kapset{32.21441469,23.21441469,22.31441469,22.22441469,22.21541469,22.21451469,22.21442469,22.21441569,22.21441469}
	\fi
	\ifnum\pdfstrcmp{\geo}{lshape}=0
		\newcommand\kapset{72.83185307,63.83185307,62.93185307,62.84185307,62.83285307,62.83195307,62.83186307,62.83185407,62.83185307}
	\fi
\else
	\newcommand\kapset{1,10,100,500,750,1000}
\fi

\begin{figure}[h!]
\centering
\setcounter{mycounter}{0}
\foreach \kap in \kapset{
\stepcounter{mycounter}
\pgfmathtruncatemacro{\result}{2-\value{mycounter}}
\begin{tikzpicture}
\begin{semilogxaxis}[
width = 0.33\textwidth, height = 0.33\textwidth, 
ymajorgrids=true, xmajorgrids=true, grid style=dashed, 
tick label style={font=\tiny},
title={\ifnum\pdfstrcmp{\fre}{critical}=0 $k=k_\ifnum\pdfstrcmp{\geo}{circle}=0 O\fi \ifnum\pdfstrcmp{\geo}{lshape}=0 L \fi \ifnum\result=-7\else+10^{\result}\fi$ \else $k=\kap$\fi}
]

\pgfplotstableread[col sep=comma]{data/geo-\geo_dir-\dir_com-0_exa-\exa_kap-\kap_p-0_the-100.csv}\uni
\addplot[red, mark=square, dashed, mark size=1.5pt] table[x=N_vec,y expr=\thisrow{est_vec}/\thisrow{err_vec}]{\uni};
\pgfplotstableread[col sep=comma]{data/geo-\geo_dir-\dir_com-0_exa-\exa_kap-\kap_p-0_the-90.csv}\ada
\addplot[blue, mark=x] table[x=N_vec,y expr=\thisrow{est_vec}/\thisrow{err_vec}]{\ada};

\pgfplotstableread[col sep=comma]{data/geo-\geo_dir-\dir_com-1_exa-\exa_kap-\kap_p-0_the-100.csv}\unicom
\addplot[magenta, mark=diamond, dashed] table[x=N_vec,y expr=\thisrow{est_vec}/\thisrow{err_vec}]{\unicom};
\pgfplotstableread[col sep=comma]{data/geo-\geo_dir-\dir_com-1_exa-\exa_kap-\kap_p-0_the-90.csv}\adacom
\addplot[cyan, mark=+] table[x=N_vec,y expr=\thisrow{est_vec}/\thisrow{err_vec}]{\adacom}; 

\pgfplotstablegetelem{0}{N_vec}\of{\adacom}
\let\dofzero\pgfplotsretval
\addplot[black,dashed,domain=\dofzero:1500] {0*x+1};

\end{semilogxaxis}
\end{tikzpicture}%
}    

\begin{tikzpicture}
\begin{axis}[
hide axis, width = 0.1\textwidth, height = 0.1\textwidth, 
legend style={at={(0,0)},anchor=north, legend columns=4},
legend entries={{sta,uni},{sta,ada},{com,uni},{com,ada}}
]
\addlegendimage{red, mark=square, dashed, mark size=1.5pt}
\addlegendimage{blue, mark=x}
\addlegendimage{magenta, mark=diamond, dashed}
\addlegendimage{cyan, mark=+}
\addplot[color=white] coordinates {(0,0)};
\end{axis}
\end{tikzpicture}
 
\caption{\label{fig:\geo\dir\fre_ratio}
{\ifnum\pdfstrcmp{\fre}{high}=0 \fi Semi-logarithmic plots of effectivity indices over number of elements $\#\TT_\ell$ for \ifnum\pdfstrcmp{\geo}{circle}=0 circle\fi \ifnum\pdfstrcmp{\geo}{lshape}=0 L-shape\fi ~and \ifnum\dir=1 direct  \else indirect \fi integral approach. 
(Approximate) ratios $\tfrac{\eta_\ell}{\norm{\phi-\phi_\ell}{V_0}}$ are displayed for BEM based on the standard integral equation~\eqref{eq:bie_first_kind} and BEM based on the combined field integral equation~\eqref{eq:dirichlet_direct_mixed} under uniform ($\theta=1$) and adaptive ($\theta=0.9$) refinement. The reference line indicates the ratio $1$.}}
\end{figure}
}
}
}
}
}

\subsection{Experiments on circle}\label{sec:circle}
The eigenvalues $k^2$ of the interior Dirichlet problem on the unit circle are given as the squares of the roots of the Bessel functions of the first kind.
On the scaled circle $$\Omega=\set{x\in\R^2}{|x|<\tfrac{1}{10}},$$ an eigenvalue $k^2$ is thus given as 100 times the square of the first root of $J_0$, i.e., $k_O:=24.04825558...$.
For $k\in \{k_O + 10^1,k_O + 10^0,\dots,k_O + 10^{-6},k_O \}$, we plot in
Figure~\ref{fig:circle0critical} and~\ref{fig:circle1critical} the (approximate)
errors $\norm{\phi-\phi_\ell}{V_0}$ and estimators $\eta_\ell$ for BEM based on
the standard integral equations~\eqref{eq:bie_first_kind_indirect} and \eqref{eq:bie_first_kind}
and BEM based on the combined field integral equations~\eqref{eq:dirichlet_indirect_mixed}
and~\eqref{eq:dirichlet_direct_mixed} under uniform ($\theta=1$) and adaptive ($\theta=0.9$)
refinement.

While for $k$ sufficiently far away from $k_O$, the estimators in Figure~\ref{fig:circle0critical} corresponding to the indirect integral approach converge for each method at optimal rate $\OO((\#\TT_\ell)^{-3/2})$, the involved multiplicative constant deteriorates quickly for standard adaptive BEM as $k$ gets closer to $k_O$. 
For $k=k_O$, no convergence at all is observed. 

In Figure~\ref{fig:circle1critical} corresponding to the direct integral approach, each method yields the optimal rate $\OO((\#\TT_\ell)^{-3/2})$ for the error estimator. 
Unsurprisingly however, although the convergence of the estimator remains unaffected as $k$ gets closer to $k_O$,  the multiplicative constant for the error deteriorates for standard BEM under both uniform and adaptive refinement. 
In particular, the effectivity index, i.e., the ratio between estimator and error, blows up. 
This is not the case for BEM based on combined field integral equations, where error and estimator remain close to each other. 

Recall that for the combined field integral equations, the estimator actually consists of two parts $\eta_{1,\ell}$ and $\eta_{2,\ell}$. 
As observed in Figure~\ref{fig:circle1critical}, the second part $\eta_{2,\ell}$ converges for the indirect and the direct approach at higher rate $\OO((\#\TT_\ell)^{-2})$. 
This higher rate can be motivated by the following arguments: 
As mentioned in the proof of Theorem~\ref{thm:estimator_mixed} and Remark~\ref{rem:reliability_mixed} for the indirect approach, the second part $\eta_{2,\ell}$ is up to potential oscillations equivalent to $\norm{-\phi_\ell + f_\ell - \Delta_\Gamma f_\ell}{H^{-1}(\Gamma)}$ and $\norm{W_ku + (K_k'+1/2) \phi_\ell - f_\ell + \Delta_\Gamma f_\ell}{H^{-1}(\Gamma)}$, respectively. 
The triangle inequality and stability of the involved operators show that both terms are bounded by $\norm{\phi - \phi_\ell}{H^{-1}(\Gamma)}  + \norm{f- f_\ell}{H^{1}(\Gamma)}$. 
From standard approximation theory and $\phi_\ell\in \PP^0(\TT_\ell)$ and $f_\ell\in\mathcal{S}^{2}(\TT_\ell)$, the rate $\eta_{2,\ell} = \OO((\#\TT_\ell)^{-2})$ might indeed be expected.

\plotmyfigures{circle}{fund}{0}{critical}
\plotmyfigures{circle}{fund}{1}{critical}


%

\subsection{Experiments on L-shape}\label{sec:lshape}
An exact eigenvalue $k^2$ of the interior Dirichlet problem on the L-shape $$\Omega={\rm conv} \{(\tfrac{1}{10},0), (0,\tfrac{1}{10}),(0,0),(0,\tfrac{-1}{10})\} \setminus {\rm conv} \{(0,0),(\tfrac{-1}{20},\tfrac{1}{20}),(\tfrac{-1}{10},0),(\tfrac{-1}{20},\tfrac{-1}{20})\}$$ is given for $k_L:=20\pi$.
For $k\in \{k_L + 10^1,k_L + 10^0,\dots,k_L + 10^{-6},k_L \}$, we plot in Figure~\ref{fig:lshape0critical} and~\ref{fig:lshape1critical} the (approximate) errors $\norm{\phi-\phi_\ell}{V_0}$ and estimators $\eta_\ell$ for BEM based on the standard integral
\eqref{eq:bie_first_kind_indirect} and \eqref{eq:bie_first_kind} and BEM based on the combined field integral equations~\eqref{eq:dirichlet_indirect_mixed} and~\eqref{eq:dirichlet_direct_mixed} under uniform ($\theta=1$) and adaptive ($\theta=0.9$) refinement.

For $k=k_L+10$, the estimators in Figure~\ref{fig:lshape0critical} corresponding to the indirect integral approach converge at the suboptimal rate $\OO((\#\TT_\ell)^{-2/3})$ under uniform refinement.
Adaptive refinement regains the optimal rate $\OO((\#\TT_\ell)^{-3/2})$. 
The same holds for BEM based on combined field integral equations as $k$ gets closer to $k_L$. 
Instead, for standard BEM, we even observe a strong growth of the estimator in the increasing preasymptotic phase.
The seemingly optimal convergence under uniform refinement for $k\neq k_L+10$ also appears to be a preasymptotic behavior as is  suggested by the plot for $k=k_L+10$ (and other values between $k_L+1$ and $k_L+10$ which are not displayed).

In Figure~\ref{fig:lshape1critical} corresponding to the direct integral approach, we observe a similar behavior as for the circle considered in Section~\ref{sec:circle}. 
We also mention that the second part $\eta_{2,\ell}$ again converges for the indirect and the direct approach at higher rate $\OO((\#\TT_\ell)^{-2})$ even for uniform refinement. 

\plotmyfigures{lshape}{fund}{0}{critical}
\plotmyfigures{lshape}{fund}{1}{critical}

\subsection{Experiments with high frequencies}

Finally, in the setting of Section~\ref{sec:circle} and~\ref{sec:lshape}, we consider high
frequencies $k$ (which do not correspond to eigenvalues of the interior Dirichlet
problem) on the circle and the L-shape. For $k\in \{1, 10, 100,$ $500, 750, 1000\}$, we plot
in Figure~\ref{fig:circle1high} and \ref{fig:lshape1high} the (approximate) errors
$\norm{\phi-\phi_\ell}{V_0}$ and estimators $\eta_\ell$ for BEM based on the direct
standard integral equation~\eqref{eq:bie_first_kind} and BEM based on the direct combined
field integral equation~\eqref{eq:dirichlet_direct_mixed} under uniform ($\theta=1$) and
adaptive ($\theta=0.9$) refinement. The estimators and the errors converge in each
case at the optimal rate $\OO((\#\TT_\ell)^{-3/2})$.
We also display in Figure~\ref{fig:circle1high_ratio} and \ref{fig:lshape1high_ratio},
the corresponding effectivity indices $\tfrac{\eta_\ell}{\norm{\phi-\phi_\ell}{V_0}}$.
As expected from Corollary~\ref{cor:asymptotic2} and
Remark~\ref{rem:dirichlet_indirect_cfie_mixed:cea} (for the indirect approach), the
effectivity indices are asymptotically robust with respect to $k$.

\plotmyfigures{circle}{fund}{1}{high}
\plotmyratios{circle}{fund}{1}{high}
\plotmyfigures{lshape}{fund}{1}{high}
\plotmyratios{lshape}{fund}{1}{high}

\section*{Acknowledgement}

GG  acknowledges funding by the Deutsche Forschungsgemeinschaft (DFG, German Research Foundation) under Germany's Excellence Strategy – EXC-2047/1 – 390685813.
TCF acknowledges funding by the ANR JCJC project APOWA (research grant ANR-23-CE40-0019-01).

\bibliographystyle{abbrv}
\bibliography{literature}

\begin{thebibliography}{10}

\bibitem{ad07}
X.~Antoine and M.~Darbas.
\newblock Generalized combined field integral equations for the iterative
  solution of the three-dimensional {H}elmholtz equation.
\newblock {\em ESAIM: Math. Model. Numer. Anal.}, 41(1):147--167, 2007.

\bibitem{affkmp17}
M.~Aurada, M.~Feischl, T.~F{\"u}hrer, M.~Karkulik, J.~M. Melenk, and
  D.~Praetorius.
\newblock Local inverse estimates for non-local boundary integral operators.
\newblock {\em Math. Comp.}, 86(308):2651--2686, 2017.

\bibitem{affkp13}
M.~Aurada, M.~Feischl, T.~F{\"u}hrer, M.~Karkulik, and D.~Praetorius.
\newblock Efficiency and optimality of some weighted-residual error estimator
  for adaptive 2{D} boundary element methods.
\newblock {\em Comput. Methods Appl. Math.}, 13(3):305--332, 2013.

\bibitem{affkp15}
M.~Aurada, M.~Feischl, T.~F{\"u}hrer, M.~Karkulik, and D.~Praetorius.
\newblock Energy norm based error estimators for adaptive {BEM} for
  hypersingular integral equations.
\newblock {\em Appl. Numer. Math.}, 95:15--35, 2015.

\bibitem{bbhp19}
A.~Bespalov, T.~Betcke, A.~Haberl, and D.~Praetorius.
\newblock Adaptive {BEM} with optimal convergence rates for the {H}elmholtz
  equation.
\newblock {\em Comput. Methods Appl. Mech. Engrg.}, 346:260--287, 2019.

\bibitem{bhp17}
A.~Bespalov, A.~Haberl, and D.~Praetorius.
\newblock Adaptive {FEM} with coarse initial mesh guarantees optimal
  convergence rates for compactly perturbed elliptic problems.
\newblock {\em Comput. Methods Appl. Mech. Engrg.}, 317:318--340, 2017.

\bibitem{bwg17}
T.~Betcke, E.~van't Wout, and P.~G{\'e}lat.
\newblock Computationally efficient boundary element methods for high-frequency
  helmholtz problems in unbounded domains.
\newblock In {\em Modern Solvers for Helmholtz Problems}, pages 215--243.
  Springer, 2017.

\bibitem{bdd04}
P.~Binev, W.~Dahmen, and R.~DeVore.
\newblock Adaptive finite element methods with convergence rates.
\newblock {\em Numer. Math.}, 97(2):219--268, 2004.

\bibitem{bcmmn16}
A.~Bonito, J.~M. Casc{\'o}n, K.~Mekchay, P.~Morin, and R.~H. Nochetto.
\newblock {High-order AFEM for the Laplace--Beltrami operator: Convergence
  rates}.
\newblock {\em Found. Comput. Math.}, 16(6):1473--1539, Dec 2016.

\bibitem{bcmn13}
A.~Bonito, J.~M. Casc\'{o}n, P.~Morin, and R.~H. Nochetto.
\newblock A{FEM} for geometric {PDE}: the {L}aplace-{B}eltrami operator.
\newblock In {\em Analysis and numerics of partial differential equations},
  volume~4 of {\em Springer INdAM Ser.}, pages 257--306. Springer, Milan, 2013.

\bibitem{bw65}
H.~Brakhage and P.~Werner.
\newblock \"{U}ber das {D}irichletsche {A}ussenraumproblem f\"{u}r die
  {H}elmholtzsche {S}chwingungsgleichung.
\newblock {\em Arch. Math.}, 16:325--329, 1965.

\bibitem{bet12}
O.~Bruno, T.~Elling, and C.~Turc.
\newblock Regularized integral equations and fast high-order solvers for
  sound-hard acoustic scattering problems.
\newblock {\em Int. J. Numer. Methods Eng.}, 91(10):1045--1072, 2012.

\bibitem{bggpv22}
A.~Buffa, G.~Gantner, C.~Giannelli, D.~Praetorius, and R.~V{\'a}zquez.
\newblock Mathematical foundations of adaptive isogeometric analysis.
\newblock {\em Arch. Comput. Methods Eng.}, 29:4479--4555, 2022.

\bibitem{bh05}
A.~Buffa and R.~Hiptmair.
\newblock Regularized combined field integral equations.
\newblock {\em Numer. Math.}, 100:1--19, 2005.

\bibitem{bs06}
A.~Buffa and S.~Sauter.
\newblock On the acoustic single layer potential: stabilization and {F}ourier
  analysis.
\newblock {\em {SIAM J. Sci. Comput.}}, 28(5):1974--1999, 2006.

\bibitem{bm71}
A.~J. Burton and G.~F. Miller.
\newblock The application of integral equation methods to the numerical
  solution of some exterior boundary-value problems.
\newblock {\em Proc. Roy. Soc. London Ser. A}, 323:201--210, 1971.

\bibitem{carstensen97}
C.~Carstensen.
\newblock An a posteriori error estimate for a first-kind integral equation.
\newblock {\em Math. Comp.}, 66(217):139--155, 1997.

\bibitem{cfpp14}
C.~Carstensen, M.~Feischl, M.~Page, and D.~Praetorius.
\newblock Axioms of adaptivity.
\newblock {\em Comput. Math. Appl.}, 67(6):1195--1253, 2014.

\bibitem{cs95}
C.~Carstensen and E.~P. Stephan.
\newblock A posteriori error estimates for boundary element methods.
\newblock {\em Math. Comp.}, 64(210):483--500, 1995.

\bibitem{cs96}
C.~Carstensen and E.~P. Stephan.
\newblock Adaptive boundary element methods for some first kind integral
  equations.
\newblock {\em SIAM J. Numer. Anal.}, 33(6):2166--2183, 1996.

\bibitem{ckns08}
J.~M. Cascon, C.~Kreuzer, R.~H. Nochetto, and K.~G. Siebert.
\newblock Quasi-optimal convergence rate for an adaptive finite element method.
\newblock {\em SIAM J. Numer. Anal.}, 46(5):2524--2550, 2008.

\bibitem{cl07}
S.~N. Chandler-Wilde and S.~Langdon.
\newblock A {G}alerkin boundary element method for high frequency scattering by
  convex polygons.
\newblock {\em SIAM J. Numer. Anal.}, 45(2):610--640, 2007.

\bibitem{dfghs04}
W.~Dahmen, B.~Faermann, I.~G. Graham, W.~Hackbusch, and S.~A. Sauter.
\newblock Inverse inequalities on non-quasi-uniform meshes and application to
  the {M}ortar element method.
\newblock {\em Math. Comp.}, 73(247):1107--1138, 2004.

\bibitem{dd07}
A.~Demlow and G.~Dziuk.
\newblock An adaptive finite element method for the {L}aplace--{B}eltrami
  operator on implicitly defined surfaces.
\newblock {\em SIAM J. Numer. Anal.}, 45(1):421--442, 2007.

\bibitem{es07}
S.~Engleder and O.~Steinbach.
\newblock Modified boundary integral formulations for the {H}elmholtz equation.
\newblock {\em Aust. J. Math. Anal. Appl.}, 331(1):396--407, 2007.

\bibitem{es08}
S.~Engleder and O.~Steinbach.
\newblock Stabilized boundary element methods for exterior {H}elmholtz
  problems.
\newblock {\em Numer. Math.}, 110:145--160, 2008.

\bibitem{faermann00}
B.~Faermann.
\newblock Localization of the {A}ronszajn-{S}lobodeckij norm and application to
  adaptive boundary elements methods. {P}art {I}. {T}he two-dimensional case.
\newblock {\em IMA J. Numer. Anal.}, 20(2):203--234, 2000.

\bibitem{faermann02}
B.~Faermann.
\newblock Localization of the {A}ronszajn-{S}lobodeckij norm and application to
  adaptive boundary element methods. {P}art {II}. {T}he three-dimensional case.
\newblock {\em Numer. Math.}, 92(3):467--499, 2002.

\bibitem{feischl22}
M.~Feischl.
\newblock Inf-sup stability implies quasi-orthogonality.
\newblock {\em Math. Comp.}, 91(337):2059--2094, 2022.

\bibitem{ffkmp14}
M.~Feischl, T.~F{\"u}hrer, M.~Karkulik, J.~M. Melenk, and D.~Praetorius.
\newblock Quasi-optimal convergence rates for adaptive boundary element methods
  with data approximation, {P}art {I}: {W}eakly-singular integral equation.
\newblock {\em Calcolo}, 51(4):531--562, 2014.

\bibitem{ffkmp15}
M.~Feischl, T.~F{\"u}hrer, M.~Karkulik, J.~M. Melenk, and D.~Praetorius.
\newblock Quasi-optimal convergence rates for adaptive boundary element methods
  with data approximation, {P}art {II}: {H}yper-singular integral equation.
\newblock {\em Electron. Trans. Numer. Anal.}, 44:153--176, 2015.

\bibitem{ffp14}
M.~Feischl, T.~F{\"u}hrer, and D.~Praetorius.
\newblock Adaptive {FEM} with optimal convergence rates for a certain class of
  nonsymmetric and possibly nonlinear problems.
\newblock {\em SIAM J. Numer. Anal.}, 52(2):601--625, 2014.

\bibitem{fkmp13}
M.~Feischl, M.~Karkulik, J.~M. Melenk, and D.~Praetorius.
\newblock Quasi-optimal convergence rate for an adaptive boundary element
  method.
\newblock {\em SIAM J. Numer. Anal.}, 51(2):1327--1348, 2013.

\bibitem{galkowski_spence_2023a}
J.~Galkowski and E.~Spence.
\newblock Does the {H}elmholtz boundary element method suffer from the
  pollution effect?
\newblock {\em SIAM Rev.}, 65(3):806--828, 2023.

\bibitem{gantner17}
G.~Gantner.
\newblock {\em Optimal adaptivity for splines in finite and boundary element
  methods}.
\newblock PhD thesis, TU Wien, 2017.

\bibitem{gp22b}
G.~Gantner and D.~Praetorius.
\newblock Adaptive {BEM} for elliptic {PDE} systems, part {I}: abstract
  framework, for weakly-singular integral equations.
\newblock {\em Appl. Anal.}, 101(6):2085--2118, 2022.

\bibitem{gp22c}
G.~Gantner and D.~Praetorius.
\newblock Adaptive {BEM} for elliptic {PDE} systems, part {II}: {I}sogeometric
  analysis with hierarchical {B}-splines for weakly-singular integral
  equations.
\newblock {\em Comput. Math. Appl.}, 117:74--96, 2022.

\bibitem{gps22a}
G.~Gantner, D.~Praetorius, and S.~Schimanko.
\newblock {IGABEM2D}.
\newblock Zenodo software 6282998, 2022.

\bibitem{gps22b}
G.~Gantner, D.~Praetorius, and S.~Schimanko.
\newblock Stable implementation of adaptive {IGABEM} in 2{D} in {MATLAB}.
\newblock {\em Comput. Methods Appl. Math.}, 22(3):563--590, 2022.

\bibitem{gantumur13}
T.~Gantumur.
\newblock Adaptive boundary element methods with convergence rates.
\newblock {\em Numer. Math.}, 124(3):471--516, 2013.

\bibitem{ghs05}
I.~G. Graham, W.~Hackbusch, and S.~A. Sauter.
\newblock Finite elements on degenerate meshes: inverse-type inequalities and
  applications.
\newblock {\em IMA J. Numer. Anal.}, 25(2):379--407, 2005.

\bibitem{hiptmair03}
R.~Hiptmair.
\newblock Coercive combined field integral equations.
\newblock {\em J. Numer. Math.}, 11(2):115--134, 2003.

\bibitem{holst01}
M.~Holst.
\newblock Adaptive numerical treatment of elliptic systems on manifolds.
\newblock {\em Adv. Comput. Math.}, 15(1):139--191, 2001.

\bibitem{kpp13}
M.~Karkulik, D.~Pavlicek, and D.~Praetorius.
\newblock On 2{D} newest vertex bisection: optimality of mesh-closure and
  {$H^1$}-stability of {$L_2$}-projection.
\newblock {\em Constr. Approx.}, 38(2):213--234, 2013.

\bibitem{leis65}
R.~Leis.
\newblock Zur {D}irichletschen {R}andwertaufgabe des {A}u{\ss}enraumes der
  {S}chwingungsgleichung.
\newblock {\em Math. Z.}, 90:205--211, 1965.

\bibitem{lohndorf_melenk_2011a}
M.~L\"ondorf and J.~Melenk.
\newblock Wavenumber-explicit {$hp$-BEM} for high frequency scattering.
\newblock {\em SIAM J. Numer. Anal.}, 49(6):2340--2363, 2011.

\bibitem{mclean00}
W.~McLean.
\newblock {\em Strongly elliptic systems and boundary integral equations}.
\newblock Cambridge University Press, Cambridge, 2000.

\bibitem{mmn11}
K.~Mekchay, P.~Morin, and R.~H. Nochetto.
\newblock A{FEM} for the {L}aplace-{B}eltrami operator on graphs: design and
  conditional contraction property.
\newblock {\em Math. Comp.}, 80(274):625--648, 2011.

\bibitem{melenk_wohlmuth_2001a}
J.~Melenk and B.~Wohlmuth.
\newblock {On residual-based a posteriori estmiation in $hp$-FEM}.
\newblock {\em Adv. Comput. Math.}, 15:311--331, 2001.

\bibitem{meury07}
P.~E. Meury.
\newblock {\em Stable finite element boundary element {G}alerkin schemes for
  acoustic and electromagnetic scattering}.
\newblock PhD thesis, ETH Zurich, 2007.

\bibitem{panich65}
O.~I. Panich.
\newblock On the question of the solvability of the exterior boundary-value
  problems for the wave equation and {M}axwell's equations.
\newblock {\em Russian Math. Surveys}, 20(A):221--226, 1965.

\bibitem{ss11}
S.~A. Sauter and C.~Schwab.
\newblock {\em Boundary element methods}.
\newblock Springer, Berlin, 2011.

\bibitem{steinbach08}
O.~Steinbach.
\newblock {\em Numerical approximation methods for elliptic boundary value
  problems}.
\newblock Springer, New York, 2008.

\bibitem{stevenson07}
R.~Stevenson.
\newblock Optimality of a standard adaptive finite element method.
\newblock {\em Found. Comput. Math.}, 7(2):245--269, 2007.

\bibitem{stevenson08}
R.~Stevenson.
\newblock The completion of locally refined simplicial partitions created by
  bisection.
\newblock {\em Math. Comp.}, 77(261):227--241, 2008.

\end{thebibliography}

\end{document}